\documentclass[onefignum,onetabnum,reqno]{siamart251216}

\usepackage{graphicx} % Required for inserting images
\usepackage{ragged2e}
\usepackage{blindtext}
\usepackage[utf8]{inputenc}
\usepackage[T1]{fontenc}
\usepackage{amsmath, amssymb, mathtools, bm, esint}
\usepackage{booktabs}
\usepackage{mathrsfs}
\usepackage{enumerate}
\usepackage{comment}
\usepackage{indentfirst}
\usepackage{empheq}
\usepackage{xcolor,colortbl}

\usepackage{cleveref}
\usepackage{algorithm}
\usepackage{algorithmic}

\usepackage{paralist} 

\usepackage{adjustbox}

\newtheorem{Theo}{Theorem}[section]
\newtheorem{Rem}{Remark}[section]
\newtheorem{Def}{Definition}[section]
\newtheorem{Lem}{Lemma}[section]
\newtheorem{Prop}{Proposition}[section]

\newtheorem{Ex}{Example}[section]
\newtheorem{Assum}{Assumption}[section]

\numberwithin{equation}{section}
\numberwithin{figure}{section}
\numberwithin{table}{section}
\numberwithin{algorithm}{section}

\renewcommand{\emptyset}{\varnothing}

\providecommand{\bx}{\boldsymbol{x}}
\providecommand{\bxi}{\boldsymbol{\xi}}
\providecommand{\bzeta}{\boldsymbol{\zeta}}
\newcommand{\dx}{\,\mathrm{d}\bx}
\newcommand{\ds}{\,\mathrm{d}s}

\providecommand{\U}{\mathbb{U}}
\providecommand{\V}{\mathbb{V}}
\providecommand{\X}{\mathbb{X}}

\providecommand{\M}{\mathbb{M}}

\providecommand{\T}{\mathcal{T}}
\providecommand{\E}{\mathcal{E}}
\renewcommand{\P}{\mathbb{P}}
\newcommand{\I}{\mathcal{I}}
\newcommand{\C}{\mathscr{C}}
\newcommand{\F}{\mathcal{F}}
\newcommand{\osc}{\mathrm{osc}}

\newcommand{\DProd}[1]{\left\langle#1\right\rangle}
\newcommand{\norm}[1]{\left\| #1 \right\|}
\usepackage{setspace}
\allowdisplaybreaks

\headers{MinRes for nonlinear PDEs in Banach spaces}{I. Muga, J. Perera, S. Rojas, and R. Ruiz-Baier}

\title{A Residual Minimization approach for Nonlinear Partial Differential Equations set in Banach spaces\thanks{\textbf{Updated:} \today. \funding{This work has been partially supported by ANID through FONDECYT projects 1240643 (SR) and 1230091 (IM), by the ANID National Doctoral Scholarship 21250504 (JP), by the National Center for Artificial Intelligence CENIA FB210017, Basal ANID, by the Australian Research Council through the \textsc{Future Fellowship} Grant FT220100496, and by the Center of Advanced Study (CAS) at the Norwegian Academy of Science and Letters under the program \textsc{Mathematical Challenges in Brain Mechanics}.}}}

\author{Ignacio Muga\thanks{Instituto de Matem\'aticas, Pontificia Universidad Cat\'olica de Valpara\'{\i}so, Valpara\'iso, Chile (\email{ignacio.muga@pucv.cl}).} 
\and 
Jorge Perera\thanks{Corresponding author: Instituto de Matem\'aticas, Pontificia Universidad Cat\'olica de Valpara\'{\i}so, Valpara\'iso, Chile (\email{jorgeperera1597@gmail.com}).} 
\and Sergio Rojas\thanks{School of Mathematics, Monash University, 9 Rainforest Walk, 3800 Melbourne VIC, Australia (\email{sergio.rojas@monash.edu}).} 
\and Ricardo Ruiz-Baier\thanks{School of Mathematics, Monash University, 9 Rainforest Walk, Melbourne VIC 3800, Australia, and Universidad Adventista de Chile, Casilla 7-D Chill\'an, Chile (\email{ricardo.ruizbaier@monash.edu}).}}

\date{\today}

\begin{document}

\maketitle

\begin{abstract}
In this work, we propose and analyze a residual-minimization strategy for the numerical solution of nonlinear PDEs posed in Banach spaces. Given a finite-dimensional trial space and a suitably enriched discrete test space (of higher dimension than the trial space), we approximate the solution by minimizing the variational residual in a discrete dual norm. This minimization is equivalent to a nonlinear saddle-point formulation for the discrete solution in the trial space together with a residual representative in the test space. The latter provides a natural a posteriori error estimator, enabling automatic mesh adaptivity. To solve the resulting nonlinear saddle-point problem, we propose a Newton iteration whose linearized saddle-point system is symmetric, thereby guaranteeing solvability at each step. We take the $p$--Laplacian as a model problem and support the theoretical developments with representative numerical experiments,  using standard $H^1$--conforming piecewise linear functions for the trial space, and lowest-order Crouzeix--Raviart functions for the test space.
\end{abstract}
\begin{keywords}
Residual minimization, nonlinear PDEs, Banach spaces, $p$-Laplacian, adaptivity. 
\end{keywords}
\begin{MSCcodes} 65N30, 65N12, 65N15.\end{MSCcodes}

\section{Introduction}
\subsection{Scope} 
Partial Differential Equations (PDEs) are commonly expressed in the abstract form:
\begin{equation}\label{abstractPDE}
    \begin{cases}
        \text{Find } u \in \U:\\
        A(u) = F\ \ \text{in } \V^*,
    \end{cases}
\end{equation}
where {the mapping $A: \U \to \V^*$ represents the underlying PDE,} $\U$ and $\V$ are infinite-dimensional Banach spaces and $F \in \V^*$ is a given functional.
Although standard variational formulations form the basis of finite element discretizations, residual minimization (MinRes) methods have emerged as a robust alternative framework.
The core idea of MinRes is to reformulate the PDE as an optimization problem by minimizing the norm of the residual $F - A(u)$ in a suitable dual space.

Different MinRes strategies based on finite elements have been proposed to numerically approximate the solution(s) of \eqref{abstractPDE} in specific contexts.
For instance, when the right-hand side functional $F$ is in a Lebesgue space $L^2$, the least-squares method \cite{Bochev2009Least} can be applied, which reformulates PDE problems as the minimization of a functional that measures the residual error of the equation in the $L^2$ norm.
In the context of Hilbert spaces, the Discontinuous Petrov--Galerkin (DPG) method (see \cite{demkowicz2025discontinuous}, and references therein for a recent overview) has established itself as a powerful methodology by minimizing the residual in dual norms, leading to optimal stability properties. More recently, these MinRes principles have been extended to the training of neural networks, such as in the Robust Variational Physics-Informed Neural Networks (RVPINNs) framework (see \cite{Rojas2024Robust,fuhrer2025posteriori}), which minimizes the discrete dual norm of the residual to guarantee stable and robust error estimation.

Other methods that minimize the residual in dual norms, or tackle negative and fractional Sobolev norms in the context of Hilbert spaces, can be found in \cite{Cier2021Automatically, Cier2021ANonlinear, Monsuur2024Minimal}.
{Furthermore, the DPG framework has been extended to analyze nonlinear problems in Hilbert spaces, such as quasilinear elliptic PDEs \cite{Carstensen2018Nonlinear}, strongly monotone operators \cite{Cantin2018A}, and more recently, nonlinear evolution equations through multistage time-marching schemes \cite{Munoz2023Multistage}.}
%\cred{[There are plenty of works in the DPG community for nonlinear PDEs analysed in Hilbert spaces, look for nonlinear DPG in Google Scholar]}

Nevertheless, many physical phenomena and mathematical models are naturally governed by non-quadratic energies and, consequently, are best described in Banach spaces rather than Hilbert spaces. A classical example that arises in the modeling of non-Newtonian fluids, plasticity, glaciology, and nonlinear diffusion processes is the $p$-Laplacian problem:
Given $p \in (1, \infty)$ and a source term $f \in (W_0^{1,p}(\Omega))^*$, find $u \in W_0^{1,p}(\Omega)$ such that:
\begin{equation}\label{pLaplacian_problem}
    \begin{cases}
        -\Delta_p u = f & \text{in } \Omega, \\
        u = 0 & \text{on } \partial \Omega,
    \end{cases}
\end{equation}
where $\Delta_p u := \text{div}(|\nabla u|^{p-2} \nabla u)$ and $\Omega \subset \mathbb{R}^d$ is a bounded open set with a Lipschitz boundary $\partial \Omega$.
For such problems, where the natural function spaces are Sobolev spaces $W^{1,p}(\Omega)$ with $p \neq 2$, restricting the numerical analysis to Hilbert structures is theoretically insufficient.

An approach generalizing residual minimization methods to Banach spaces was introduced in \cite{Muga2020Discretization}. It is worth emphasizing that this framework, as well as the approaches in \cite{Cier2021Automatically, Cier2021ANonlinear}, provides a posteriori error estimators that naturally drive adaptive mesh refinement.
However, while the work in \cite{Muga2020Discretization} successfully established the theory for continuous, bounded-below linear operators, extending it to nonlinear operators remains challenging.

Recent advancements have begun to address the highly nonlinear optimization hurdles that arise when minimizing residuals in Banach spaces, proposing strategies such as variable exponents \cite{Li2022An} or regularized Kačanov iterations \cite{Storn2024Solving} for linear PDEs. {Furthermore, as highlighted in \cite{Balci2023Relaxed}, the inherent structural differences of the $p$-Laplacian necessitate analyzing the degenerate ($p \geq 2$) and singular ($1 < p < 2$) regimes separately to derive accurate stability and convergence bounds.}

In this work, we aim to further bridge this gap by developing residual minimization methods for nonlinear PDEs that provide an a posteriori error estimator to automatically drive adaptivity, particularly when $\V$ and its dual space $\V^*$ are strictly convex reflexive Banach spaces.

\subsection{Outline}

The remainder of this work is organized as follows.
    In \Cref{sec:Preliminaries}, we present the necessary preliminaries, including definitions of functional spaces, properties of duality maps in strictly convex Banach spaces, and key results on best approximation.
    \Cref{sec:MinResDualNorms} is devoted to the continuous residual minimization method, establishing its well-posedness, equivalent characterizations, and a continuous a posteriori error estimator.
    In \Cref{sec:discreteMinRes}, we analyze the residual minimization method in discrete dual norms. We discuss its stability, provide a convergence analysis, and derive a discrete a posteriori error estimator.
    Finally, in \Cref{sec:NumericalImplementation}, we detail the numerical implementation for the $p$-Laplacian problem, including the finite element spaces, the iterative algorithm, and numerical experiments validating our theoretical findings.

%%%%%%%%%%%%%%%%%%%%%%%%%%%%%%%%%%%%%%%%%%%%%%%%%%%%%%%%%%%%%%%%%%%%%%%%%%%%%%%%%%%%%%
%%%%%%%%%%%%%%%%%%%%%%%%%%%%%%%%%%%%%%%%%%%%%%%%%%%%%%%%%%%%%%%%%%%%%%%%%%%%%%%%%%%%%%
%%%%%%%%%%%%%%%%%%%%%%%%%%%%%%%%%%%%%%%%%%%%%%%%%%%%%%%%%%%%%%%%%%%%%%%%%%%%%%%%%%%%%%
%%%%%%%%%%%%%%%%%%%%%%%%%%%%%%%%%%%%%%%%%%%%%%%%%%%%%%%%%%%%%%%%%%%%%%%%%%%%%%%%%%%%%%
%%%%%%%%%%%%%%%%%%%%%%%%%%%%%%%%%%%%%%%%%%%%%%%%%%%%%%%%%%%%%%%%%%%%%%%%%%%%%%%%%%%%%%
%%%%%%%%%%%%%%%%%%%%%%%%%%%%%%%%%%%%%%%%%%%%%%%%%%%%%%%%%%%%%%%%%%%%%%%%%%%%%%%%%%%%%%
%%%%%%%%%%%%%%%%%%%%%%%%%%%%%%%%%%%%%%%%%%%%%%%%%%%%%%%%%%%%%%%%%%%%%%%%%%%%%%%%%%%%%%
%%%%%%%%%%%%%%%%%%%%%%%%%%%%%%%%%%%%%%%%%%%%%%%%%%%%%%%%%%%%%%%%%%%%%%%%%%%%%%%%%%%%%%
%%%%%%%%%%%%%%%%%%%%%%%%%%%%%%%%%%%%%%%%%%%%%%%%%%%%%%%%%%%%%%%%%%%%%%%%%%%%%%%%%%%%%%
%%%%%%%%%%%%%%%%%%%%%%%%%%%%%%%%%%%%%%%%%%%%%%%%%%%%%%%%%%%%%%%%%%%%%%%%%%%%%%%%%%%%%%
%%%%%%%%%%%%%%%%%%%%%%%%%%%%%%%%%%%%%%%%%%%%%%%%%%%%%%%%%%%%%%%%%%%%%%%%%%%%%%%%%%%%%%
%%%%%%%%%%%%%%%%%%%%%%%%%%%%%%%%%%%%%%%%%%%%%%%%%%%%%%%%%%%%%%%%%%%%%%%%%%%%%%%%%%%%%%
%%%%%%%%%%%%%%%%%%%%%%%%%%%%%%%%%%%%%%%%%%%%%%%%%%%%%%%%%%%%%%%%%%%%%%%%%%%%%%%%%%%%%%
%%%%%%%%%%%%%%%%%%%%%%%%%%%%%%%%%%%%%%%%%%%%%%%%%%%%%%%%%%%%%%%%%%%%%%%%%%%%%%%%%%%%%%

\section{Preliminaries on functional setting}\label{sec:Preliminaries}

In this section, we present definitions of the function spaces we will consider in our model problem. Furthermore, we briefly explore essential results from the theory of residual minimization in Banach spaces.

%%%%%%%%%%%%%%%%%%%%%%%%%%%%%%%%%%%%%%%%%%%%%%%%%%%%%%%%%%%%%%%%%%%%%%%%%%%%%%%%%%%%%%
%%%%%%%%%%%%%%%%%%%%%%%%%%%%%%%%%%%%%%%%%%%%%%%%%%%%%%%%%%%%%%%%%%%%%%%%%%%%%%%%%%%%%%

    Let $(\X, \norm{\ }_{\X})$ be a normed space. We will denote by $\X^*$ the dual space of $\X$, and will refer to the action of $F \in \X^*$ over elements $x \in \X$, as a duality pairing between $\X^*$ and $\X$. This is,
    \begin{equation}\label{DualityPairing}
        \DProd{F , x}_{\X^*,\X} := F(x).
    \end{equation}
    Additionally, we denote the dual space  $\X^*$ norm as
    \begin{equation}\label{Dual_norm}
        \norm{\, \bullet\, }_{\X^*} := \sup_{x \in \X} \frac{\DProd{\ \bullet\ , x}_{\X^*,\X} }{\norm{x}_{\X}}.
    \end{equation}
    Let $\Omega \subset \mathbb{R}^d$ be a bounded Lipschitz domain, where $d \in \mathbb{N}$ denotes its spatial dimension. For a given $p \geq 1$, consider the Sobolev spaces
    \[
        W^{1,p}(\Omega) := \left\{ v \in L^p(\Omega): \frac{\partial v}{\partial x_i} \in L^p(\Omega), \forall i \in \{1, \cdots, d\} \right\},
    \]
    equipped with the norm
    \[
        \norm{\, \bullet \, }_{W^{1,p}(\Omega)} := \left(\norm{\, \bullet\, }^p_{L^p(\Omega)} + \sum_{i=1}^d \norm{\frac{\partial (\,\bullet\,)}{\partial x_i}}^p_{L^p(\Omega)} \right)^{1/p},
    \]
    where $L^p(\Omega)$ stands for the standard Lebesgue space. In addition, consider the subspace $W_0^{1,p}(\Omega) \subset W^{1,p}(\Omega)$ being the closure of $\C_0^{\infty}(\Omega)$ (the space of smooth functions with compact support on $\Omega$) in the normed space $W^{1,p}(\Omega)$. This is,
    \[
        W_0^{1,p}(\Omega) := \overline{\C_0^{\infty}(\Omega)}^{ \norm{\ }_{W^{1,p}(\Omega)} }.
    \]
    We recall that, as a consequence of Poincaré’s inequality, $\norm{\ }_{W^{1,p}(\Omega)}$ and
    \begin{equation}\label{SemiNormW1p}
        |\,\bullet \,|_{W^{1,p}(\Omega)} := \left( \int_{\Omega} |\nabla (\, \bullet \,)|^p \right)^{1/p}
    \end{equation}
    are equivalent norms in the space $W_0^{1,p}(\Omega)$. 
    %Finally, let $p^*$ be the conjugate exponent of $p$ (i.e., $p^*p = p^* + p$). The dual space of the Sobolev space $W_0^{1,p}(\Omega)$ will be denoted by $W^{-1,p^*}(\Omega)$.

    Since these Sobolev spaces, and their dual spaces, are strictly convex and reflexive for $1 < p < \infty$, they possess favorable geometric properties. A key tool that exploits these properties is the duality map. Although, in general, the duality map on a Banach space $\X$ is defined as a multivalued mapping from $\X$ to $2^{\X^*}$, when $\X^*$ is strictly convex it becomes a single-valued map (see \cite[Prop. 12.3]{Deimling2013Nonlinear}). This property allows us to use the following definition of the duality map, which we will consider in this paper.
    
    \begin{Def}\label{DmapDef}
        Let $\X$ be a Banach space such that $\X^*$ is strictly convex. For $p > 1$, the \textbf{duality map} $J_{p,\X} : \X \to \X^*$ is the unique operator that satisfies:
    \begin{enumerate}[i.]
        \item $\DProd{ J_{p,\X} (x), x}_{\X^*,\X} = \norm{J_{p,\X}(x)}_{\X^*} \norm{x}_{\X}$.
        \item $\norm{J_{p,\X} (x)}_{\X^*} = \norm{x}^{p-1}_{\X}$.
    \end{enumerate}
    \end{Def}
    The existence of $J_{p,\X}$ is guaranteed by the Hahn--Banach extension theorem, and the uniqueness is due to the strict convexity of $\X^*$. The following result considers a distinctive feature of the duality map.
    \begin{Prop}\label{DMapFromTheNorm}
        Let $\X$ be a strictly convex Banach space and consider $p > 1$. Let $\phi : \X \to \mathbb{R}$ be defined as $\phi(\, \bullet \,) := \frac{1}{p} \norm{\, \bullet \,}_{\X}^p$. Then, $\phi$ is a G\^ateaux differentiable functional for all $x \in \X$, and its derivative at $x \in \X$ satisfies the following identity:
\begin{equation}\label{DmapAsGradient}
            \nabla \phi(x) = J_{p,\X}(x).
        \end{equation}
    \end{Prop}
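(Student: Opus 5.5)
The plan is to view $\phi$ as a continuous convex functional and identify its subdifferential with the set of functionals satisfying (i)--(ii) of Definition~\ref{DmapDef}. Uniqueness of such a functional then gives Gâteaux differentiability. Single-valuedness of the duality set uses strict convexity of $\X^*$, which is the standing hypothesis under which $J_{p,\X}$ is defined in Definition~\ref{DmapDef}. I will therefore use strict convexity of $\X^*$ in that step (and flag it: strict convexity of $\X$ alone does not give smoothness of the norm unless reflexivity and duality are also brought in).

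\emph{Step 1 (the case $x=0$).} Since $|\phi(th)|/t = t^{p-1}\norm{h}_{\X}^p/p \to 0$ as $t\to0^+$, $\phi$ is Gâteaux differentiable at $0$ with derivative $0$. This matches $J_{p,\X}(0)=0$, which is forced by (ii). From now on let $x\neq 0$.

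\emph{Step 2 (duality functionals are subgradients).} The map $t\mapsto t^p/p$ is convex and nondecreasing on $[0,\infty)$, and the norm is convex, so $\phi$ is convex and continuous. Let $f$ satisfy (i)--(ii). For any $y\in\X$, Young's inequality with exponents $p'=p/(p-1)$ and $p$ gives
\[
\DProd{f,y}_{\X^*,\X}\le \norm{x}_{\X}^{p-1}\norm{y}_{\X}\le \tfrac{1}{p'}\norm{x}_{\X}^{p}+\tfrac1p\norm{y}_{\X}^p .
\]
Subtracting $\DProd{f,x}_{\X^*,\X}=\norm{x}_{\X}^p$ yields $\DProd{f,y-x}\le \phi(y)-\phi(x)$. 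Hence $f\in\partial\phi(x)$.

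\emph{Step 3 (subgradients are duality functionals).} Let $g\in\partial\phi(x)$.
\begin{itemize}
\item Testing with $y=(1\pm s)x$ and letting $s\to0^+$ gives $\DProd{g,x}=\norm{x}_{\X}^p$.
\item Testing with $y=x+th$ for $t>0$, dividing by $t$ and letting $t\to 0^+$, gives $\DProd{g,h}\le \norm{x}_{\X}^{p-1}\norm{h}_{\X}$, using the one-sided derivative of the norm bounded by $\norm{h}_{\X}$. Thus $\norm{g}_{\X^*}\le\norm{x}_{\X}^{p-1}$.
\item Combined with $\DProd{g,x}=\norm{x}_{\X}^p$, this forces $\norm{g}_{\X^*}=\norm{x}_{\X}^{p-1}$ and (i).
\end{itemize}
So $\partial\phi(x)$ is exactly the duality set at $x$.

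\emph{Step 4 (conclusion).} For a continuous convex functional, the one-sided directional derivative $\phi'(x;h)=\lim_{t\to0^+}(\phi(x+th)-\phi(x))/t$ exists, is sublinear in $h$, and satisfies the max formula $\phi'(x;h)=\max_{g\in\partial\phi(x)}\DProd{g,h}$. The latter follows from Hahn--Banach applied to the sublinear map $h\mapsto\phi'(x;h)$. By strict convexity of $\X^*$, the duality set contains only $J_{p,\X}(x)$, since two distinct elements would have a midpoint of the same norm. Therefore $\phi'(x;h)=\DProd{J_{p,\X}(x),h}$ is linear and bounded in $h$, which is Gâteaux differentiability together with \eqref{DmapAsGradient}.

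The main obstacle is Step 4. The max formula must be justified, and uniqueness must be drawn from the geometry of $\X^*$ rather than of $\X$. If the intended hypothesis is truly only strict convexity of $\X$, I would fall back on the paper's setting, where spaces are reflexive and both $\X$ and $\X^*$ are strictly convex, and use the strict convexity of $\X^*$ assumed in Definition~\ref{DmapDef}.
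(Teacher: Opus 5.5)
Your proof is correct and follows essentially the route of the reference the paper cites (Asplund's identification of the duality set at $x$ with $\partial\phi(x)$, here via Young's inequality, after which single-valuedness of $\partial\phi(x)$ and the max formula give G\^ateaux differentiability with $\nabla\phi(x)=J_{p,\X}(x)$). Your use of strict convexity of $\X^*$ is not a fallback but genuinely needed, and it is the standing hypothesis of Definition~\ref{DmapDef}: strict convexity of $\X$ alone fails even in reflexive spaces, e.g.\ the norm on $\mathbb{R}^2$ whose unit ball is the intersection of two overlapping congruent discs is strictly convex, yet $\phi$ is not G\^ateaux differentiable at the two corner points of that ball.
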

    \Cref{DMapFromTheNorm} is very useful for determining the duality map $J_{p,\X}$ of a Banach space $\X$ from its norm. {For a detailed proof, we refer the reader to \cite[Th. 4.4]{Cioranescu1990Geometry}, which establishes this relation for generalized duality maps by considering the weight function $\varphi(t) = t^{p-1}$.}
    
   The next result provides a particular case for which the bijectivity of the duality map is ensured. {For a proof, see, for example,  \cite[Prop. 4.7]{Cioranescu1990Geometry}.}
    \begin{Prop}\label{DmapBijective}
        Let $\X$ and $\X^*$ be strictly convex and reflexive Banach spaces, and let $J_{p,\X} : \X \to \X^*$ be the duality map on $\X$ for $p > 1$. Then the following statements hold:
        \begin{enumerate}[(a)]
            \item The duality map $J_{p,\X}$ is bijective. %and $J_{p,\X^*} : \X^* \to \X^{**}$ are bijective.

            \item If $\I_{\X}: \X \to \X^{**}$ denotes the canonical injection and $p^* = p/(p-1)$, then
\begin{equation}\label{DualityMapDualSpace}
                    J_{p^*,\X^*} = \I_{\X} \circ J_{p,\X}^{-1}.
                \end{equation}
        \end{enumerate}
    \end{Prop}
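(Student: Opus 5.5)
We prove both parts together by constructing an explicit inverse of $J_{p,\X}$ from the duality map of $\X^*$. Since $\X$ is reflexive with $\X^*$ strictly convex and $\X^{**}\cong\X$ strictly convex, Definition~\ref{DmapDef} applies to $\X$ (exponent $p$) and to $\X^*$ (exponent $p^*$). Thus $J_{p,\X}:\X\to\X^*$ and $J_{p^*,\X^*}:\X^*\to\X^{**}$ are well defined and single valued. Because $\I_{\X}$ is an isometric isomorphism, we may set $T:=\I_{\X}^{-1}\circ J_{p^*,\X^*}:\X^*\to\X$. We aim to show that $T\circ J_{p,\X}=\mathrm{id}_{\X}$ and $J_{p,\X}\circ T=\mathrm{id}_{\X^*}$. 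This gives (a), with $J_{p,\X}^{-1}=T$, and composing with $\I_{\X}$ gives \eqref{DualityMapDualSpace}.

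\emph{Step 1: $T\circ J_{p,\X}=\mathrm{id}_{\X}$.} Fix $x\in\X$ and put $F:=J_{p,\X}(x)$, so $\norm{F}_{\X^*}=\norm{x}_{\X}^{p-1}$. We check that $\I_{\X}(x)\in\X^{**}$ satisfies the two defining properties of $J_{p^*,\X^*}(F)$.
\begin{itemize}
\item For the pairing, $\DProd{\I_{\X}x,F}=\DProd{F,x}=\norm{F}_{\X^*}\norm{x}_{\X}=\norm{\I_{\X}x}_{\X^{**}}\norm{F}_{\X^*}$.
\item For the norm, $(p-1)(p^*-1)=1$ gives $\norm{F}_{\X^*}^{p^*-1}=\norm{x}_{\X}^{(p-1)(p^*-1)}=\norm{x}_{\X}=\norm{\I_{\X}x}_{\X^{**}}$.
\end{itemize}
Uniqueness in Definition~\ref{DmapDef}, which uses the strict convexity of $\X^{**}$, then yields $J_{p^*,\X^*}(F)=\I_{\X}x$, i.e.\ $T(J_{p,\X}x)=x$. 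Hence $J_{p,\X}$ is injective.

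\emph{Step 2: $J_{p,\X}\circ T=\mathrm{id}_{\X^*}$.} This is the symmetric argument. Fix $F\in\X^*$ and let $x:=T(F)$, so $\I_{\X}x=J_{p^*,\X^*}(F)$.
\begin{itemize}
\item For the pairing, $\DProd{F,x}=\DProd{\I_{\X}x,F}=\norm{x}_{\X}\norm{F}_{\X^*}$.
\item For the norm, $\norm{x}_{\X}=\norm{F}_{\X^*}^{p^*-1}$, so $\norm{x}_{\X}^{p-1}=\norm{F}_{\X^*}$.
\end{itemize}
Thus $F$ satisfies the defining properties of $J_{p,\X}(x)$, and uniqueness, via the strict convexity of $\X^*$, gives $J_{p,\X}(x)=F$. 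Hence $J_{p,\X}$ is surjective.

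The main obstacle is Step 2, namely surjectivity. Surjectivity genuinely requires reflexivity, which enters through the fact that $J_{p^*,\X^*}(F)$ lies in the range of $\I_{\X}$. Once this identification is available, the rest reduces to the exponent identity $(p-1)(p^*-1)=1$ and the uniqueness in Definition~\ref{DmapDef}. The degenerate case $x=0$ or $F=0$ is trivial, since both maps send $0$ to $0$.
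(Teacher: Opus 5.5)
Your proof is correct. You check that $\I_{\X}x$ satisfies the two defining properties of $J_{p^*,\X^*}(J_{p,\X}x)$, and conversely that $F$ satisfies those of $J_{p,\X}\bigl(\I_{\X}^{-1}J_{p^*,\X^*}F\bigr)$. Together with $(p-1)(p^*-1)=1$ and pointwise uniqueness (from strict convexity of $\X^{**}\cong\X$ and of $\X^*$), this gives both (a) and (b) at once.

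The paper does not prove this statement; it only refers to Prop.~4.7 of Cioranescu's monograph, which covers general weights $\varphi$. There, the inverse of the duality map with weight $\varphi$ is the duality map of $\X^*$ with weight $\varphi^{-1}$. Your argument is a self-contained direct proof of that fact for $\varphi(t)=t^{p-1}$ and $\varphi^{-1}(s)=s^{p^*-1}$. It also makes visible that Step~1, namely $J_{p^*,\X^*}\circ J_{p,\X}=\I_{\X}$, needs no reflexivity; reflexivity enters only through the surjectivity of $\I_{\X}$ in Step~2.
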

    
A last ingredient we require for the subsequent analysis is the duality map of a linear subspace. 

    \begin{Prop}\label{PropDmapSubspace}
        Let $\X$ be a Banach space such that $\X^*$ is strictly convex, and let $J_{p,\X} : \X \to \X^*$ be the duality map on $\X$. Let $\M \subset \X$ be a linear subspace of $\X$ and $p > 1$. Then, the duality map $J_{p,\M} : \M \to \M^*$ verify the following identity:
        \begin{align}\label{DmapLinearSubspace}
            I_{\M}^* \circ J_{p,\X} \circ I_{\M} = J_{p,\M},
        \end{align}
        where $I_{\M} : \M \to \X$ is the natural injection.
    \end{Prop}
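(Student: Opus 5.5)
The plan is to show that the functional $T(m) := I_{\M}^*\bigl(J_{p,\X}(I_{\M} m)\bigr)$ satisfies the two defining properties of \Cref{DmapDef} on $\M$. I will then argue that any functional in $\M^*$ with those two properties must coincide with $T(m)$. The second step matters because the hypotheses only give strict convexity of $\X^*$, not of $\M^*$, so uniqueness in $\M^*$ has to be proved rather than quoted. Fix $m \in \M$; the case $m=0$ is trivial since both sides vanish. Note that $T(m)$ is simply the restriction of $J_{p,\X}(m)$ to $\M$, that is, $\DProd{T(m),n}_{\M^*,\M} = \DProd{J_{p,\X}(m),n}_{\X^*,\X}$ for all $n\in\M$.

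\emph{Norm identity (property ii).} Restricting the supremum in \eqref{Dual_norm} to the subset $\M$ gives
\[
\norm{T(m)}_{\M^*} \le \norm{J_{p,\X}(m)}_{\X^*} = \norm{m}_{\X}^{p-1}.
\]
For the reverse inequality, test with $n=m\in\M$ and use property i of $J_{p,\X}$:
\[
\frac{\DProd{J_{p,\X}(m),m}}{\norm{m}_{\X}} = \norm{m}_{\X}^{p-1}.
\]
Hence $\norm{T(m)}_{\M^*} = \norm{m}_{\M}^{p-1}$, since $\M$ carries the norm of $\X$.

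\emph{Pairing identity (property i).} By property i of $J_{p,\X}$ and the equality of norms just shown,
\[
\DProd{T(m),m}_{\M^*,\M} = \DProd{J_{p,\X}(m),m} = \norm{m}_{\X}^{p} = \norm{T(m)}_{\M^*}\norm{m}_{\M}.
\]

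\emph{Uniqueness.} This is the step I expect to be the main point. Suppose $g\in\M^*$ satisfies $\DProd{g,m}=\norm{g}_{\M^*}\norm{m}$ and $\norm{g}_{\M^*}=\norm{m}^{p-1}$. By the Hahn--Banach theorem, extend $g$ to some $G\in\X^*$ with $\norm{G}_{\X^*}=\norm{g}_{\M^*}$. Then $G$ satisfies properties i and ii of \Cref{DmapDef} at $m$ in the space $\X$. By uniqueness of $J_{p,\X}$, which follows from strict convexity of $\X^*$, we get $G=J_{p,\X}(m)$. Therefore $g = G|_{\M} = T(m)$.

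This shows the duality map on $\M$ is well defined and single-valued even without assuming $\M^*$ strictly convex. It also shows that it equals $I_{\M}^*\circ J_{p,\X}\circ I_{\M}$, which proves \eqref{DmapLinearSubspace}.
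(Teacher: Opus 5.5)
Your proof is correct and is essentially the argument the paper has in mind when it says that the case $p>1$ ``follows similarly'' from \cite[Lem.~2.3]{Muga2020Discretization}: show that the restriction $I_{\M}^*J_{p,\X}(I_{\M}m)$ satisfies the two defining properties of \Cref{DmapDef} in $\M^*$, then appeal to uniqueness. Your explicit Hahn--Banach step adds something the paper leaves implicit: the statement assumes strict convexity of $\X^*$ but not of $\M^*$, so that step is exactly what shows $J_{p,\M}$ is well defined and single-valued in the first place.
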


    \begin{proof}
        The proof for $p=2$ can be found in \cite[Lem. 2.3]{Muga2020Discretization}. For $p>1$, it follows similarly.
    \end{proof}

%%%%%%%%%%%%%%%%%%%%%%%%%%%%%%%%%%%%%%%%%%%%%%%%%%%%%%%%%%%%%%%%%%%%%%%%%%%%%%%%%%%%%%
%%%%%%%%%%%%%%%%%%%%%%%%%%%%%%%%%%%%%%%%%%%%%%%%%%%%%%%%%%%%%%%%%%%%%%%%%%%%%%%%%%%%%%

%\subsection{Best approximation in Banach spaces}

    We now present a proposition establishing conditions for the existence and uniqueness of the best approximation from a Banach space to a nonempty, closed, and convex subset, along with its corresponding a priori bounds.
    
    \begin{Prop}[Best approximation and a priori bounds]\label{BestApproximation}\label{APrioriBoundsBestApproximation}
        Let $\X$ be a reflexive Banach space and let $\M \subset \X$ be a nonempty closed convex subset. Then, for every $x \in \X$, there exists $\hat{x} \in \M$ such that
        \[
            \norm{x - \hat{x}}_{\X} = \inf_{y \in \M} \norm{x - y}_{\X}.
        \]
        Furthermore, if $\X$ is strictly convex, then $\hat{x}$ is unique. {Additionally, if $0_{\X} \in \M$, the best approximation $\hat{x}$ satisfies the a priori bound:
        \[
            \norm{\hat{x}}_{\X} \leq 2 \norm{x}_{\X}.
        \]
        Moreover, if $\M$ is a closed linear subspace, this bound can be sharpened to:
        \[
            \norm{\hat{x}}_{\X} \leq C_{\mathtt{BM}}(\X) \norm{x}_{\X},
        \]
        where $C_{\mathtt{BM}}(\X) \in [1,2]$ is the Banach--Mazur constant of $\X$.}
    \end{Prop}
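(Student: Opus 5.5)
The plan has four parts, taken in order: existence by the direct method, uniqueness from strict convexity, the crude bound via the triangle inequality, and the linear-subspace bound by citing the Banach--Mazur constant.

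\emph{Existence.} Fix $x\in\X$ and set $d:=\inf_{y\in\M}\norm{x-y}_{\X}$. Choose a minimizing sequence $(y_n)\subset\M$ with $\norm{x-y_n}_{\X}\to d$. This sequence is bounded, since $\norm{y_n}_{\X}\le \norm{x}_{\X}+\norm{x-y_n}_{\X}$. Because $\X$ is reflexive, the Eberlein--\v{S}mulian theorem gives a subsequence converging weakly to some $\hat x\in\X$. The set $\M$ is closed and convex, so by Mazur's lemma it is weakly closed, and therefore $\hat x\in\M$. The norm is weakly lower semicontinuous, which yields $\norm{x-\hat x}_{\X}\le\liminf\norm{x-y_n}_{\X}=d$. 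Hence $\hat x$ is a minimizer.

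\emph{Uniqueness.} Suppose $\X$ is strictly convex and $\hat x_1\neq\hat x_2$ are both minimizers. If $d=0$, then $\hat x_1=x=\hat x_2$, a contradiction. So assume $d>0$. The midpoint $\tfrac12(\hat x_1+\hat x_2)$ lies in $\M$ by convexity. The vectors $(x-\hat x_i)/d$ are distinct and have unit norm, so strict convexity gives
\[
\norm{x-\tfrac12(\hat x_1+\hat x_2)}_{\X}=\tfrac12\norm{(x-\hat x_1)+(x-\hat x_2)}_{\X}<d.
\]
This contradicts the minimality of $d$.

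\emph{Crude bound.} If $0_{\X}\in\M$, then $0_{\X}$ is an admissible competitor, so $\norm{x-\hat x}_{\X}\le\norm{x}_{\X}$. The triangle inequality then gives $\norm{\hat x}_{\X}\le\norm{x}_{\X}+\norm{x-\hat x}_{\X}\le2\norm{x}_{\X}$. The same argument applies to any selection of best approximation, so uniqueness is not needed here.

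\emph{Sharpened bound for subspaces.} When $\M$ is a closed linear subspace, this step rests on an external result rather than an argument I would rebuild. I would invoke the known result on the norm of the (possibly nonlinear) metric projection onto subspaces, namely Stern's estimate as used in \cite{Muga2020Discretization} for MinRes in Banach spaces. It states that $\norm{\hat x}_{\X}\le C_{\mathtt{BM}}(\X)\norm{x}_{\X}$, where $C_{\mathtt{BM}}(\X)$ is the Banach--Mazur constant, defined as the supremum of these ratios over all subspaces. The inclusion $C_{\mathtt{BM}}(\X)\in[1,2]$ then follows from two facts. The upper bound $2$ is the estimate of the previous paragraph. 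The lower bound $1$ comes from taking $x\in\M$, since then $\hat x=x$.

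The main obstacle is this last point. Any quantitative statement beyond ``$\le 2$'' relies on geometric constants of $\X$, so I would cite the literature here rather than reprove it. The remaining steps are standard reflexivity and convexity arguments.
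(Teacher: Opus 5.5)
Your existence, uniqueness and crude-bound arguments are correct. The paper proves none of this: it cites Ciarlet for existence and uniqueness, \cite{Muga2020Discretization} (Prop.~3.5) for the bounds, and Stern for the constant. So your direct-method argument is a fine self-contained substitute for the first three parts.

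The problem is the sharpened bound. You define $C_{\mathtt{BM}}(\X)$ as the supremum of the ratios $\norm{\hat x}_{\X}/\norm{x}_{\X}$ over all subspaces, and then get $C_{\mathtt{BM}}(\X)\in[1,2]$ from your crude bound and from $x\in\M$. With that definition the inequality $\norm{\hat x}_{\X}\le C_{\mathtt{BM}}(\X)\norm{x}_{\X}$ is true by definition and adds nothing to the crude bound.

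That is not the constant in the statement. In Stern's Definition~2, which the paper refers to \cite{Stern2015Banach},
\[
C_{\mathtt{BM}}(\X)=\sup\bigl\{d_{\mathrm{BM}}(Z,\ell_2^2)^2 : Z\subset\X,\ \dim Z=2\bigr\}.
\]
This is an intrinsic geometric quantity (it equals $1$ for Hilbert spaces), and the claim is that it controls the metric projection.

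Stern's theorem also does not directly state a bound for best approximations. It concerns \emph{linear} idempotents: $\norm{I-P}\le C_{\mathtt{BM}}\norm{P}$ for $P\neq 0,I$. Transferring it to the nonlinear metric projection needs an extra argument, which your proposal does not supply.

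The missing step is short.
\begin{itemize}
\item If $x\in\M$ or $\hat x=0$ there is nothing to prove, so assume neither. Then $\hat x$ and $x-\hat x$ are linearly independent and span a two-dimensional space $Z$.
\item Let $P$ be the projection on $Z$ onto $\mathrm{span}\{\hat x\}$ along $\mathrm{span}\{x-\hat x\}$, so $Px=\hat x$.
\item Take $z=a\hat x+b(x-\hat x)$ with $b\neq0$. Since $(1-a/b)\hat x\in\M$ and $\hat x$ is a best approximation, $\norm{z}_{\X}=|b|\,\norm{x-(1-a/b)\hat x}_{\X}\ge|b|\,\norm{x-\hat x}_{\X}=\norm{(I-P)z}_{\X}$. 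Hence $\norm{I-P}\le 1$.
\item Apply Stern's estimate to the nontrivial idempotent $I-P$ on $Z$. This gives $\norm{P}\le d_{\mathrm{BM}}(Z,\ell_2^2)^2\,\norm{I-P}\le C_{\mathtt{BM}}(\X)$, i.e.\ $\norm{\hat x}_{\X}\le C_{\mathtt{BM}}(\X)\norm{x}_{\X}$.
\end{itemize}
Finally, $C_{\mathtt{BM}}(\X)\in[1,2]$ does not come from your crude bound. It follows from $d_{\mathrm{BM}}\ge 1$ and John's theorem, $d_{\mathrm{BM}}(Z,\ell_2^2)\le\sqrt{2}$ whenever $\dim Z=2$.
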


    \begin{Rem}\label{Banach_Mazur_constant}
        For the precise statement regarding the conditions for the existence and uniqueness of the best approximation, we refer   to \cite[Problem 5.14-4]{Ciarlet2013Linear}.
        The derivation of the standard a priori bounds can be found in \cite[Prop. 3.5]{Muga2020Discretization}. It is worth noting that standard bounds in general Banach spaces can be pessimistic. To obtain the sharpened estimate involving the Banach--Mazur constant $C_{\mathtt{BM}}(\X)$, it is necessary to exploit specific geometric properties of $\X$, particularly its proximity to a Euclidean space. For a formal definition of this constant and further geometric insights, see \cite[Def. 2]{Stern2015Banach}.
    \end{Rem}

\section{Residual minimization in dual norms}\label{sec:MinResDualNorms}

    Let us consider a finite-dimensional subspace $\U_h \subset \U$. The residual minimization problem applied to problem \eqref{abstractPDE} is as follows: given a $F \in \V^*$ and $p > 1$, find $u_h \in \U_h$ such that
    \begin{align}\label{MinResProblem}
        u_h = \arg\min_{w_h \in \U_h} \frac{1}{p^*} \norm{F - A(w_h)}^{p^*}_{\V^*},
    \end{align}
    where $p^*$ is such that $p^*p = p^* + p$. When analyzing problem \eqref{MinResProblem}, we assume the existence and uniqueness of a solution to the abstract problem \eqref{abstractPDE}.

%%%%%%%%%%%%%%%%%%%%%%%%%%%%%%%%%%%%%%%%%%%%%%%%%%%%%%%%%%%%%%%%%%%%%%%%%%%%%%%%%%%%%%
%%%%%%%%%%%%%%%%%%%%%%%%%%%%%%%%%%%%%%%%%%%%%%%%%%%%%%%%%%%%%%%%%%%%%%%%%%%%%%%%%%%%%%
    For the problem \eqref{MinResProblem} to be well-posed, the following assumptions are established.
    \begin{Assum}\label{WellPosedness}
        In problem \eqref{MinResProblem}, we assume the following statements:
        \begin{enumerate}[1.]
            \item \textbf{Existence:} $\V^*$ is reflexive and $A(\U_h)$ is a nonempty closed convex subset of $\V^*$. \label{WellPosedness1}
            \item \textbf{Uniqueness:} $\V^*$ is strictly convex and $A : \U \to \V^*$ is injective. \label{WellPosedness2}
            \item \textbf{Stability:} There exists $\alpha > 0$ such that $\norm{w}_{\U} \lesssim \norm{A(w)}_{\V^*}^{\alpha}$ for all $w \in \U$. \label{WellPosednessStability}
        \end{enumerate}
    \end{Assum}
    
    Indeed, assuming the above statements, from \Cref{BestApproximation} it follows that there exists a unique $\hat{F}_h \in A(\U_h)$ such that
    \begin{align}\label{BestApproximationResidual}
        \norm{F - \hat{F}_h}_{\V^*} = \min_{G_h \in A(\U_h)} \norm{F - G_h}_{\V^*}.
    \end{align}
    In addition, since $A : \U \to \V^*$ is injective, then there exists a unique $u_h \in \U_h$ such that $A(u_h) = \hat{F}_h$. Thus, the existence and uniqueness of a solution to problem \eqref{MinResProblem} is guaranteed. Hypothesis \eqref{WellPosednessStability} is assumed to ensure the stability of the method, since \Cref{APrioriBoundsBestApproximation} shows that
    \begin{align*}
        \norm{u_h}_{\U} \leq \norm{A(u_h)}_{\V^*}^{\alpha} \leq 2^{\alpha} \norm{F}_{\V^*}^{\alpha}.
    \end{align*}

\begin{comment} 
    An example that satisfies the above assumptions is presented below.
    
    \begin{Ex}\label{WellPosednessExample}
        Let us consider $\U = \V = W_0^{1,p}(\Omega)$ with $1 < p < \infty$, and let $A := \U \to \V^*$ be the $p$-Laplacian operator defined as
        \begin{align*}
            \DProd{A(u) , v}_{\V^*,\V} := \int_{\Omega} |\nabla u|^{p-2} \nabla u \cdot \nabla v\dx, \quad \forall u, v \in W_0^{1,p}(\Omega).
        \end{align*}
        It is well known that $W_0^{1,p}(\Omega)$ satisfies statement \eqref{WellPosedness1}. Furthermore, it can be proven that $A$ coincides with the duality map $J_{p,\V}$, which is bijective. Thus, statement \eqref{WellPosedness2} is satisfied. Moreover, note that $J_{p,\U_h} = A\bigr|_{\U_h}$ (see \cite[Lemma 2.3]{Muga2020Discretization}). Therefore, from \Cref{DmapBijective}, it follows that $J_{p,\U_h} = A\bigr|_{\U_h}$ is bijective, and $A(\U_h) = (\U_h)^*$ satisfies statement \eqref{WellPosedness3}. Finally, from \Cref{DmapDef}, we have that
        \begin{align*}
            \norm{w}_{\U} = \norm{A(w)}_{\V^*}^{1/(p-1)}, \quad \forall w \in \U.
        \end{align*}
        Thus, the statement \eqref{WellPosednessStability} is also satisfied.
    \end{Ex}
\end{comment}

\subsection{Useful characterizations}

    Herein, we provide a characterization of the solution obtained by the MinRes method, which is a central result of this work.

    \begin{Theo}[Problem equivalence]\label{MinResCharacterization}
        Let $A : \U \to \V^*$ be a Fréchet differentiable operator where $\V$ and $\V^*$ are strictly convex and reflexive Banach spaces. Let $\U_h$ be a finite-dimensional subspace of $\U$. Given $F \in \V^*$, the statements listed below are equivalent:
        \begin{enumerate}
            \item $u_h$ is a solution to the minimization problem \eqref{MinResProblem}. \label{MinResCharacterization1}
            \item For all $p>1$, $u_h \in \U_h$ is a solution of the problem:
                \begin{align}\label{PetrovGalerkinMixedFormulation}
                    \DProd{\nabla A(u_h)w_h , J_{p,\V}^{-1}(F - A(u_h))}_{\V^{*},\V} = 0, \quad \forall w_h \in \U_h.
                \end{align} \label{MinResCharacterization2}
            \item For all $p>1$, there exists a residual representative $r \in \V$, such that $(r , u_h) \in \V \times \U_h$ solves the semi-infinite mixed formulation:
                \begin{align}\label{MinResMixedFormulation}
                    \left\{
                    \begin{aligned}
                        &\DProd{J_{p,\V}(r) , v}_{\V} + \DProd{A(u_h) , v}_{\V^*,\V} &&= \DProd{F , v}_{\V^*,\V}, && \quad \forall v \in \V, \\
                        &\DProd{ \nabla A(u_h)w_h , r}_{\V^*,\V} &&= 0, && \quad \forall w_h \in \U_h.
                    \end{aligned}
                    \right.
                \end{align}\label{MinResCharacterization3}
        \end{enumerate}
    \end{Theo}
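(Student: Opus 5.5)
The plan is to prove the cycle (1)$\Rightarrow$(2)$\Rightarrow$(3)$\Rightarrow$(1), or more precisely (1)$\Leftrightarrow$(2) and (2)$\Leftrightarrow$(3). Everything rests on the first-order optimality condition for the functional
\[
\Phi(w_h):=\tfrac{1}{p^*}\norm{F-A(w_h)}_{\V^*}^{p^*},\qquad w_h\in\U_h .
\]
Since $\V^*$ is strictly convex and reflexive, \Cref{DMapFromTheNorm} applied to $\X=\V^*$ with exponent $p^*$ shows that $\psi:=\tfrac1{p^*}\norm{\cdot}_{\V^*}^{p^*}$ is G\^ateaux differentiable with $\nabla\psi=J_{p^*,\V^*}$. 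By \Cref{DmapBijective}(b), $J_{p^*,\V^*}=\I_{\V}\circ J_{p,\V}^{-1}$. The map $A$ is Fréchet differentiable, so the chain rule (G\^ateaux outer, Fréchet inner, along a line) gives, for $w_h\in\U_h$,
\[
\frac{d}{dt}\Phi(u_h+tw_h)\Big|_{t=0}=-\DProd{\nabla A(u_h)w_h\,,\,J_{p,\V}^{-1}(F-A(u_h))}_{\V^*,\V}.
\]
Hence a minimizer $u_h$ makes this directional derivative vanish for every $w_h$, which is exactly \eqref{PetrovGalerkinMixedFormulation}. This proves (1)$\Rightarrow$(2).

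For (2)$\Leftrightarrow$(3), set $r:=J_{p,\V}^{-1}(F-A(u_h))\in\V$; this is well defined because $J_{p,\V}$ is bijective by \Cref{DmapBijective}(a). Then $J_{p,\V}(r)=F-A(u_h)$ in $\V^*$, which is the first equation of \eqref{MinResMixedFormulation} tested against all $v\in\V$. The second equation of \eqref{MinResMixedFormulation} is literally \eqref{PetrovGalerkinMixedFormulation}. Conversely, if $(r,u_h)$ solves \eqref{MinResMixedFormulation}, the first equation forces $J_{p,\V}(r)=F-A(u_h)$, so $r=J_{p,\V}^{-1}(F-A(u_h))$ by injectivity, and the second equation then yields \eqref{PetrovGalerkinMixedFormulation}. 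None of this depends on the value of $p>1$. Indeed, $J_{p,\V}^{-1}(G)$ for different $p$ differ only by the positive scalar factor $\norm{G}^{p^*-1}$-type rescalings of the same norming direction, so the vanishing condition holds for one $p$ if and only if it holds for all $p$. I would state this explicitly to justify the phrase ``for all $p>1$''.

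The main obstacle is (2)$\Rightarrow$(1): a stationary point of $\Phi$ need not be a global minimizer, because $A$ is nonlinear and $\Phi$ is not obviously convex on $\U_h$. I would first try to use \Cref{WellPosedness} and work in the image set $A(\U_h)$, which is closed and convex, instead of in $\U_h$. By convexity of $\psi$ together with the gradient identity, for any $G_h\in A(\U_h)$ we have
\[
\psi(F-G_h)\ge\psi(F-A(u_h))+\DProd{A(u_h)-G_h\,,\,J_{p,\V}^{-1}(F-A(u_h))}.
\]
So it suffices to show that the last pairing is nonnegative. This is the variational-inequality characterization of the best approximation $\hat F_h$ onto a convex set (\Cref{BestApproximation}). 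The gap is to connect this with \eqref{PetrovGalerkinMixedFormulation}, which only says that $J_{p,\V}^{-1}(F-A(u_h))$ annihilates the tangent directions $\nabla A(u_h)\U_h$. I would argue that for a convex set $A(\U_h)$, every feasible direction $G_h-A(u_h)$ lies in the closure of the tangent cone of $A(\U_h)$ at $A(u_h)$, and that this cone is contained in $\nabla A(u_h)\U_h$ because $\U_h$ is finite-dimensional and $A$ is Fréchet differentiable. Consequently the pairing vanishes, $u_h$ is a global minimizer, and uniqueness from \Cref{WellPosedness} closes the cycle. If that inclusion of the tangent cone fails in general, the fallback is to state (2)$\Rightarrow$(1) under the additional hypothesis that $\nabla A(u_h)\U_h$ contains the cone generated by $A(\U_h)-A(u_h)$. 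That hypothesis holds, for instance, when $A(\U_h)$ is a linear space, as in the $p$-Laplacian case where $A(\U_h)=\U_h^*$.
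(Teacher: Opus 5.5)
Your steps (1)$\Rightarrow$(2) and (2)$\Leftrightarrow$(3) are correct and coincide with the paper's proof. You differentiate $\tfrac{1}{p^*}\norm{F-A(\cdot)}_{\V^*}^{p^*}$ along $u_h+tw_h$ using \Cref{DMapFromTheNorm} and \Cref{DmapBijective}(b), and then set $r:=J_{p,\V}^{-1}(F-A(u_h))$. The chain rule is legitimate here because that functional is locally Lipschitz. Your remark that $J_{p,\V}^{-1}(G)=\norm{G}_{\V^*}^{1/(p-1)}e_G$, with $e_G$ the unique unit norming vector of $G$, shows that (2) does not depend on $p$. This justifies the phrase ``for all $p>1$'', which the paper leaves implicit. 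The paper then reads all three equivalences directly off the gradient identity, i.e.\ it tacitly identifies critical points with minimizers. You are right that (2)$\Rightarrow$(1) is where the real difficulty lies.

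Your argument for (2)$\Rightarrow$(1) does not work, and in fact the implication is false under the stated hypotheses. Differentiability only gives $\nabla A(u_h)\U_h\subset T$, where $T$ is the tangent cone of $A(\U_h)$ at $A(u_h)$. You need the reverse inclusion, and it fails whenever $\nabla A(u_h)$ is degenerate. Take $\U=\V=\U_h=\mathbb{R}$, $A(u)=u^3$ and $F=1$. All hypotheses of the theorem and of \Cref{WellPosedness} hold: $A(\U_h)=\mathbb{R}$ is closed, convex and even a linear space, $A$ is injective, and $|w|=|A(w)|^{1/3}$. Since $\nabla A(0)=0$, $u_h=0$ satisfies (2), and (3) with $r=1$, for every $p>1$, yet the unique minimizer is $u_h=1$. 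This also refutes your claim that the fallback hypothesis holds whenever $A(\U_h)$ is linear. The same degeneracy occurs in the paper's model problem: for the $p$-Laplacian with $p>2$ one has $\nabla A(0)=0$, so $u_h=0$ is always a critical point, and in general it is not the minimizer. Note also that $A(\U_h)\subset\V^*$ is not $\U_h^*$; only $I_{\U_h}^*A(\U_h)=\U_h^*$ holds.

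What can be proved as stated is (1)$\Rightarrow$(2)$\Leftrightarrow$(3). The converse needs an extra hypothesis, and either of the following works:
\begin{itemize}
\item Your inclusion $A(\U_h)-A(u_h)\subset\nabla A(u_h)\U_h$ suffices on its own, via the convexity inequality you wrote.
\item If $\nabla A(u_h)$ is injective on $\U_h$ and \Cref{WellPosedness} holds, your tangent-cone argument goes through. Write $A(u_h)+td=A(w_t)$. The stability bound, finite dimensionality and injectivity of $A$ force $w_t\to u_h$. Non-degeneracy then gives $\norm{w_t-u_h}_{\U}\lesssim t$, and any limit point $z$ of $(w_t-u_h)/t$ satisfies $d=\nabla A(u_h)z$.
\end{itemize}
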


    \begin{proof}
        Let us consider the functional $\F : \U_h \to \mathbb{R}$ such that
        \[
            \F(w_h) := \frac{1}{p^*}\norm{F - A(w_h)}^{p^*}_{\V^*}.
        \]

        Thus, we can rewrite the problem \eqref{MinResProblem} as: find $u_h \in \U_h$ such that
        \[
            u_h = \arg \min_{w_h \in \U_h} \F(w_h).
        \]

        From \eqref{DmapAsGradient}, we arrive at the following expression:
        \begin{align*}
            \nabla \F(u_h)w_h &= \frac{\mathrm{d}}{\mathrm{d}t} \F(u_h + tw_h) \Bigr|_{t=0} = \frac{\mathrm{d}}{\mathrm{d}t} \frac{1}{p^*}\norm{F - A(u_h + tw_h)}^{p^*}_{\V^*} \Bigr|_{t=0} \\
                &= -J_{p^*,\V^*}(F - A(u_h + tw_h))\nabla A(u_h + tw_h)w_h \Bigr|_{t=0} \\
                &= -J_{p^*,\V^*}(F - A(u_h))\nabla A(u_h)w_h.
        \end{align*}

        Writing the above with a duality product and taking into account the identity \eqref{DualityMapDualSpace}, we obtain that
        \begin{align*}
            \DProd{\nabla \F(u_h) , w_h}_{\U^*,\U} &= - \DProd{J_{p^*,\V^*}(F - A(u_h)) , \nabla A(u_h)w_h}_{\V^{**},\V^*} \\
            &= - \DProd{\left(\I_{\V} \circ J_{p,\V}^{-1}\right)(F - A(u_h)) , \nabla A(u_h)w_h}_{\V^{**},\V^*} \\
            &= - \DProd{\nabla A(u_h)w_h , J_{p,\V}^{-1}(F - A(u_h))}_{\V^{*},\V}.
        \end{align*}

        So, after setting $r := J_{p,\V}^{-1}(F - A(u_h)) \in \V$, it follows that \eqref{MinResCharacterization1} $\Leftrightarrow$ \eqref{MinResCharacterization2} $\Leftrightarrow$ \eqref{MinResCharacterization3}.
    \end{proof}

    Note that, from the definition of the duality map $J_{p,\V}$ and taking into account that $J_{p,\V}(r) := F - A(u_h)$, it follows that:
    \[
        \norm{F - A(u_h)}_{\V^*} = \norm{r}_{\V}^{p-1}.
    \]
    The following subsection presents an a posteriori error estimate based on this residual representative.

%%%%%%%%%%%%%%%%%%%%%%%%%%%%%%%%%%%%%%%%%%%%%%%%%%%%%%%%%%%%%%%%%%%%%%%%%%%%%%%%%%%%%%

\subsection{A continuous a posteriori error estimator}\label{AposterioriErrorEstimator}

In this section, we derive reliability and efficiency estimates for the residual minimization method applied to the $p$-Laplacian problem for $p \in (1,\infty)$.
We begin by stating some fundamental properties of the $p$-Laplacian operator, which are proven in \cite[Sect. 5]{glowinski1975sur}.

\begin{Lem}[Properties of the $p$-Laplacian]\label{PropertiesPLaplacian}
    Let $\U = \V = W_0^{1,p}(\Omega)$ and let $A: \V \to \V^*$ be the $p$-Laplacian operator defined as
    \begin{align}\label{pLaplacianOperator}
        \DProd{A(u) , v}_{\V^*,\V} := \int_{\Omega} |\nabla u|^{p-2} \nabla u \cdot \nabla v\dx, \quad \forall u, v \in \V.
    \end{align}
    Then, for $p > 1$, there exist positive constants $C_{\mathtt{L}}, c_{\mathtt{L}}, C_{\mathtt{M}}, c_{\mathtt{M}}$ that depend on $p$, such that for all $u, w \in \V$, the operator $A$ satisfies:
    \begin{enumerate}[(i)]
        \item \textbf{Continuity:}
        \begin{align}\label{ContinuityA}
            \norm{A(u) - A(w)}_{\V^*} \leq 
            \begin{cases} 
                C_{\mathtt{L}} \left( \norm{u}_{\V} + \norm{w}_{\V} \right)^{p-2} \norm{u - w}_{\V} & \text{if } p \geq 2, \\
                c_{\mathtt{L}} \norm{u - w}_{\V}^{p-1} & \text{if } 1 < p < 2.
            \end{cases}
        \end{align}
        
        \item \textbf{Monotonicity:}
        \begin{align}\label{MonotonicityA}
            \DProd{A(u) - A(w) , u - w}_{\V^*,\V} \geq \begin{cases} 
                C_{\mathtt{M}} \norm{u - w}_{\V}^{p} & \text{if } p \geq 2, \\
        \displaystyle        {c_{\mathtt{M}} \left( \norm{u}_{\V} + \norm{w}_{\V} \right)^{p-2} \norm{u - w}_{\V}^2} & \text{if } 1 < p < 2.
            \end{cases}
        \end{align}
    \end{enumerate}
\end{Lem}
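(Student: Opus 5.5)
The plan is to reduce every estimate to a pointwise vector inequality in $\mathbb{R}^d$ and then integrate, using Hölder's inequality together with the norm $|\cdot|_{W^{1,p}(\Omega)}$ from \eqref{SemiNormW1p}. By Poincaré, this seminorm is a norm on $\V$ equivalent to $\norm{\cdot}_{\V}$; equivalence constants only change $C_{\mathtt{L}},c_{\mathtt{L}},C_{\mathtt{M}},c_{\mathtt{M}}$, so we may take it as the norm. Write $a(\xi):=|\xi|^{p-2}\xi$ for $\xi\in\mathbb{R}^d$, so that $\DProd{A(u)-A(w),v}=\int_\Omega (a(\nabla u)-a(\nabla w))\cdot\nabla v$. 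The core pointwise facts, valid for all $\xi,\eta\in\mathbb{R}^d$ with constants depending only on $p$ (and $d$), are:
\begin{align*}
|a(\xi)-a(\eta)| &\lesssim (|\xi|+|\eta|)^{p-2}|\xi-\eta|, \\
(a(\xi)-a(\eta))\cdot(\xi-\eta) &\gtrsim (|\xi|+|\eta|)^{p-2}|\xi-\eta|^2 .
\end{align*}
To prove them, write $a(\xi)-a(\eta)=\int_0^1 Da(\eta+t(\xi-\eta))(\xi-\eta)\,dt$ with $Da(z)=|z|^{p-2}\big(I+(p-2)\tfrac{z\otimes z}{|z|^2}\big)$. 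The eigenvalues of $Da(z)$ lie between $\min(1,p-1)|z|^{p-2}$ and $\max(1,p-1)|z|^{p-2}$. The remaining ingredient is the standard equivalence $\int_0^1|\eta+t(\xi-\eta)|^{p-2}dt\simeq(|\xi|+|\eta|)^{p-2}$ for $p>1$. This integral is finite even when $p<2$, since the segment meets the origin at most once and $|\cdot|^{p-2}$ is integrable along a line for $p>1$. I expect this to be the only delicate step; I would handle it by the case split $|\xi-\eta|\le\frac12\max(|\xi|,|\eta|)$ versus the complement.

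With these in hand, the four estimates follow by cases.

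\textbf{Case $p\ge 2$, continuity.} Apply the upper pointwise bound and then Hölder with exponents $\frac{p}{p-2},p,p$:
\[
\int (|\nabla u|+|\nabla w|)^{p-2}|\nabla(u-w)||\nabla v| \le \norm{|\nabla u|+|\nabla w|}_{L^p}^{p-2}|u-w|_{1,p}|v|_{1,p}.
\]
Taking the supremum over $v$ gives \eqref{ContinuityA}.

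\textbf{Case $p\ge2$, monotonicity.} Since $p-2\ge0$, we have $(|\xi|+|\eta|)^{p-2}\ge|\xi-\eta|^{p-2}$. The lower pointwise bound then gives $\gtrsim|\xi-\eta|^p$, and integrating yields $C_{\mathtt{M}}\norm{u-w}^p$.

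\textbf{Case $1<p<2$, continuity.} Now $p-2<0$, so $(|\xi|+|\eta|)^{p-2}|\xi-\eta|\le|\xi-\eta|^{p-1}$. Hence $|a(\xi)-a(\eta)|\lesssim|\xi-\eta|^{p-1}$, and Hölder with exponents $p^*,p$ gives $\int|\nabla(u-w)|^{p-1}|\nabla v|\le|u-w|_{1,p}^{p-1}|v|_{1,p}$.

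\textbf{Case $1<p<2$, monotonicity.} This needs a reverse Hölder trick. Set $g=|\nabla u|+|\nabla w|$ and $e=|\nabla(u-w)|$. Write $e^p=(g^{p-2}e^2)^{p/2}g^{(2-p)p/2}$ and apply Hölder with exponents $\frac2p,\frac{2}{2-p}$:
\[
\norm{e}_{L^p}^p\le\Big(\int g^{p-2}e^2\Big)^{p/2}\norm{g}_{L^p}^{p(2-p)/2}.
\]
Raising to the power $2/p$ gives $\int g^{p-2}e^2\ge \norm{e}_{L^p}^2\norm{g}_{L^p}^{p-2}$. Combining with the lower pointwise bound and $\norm{g}_{L^p}\le\norm{u}+\norm{w}$ (the exponent $p-2$ is negative, so the inequality goes the right way) yields \eqref{MonotonicityA}.

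One caveat: at points where $g=0$ the integrand vanishes and can be excluded, so no division issue arises.
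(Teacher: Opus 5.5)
Your proposal is correct and follows the standard argument. The paper does not prove this lemma; it defers to Glowinski--Marroco. Its own proof of \Cref{LemmaContinuityAh} handles the continuity estimates exactly as you do: a pointwise vector inequality followed by H\"older with exponents $\frac{p}{p-2},p,p$ for $p\ge 2$, and $p^*,p$ for $1<p<2$. The one cosmetic difference is that for $1<p<2$ the paper quotes $\bigl||\bxi|^{p-2}\bxi-|\bzeta|^{p-2}\bzeta\bigr|\le 2^{2-p}|\bxi-\bzeta|^{p-1}$ directly, whereas you deduce the same bound, with a non-explicit constant, from the unified weight $(|\bxi|+|\bzeta|)^{p-2}$. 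Your reverse-H\"older step for the monotonicity estimate with $1<p<2$ is also sound, including the observation that the negative exponent $p-2$ makes $\norm{g}_{L^p}\le\norm{u}_{\V}+\norm{w}_{\V}$ work in the right direction.
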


Additionally, the following lemma will be used to obtain several results presented in this paper applied to the $p$-Laplacian problem.

\begin{Lem}[Auxiliary bounds]\label{LemStabilityBounds}
    Let $\U = \V = W_0^{1,p}(\Omega)$ with $p \in (1, \infty)$ and let $u$ be the solution to the abstract problem \eqref{abstractPDE}, where $A:\U \to \V^*$ is the $p$-Laplacian operator defined in \eqref{pLaplacianOperator}. Let $u_h \in \U_h$ be the solution to the MinRes problem \eqref{MinResProblem}, and let $\hat{u}_h \in \U_h$ be the best approximation to $u$. Then, the following bounds hold:
    \begin{enumerate}[(i)]
        \item $\norm{u}_{\V} \leq \norm{F}_{\V^*}^{\frac{1}{p-1}}$. \label{LemStabilityBoundsItem1}
        \item $\norm{u_h}_{\V} \leq 2^{\frac{1}{p-1}} \norm{F}_{\V^*}^{\frac{1}{p-1}}$. \label{LemStabilityBoundsItem2}
        \item $\norm{\hat{u}_h}_{\V} \leq C_{\mathtt{BM}}(\V) \norm{F}_{\V^*}^{\frac{1}{p-1}}$, where $C_{\mathtt{BM}}(\V)$ is the Banach--Mazur constant of $\V$ {introduced in \Cref{Banach_Mazur_constant}}. \label{LemStabilityBoundsItem3}
    \end{enumerate}
\end{Lem}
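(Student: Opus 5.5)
The key observation is that the $p$-Laplacian operator $A$ in \eqref{pLaplacianOperator} coincides with the duality map $J_{p,\V}$ when $\V = W_0^{1,p}(\Omega)$ is equipped with the norm $|\,\bullet\,|_{W^{1,p}(\Omega)}$ from \eqref{SemiNormW1p}. This follows from \Cref{DMapFromTheNorm}: the Gâteaux derivative of $\frac1p\int_\Omega|\nabla v|^p$ is exactly $v\mapsto |\nabla v|^{p-2}\nabla v$. One can also check \Cref{DmapDef} directly. We have $\DProd{A(u),u}=\norm{u}_\V^p$, and Hölder's inequality gives $\norm{A(u)}_{\V^*}\le \norm{u}_\V^{p-1}$, with equality attained at $v=u$. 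Consequently $\norm{A(w)}_{\V^*}=\norm{w}_\V^{p-1}$ for every $w\in\V$, which I will use throughout.

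For item (i), since $A(u)=F$, this identity immediately gives $\norm{u}_\V^{p-1}=\norm{F}_{\V^*}$. This is in fact an equality, which is stronger than the stated inequality.

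For item (ii), \Cref{APrioriBoundsBestApproximation} applies to the set $A(\U_h)$, which contains $0=A(0)$; this is the general bound also used after \Cref{WellPosedness}. It shows that $\hat F_h=A(u_h)$ satisfies $\norm{A(u_h)}_{\V^*}\le 2\norm{F}_{\V^*}$. The set $A(\U_h)$ is closed and convex: by \Cref{PropDmapSubspace}, $A$ restricted to $\U_h$ corresponds to $J_{p,\U_h}$, which is bijective onto $\U_h^*$ by \Cref{DmapBijective}, so $A(\U_h)$ is identified with a subspace-like set. I will simply invoke \Cref{WellPosedness} here rather than reprove it. Then $\norm{u_h}_\V^{p-1}=\norm{A(u_h)}_{\V^*}\le 2\norm{F}_{\V^*}$, and taking the $(p-1)$-th root gives (ii).

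For item (iii), $\hat u_h$ is the best approximation to $u$ from the closed linear subspace $\U_h$ in the norm of $\V$. The sharpened bound in \Cref{APrioriBoundsBestApproximation} then yields $\norm{\hat u_h}_\V\le C_{\mathtt{BM}}(\V)\norm{u}_\V$. Combining this with (i) gives the claim.

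The only genuinely delicate point is the consistent choice of norm. The identification $A=J_{p,\V}$, and hence the exact relation $\norm{A(w)}_{\V^*}=\norm{w}_\V^{p-1}$, holds for the gradient seminorm and not for the full $W^{1,p}$ norm. I would therefore state explicitly that $\V$ carries $|\,\bullet\,|_{W^{1,p}(\Omega)}$, which is an equivalent norm by Poincaré's inequality. With any other equivalent norm, the bounds would only hold up to constants.
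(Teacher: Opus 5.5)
Your proof is correct and is essentially the paper's argument. The paper also rests on $\DProd{A(v),v}_{\V^*,\V}=\norm{v}_{\V}^p$, which, as you rightly note, presupposes the gradient norm \eqref{SemiNormW1p} on $\V$. It uses this only in the one-sided form $\norm{v}_{\V}^{p-1}\le\norm{A(v)}_{\V^*}$. Your H\"older step upgrades it to the identity $\norm{A(w)}_{\V^*}=\norm{w}_{\V}^{p-1}$, which makes (i) an equality; (iii) is handled identically.

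You should, however, delete your justification that $A(\U_h)$ is closed and convex, because it is wrong. \Cref{PropDmapSubspace} only says that the restrictions $I_{\U_h}^*A(w_h)$ exhaust $(\U_h)^*$. It does not say that the set $A(\U_h)\subset\V^*$ is a subspace. Indeed, since $A(tw_h)=|t|^{p-2}t\,A(w_h)$ for all real $t$, convexity would force $A(\U_h)$ to be a linear subspace, and this fails in general when $p\neq 2$.

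Fortunately no convexity is needed. Minimality of $u_h$ together with $0\in\U_h$ and $A(0)=0$ gives directly
\[
\norm{A(u_h)}_{\V^*}\le\norm{F-A(u_h)}_{\V^*}+\norm{F}_{\V^*}\le\norm{F-A(0)}_{\V^*}+\norm{F}_{\V^*}=2\norm{F}_{\V^*}.
\]
This is exactly how the paper proves (ii), and it is also all that the factor-$2$ bound in \Cref{APrioriBoundsBestApproximation} ever uses. So there is no need to lean on \Cref{WellPosedness} at this step.
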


\begin{proof}
    {Directly from the definition of the operator in \eqref{pLaplacianOperator}, we have the identity
    \begin{align}\label{Coercivity_pLaplacian}
        \DProd{A(v), v}_{\V^*,\V} = \norm{v}_{\V}^p \quad \forall v \in \V.
    \end{align}
    
    Item \eqref{LemStabilityBoundsItem1} follows from applying \eqref{Coercivity_pLaplacian} to the exact solution: $\norm{u}_{\V}^p = \DProd{A(u),u}_{\V^*,\V} \leq \norm{F}_{\V^*} \norm{u}_{\V}$.
    
    For \eqref{LemStabilityBoundsItem2}, since $0 \in \U_h$ and $A(0)=0$, the minimization property guarantees $\norm{F - A(u_h)}_{\V^*} \leq \norm{F - A(0)}_{\V^*} = \norm{F}_{\V^*}$. By the triangle inequality, $\norm{A(u_h)}_{\V^*} \leq 2 \norm{F}_{\V^*}$. Using \eqref{Coercivity_pLaplacian} for $u_h$, we have $\norm{u_h}_{\V}^p = \DProd{A(u_h),u_h}_{\V^*,\V} \leq \norm{A(u_h)}_{\V^*} \norm{u_h}_{\V}$. Dividing by $\norm{u_h}_{\V}$ gives $\norm{u_h}_{\V}^{p-1} \leq \norm{A(u_h)}_{\V^*} \leq 2 \norm{F}_{\V^*}$, which directly implies \eqref{LemStabilityBoundsItem2}.
    
    Finally, from \Cref{APrioriBoundsBestApproximation}, we know that $\norm{\hat{u}_h}_{\V} \leq C_{\mathtt{BM}}(\V) \norm{u}_{\V}$. Combining this with \eqref{LemStabilityBoundsItem1} implies \eqref{LemStabilityBoundsItem3}.}
\end{proof}

Based on these properties, we establish the following   error estimator.

\begin{Theo}[Continuous a posteriori error estimator]\label{ThmAposterioriError}
    Let $u$ and $u_h$ be the solutions of the abstract problem \eqref{abstractPDE} and the MinRes problem \eqref{MinResProblem}, respectively. Let $r \in \V$ be the residual representative defined in \Cref{MinResCharacterization}. Then, the following global error estimates hold:
    \begin{align}\label{FinalErrorEstimate}
        \begin{cases}
            C_1 \norm{r}_{\V}^{p-1} \leq \norm{u-u_h}_{\V} \leq C_2 \norm{r}_{\V} & \text{if } p \geq 2, \\
            c_1 \norm{r}_{\V} \leq \norm{u-u_h}_{\V} \leq c_2 \norm{r}_{\V}^{p-1} & \text{if } 1 < p < 2,
        \end{cases}
    \end{align}
    where
    \begin{align*}
        C_1 &= C_{\mathtt{L}}^{-1}(1 + C_{\mathtt{BM}}(\V))^{2-p} \norm{F}_{\V^*}^{-\frac{p-2}{p-1}}; & C_2 &= C_{\mathtt{M}}^{-\frac{1}{p-1}}; \\
        c_1 &= c_{\mathtt{L}}^{-\frac{1}{p-1}}; & c_2 &= c_{\mathtt{M}}^{-1} \left( 1 + 2^{\frac{1}{p-1}} \right)^{2-p} \norm{F}_{\V^*}^{\frac{2-p}{p-1}}.
    \end{align*}
\end{Theo}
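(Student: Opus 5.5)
The argument rests on one identity, after which each of the four inequalities is a single application of \Cref{PropertiesPLaplacian}. Since $u$ solves \eqref{abstractPDE}, we have $F = A(u)$. The first equation of \eqref{MinResMixedFormulation} then gives $J_{p,\V}(r) = F - A(u_h) = A(u) - A(u_h)$ in $\V^*$. By item ii of \Cref{DmapDef},
\[
\norm{A(u)-A(u_h)}_{\V^*} = \norm{r}_{\V}^{p-1},
\]
as already noted after \Cref{MinResCharacterization}. Throughout we write $e := u - u_h$.

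\textbf{Upper bounds (monotonicity).} We combine \eqref{MonotonicityA} with $\DProd{A(u)-A(u_h), e}_{\V^*,\V} \leq \norm{r}_{\V}^{p-1}\norm{e}_{\V}$.
\begin{itemize}
\item For $p\geq 2$ this gives $C_{\mathtt{M}}\norm{e}_{\V}^p \leq \norm{r}_{\V}^{p-1}\norm{e}_{\V}$. Dividing by $\norm{e}_{\V}$ (the case $e=0$ is trivial) and taking the $(p-1)$-th root yields $\norm{e}_{\V}\leq C_{\mathtt{M}}^{-1/(p-1)}\norm{r}_{\V}$, which is $C_2$.
\item For $1<p<2$ this gives $c_{\mathtt{M}}(\norm{u}_{\V}+\norm{u_h}_{\V})^{p-2}\norm{e}_{\V}^2 \leq \norm{r}_{\V}^{p-1}\norm{e}_{\V}$. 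Hence $\norm{e}_{\V}\leq c_{\mathtt{M}}^{-1}(\norm{u}_{\V}+\norm{u_h}_{\V})^{2-p}\norm{r}_{\V}^{p-1}$.
\end{itemize}
Because $2-p>0$, the factor $(\norm{u}_{\V}+\norm{u_h}_{\V})^{2-p}$ needs an upper bound. Items \eqref{LemStabilityBoundsItem1} and \eqref{LemStabilityBoundsItem2} of \Cref{LemStabilityBounds} give
\[
\norm{u}_{\V}+\norm{u_h}_{\V}\leq (1+2^{\frac{1}{p-1}})\norm{F}_{\V^*}^{\frac{1}{p-1}},
\]
which produces exactly $c_2$.

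\textbf{Lower bounds (continuity).} We apply \eqref{ContinuityA} to $\norm{r}_{\V}^{p-1} = \norm{A(u)-A(u_h)}_{\V^*}$.
\begin{itemize}
\item For $1<p<2$ this gives $\norm{r}_{\V}^{p-1}\leq c_{\mathtt{L}}\norm{e}_{\V}^{p-1}$, so $\norm{r}_{\V}\leq c_{\mathtt{L}}^{1/(p-1)}\norm{e}_{\V}$, which is $c_1$.
\item For $p\geq 2$ this gives $\norm{r}_{\V}^{p-1}\leq C_{\mathtt{L}}(\norm{u}_{\V}+\norm{u_h}_{\V})^{p-2}\norm{e}_{\V}$. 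Since now $p-2\geq 0$, we again use an upper bound on $\norm{u}_{\V}+\norm{u_h}_{\V}$ from \Cref{LemStabilityBounds}. This yields
\[
\norm{e}_{\V}\geq C_{\mathtt{L}}^{-1}K^{2-p}\norm{F}_{\V^*}^{-\frac{p-2}{p-1}}\norm{r}_{\V}^{p-1},
\]
where $K$ is the constant in the bound $\norm{u}_{\V}+\norm{u_h}_{\V}\leq K\norm{F}_{\V^*}^{1/(p-1)}$.
\end{itemize}

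\textbf{Main obstacle: the constant $C_1$.} The stated form $C_1$ corresponds to $K = 1+C_{\mathtt{BM}}(\V)$. That would require $\norm{u_h}_{\V}\leq C_{\mathtt{BM}}(\V)\norm{F}_{\V^*}^{1/(p-1)}$. But item \eqref{LemStabilityBoundsItem3} of \Cref{LemStabilityBounds} concerns the best approximation $\hat u_h$, not the MinRes solution $u_h$. What \Cref{LemStabilityBounds} actually supports for $u_h$ is item \eqref{LemStabilityBoundsItem2}, i.e. $K = 1+2^{1/(p-1)}$.

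So I would first try to justify the sharper Banach--Mazur-type bound for $\norm{u_h}_{\V}$ directly. The natural route is the best-approximation property of $A(u_h)$ in $A(\U_h)=\U_h^*$ combined with \Cref{APrioriBoundsBestApproximation}, applied in $\V^*$.

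If that fails, I would prove the estimate with $K = 1+2^{1/(p-1)}$, exactly as was done for $c_2$. The resulting $C_1$ has the same structure as the stated one, and the same $\norm{F}_{\V^*}$ scaling, but a possibly smaller numerical factor, so the stated $C_1$ is not yet established by this plan.
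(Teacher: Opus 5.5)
Your upper bounds in both regimes and your lower bound for $1<p<2$ are exactly the paper's argument. The gap is the one you flag yourself: for $p\geq 2$ your plan does not produce the stated $C_1$, and neither of your remedies closes it.

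The fallback $K=1+2^{\frac{1}{p-1}}$ gives a correct estimate. However, its constant is different and not a priori comparable with $1+C_{\mathtt{BM}}(\V)$, so it is not the statement.

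The first route fails. $A(\U_h)$ is the image of $\U_h$ under a nonlinear map, and for $p\neq 2$ it is in general not a linear subspace of $\V^*$. The identification with $\U_h^*$ holds only after restricting test functions to $\U_h$. So \Cref{APrioriBoundsBestApproximation} only gives the factor $2$, which is item (ii) of \Cref{LemStabilityBounds} again. Even if $A(\U_h)$ were a subspace, you would get $\norm{u_h}_{\V}^{p-1}\leq\norm{A(u_h)}_{\V^*}\leq C_{\mathtt{BM}}(\V^*)\norm{F}_{\V^*}$. That is a factor $C_{\mathtt{BM}}(\V^*)^{\frac{1}{p-1}}$, still not $C_{\mathtt{BM}}(\V)$.

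The missing idea is that no bound on $\norm{u_h}_{\V}$ is needed here. Use the minimality of $u_h$ to replace $u_h$ by the best approximation $\hat u_h$ of $u$ in $\U_h$ \emph{before} applying \eqref{ContinuityA}:
\[
\norm{r}_{\V}^{p-1}=\norm{F-A(u_h)}_{\V^*}\leq\norm{F-A(\hat u_h)}_{\V^*}=\norm{A(u)-A(\hat u_h)}_{\V^*}\leq C_{\mathtt{L}}\bigl(\norm{u}_{\V}+\norm{\hat u_h}_{\V}\bigr)^{p-2}\norm{u-\hat u_h}_{\V}.
\]
Items (i) and (iii) of \Cref{LemStabilityBounds} bound the weight by $(1+C_{\mathtt{BM}}(\V))^{p-2}\norm{F}_{\V^*}^{\frac{p-2}{p-1}}$. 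Because $u_h\in\U_h$, the best-approximation property gives $\norm{u-\hat u_h}_{\V}\leq\norm{u-u_h}_{\V}$. Together these yield $C_1\norm{r}_{\V}^{p-1}\leq\norm{u-u_h}_{\V}$ with exactly the stated $C_1$, which is how the paper proves this bound.
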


\begin{proof}
    For $p \geq 2$, we first prove the lower bound. Let $\hat{u}_{h}$ be the best aproximation of $u$ in $\U_h$. Using the optimality of $u_h$ in \eqref{MinResProblem} and the Lipschitz continuity property \eqref{ContinuityA}, we obtain:
    \begin{align*}
        \norm{r}_{\V}^{p-1} = \norm{F - A(u_h)}_{\V^*} \leq \norm{F - A(\hat{u}_h)}_{\V^*} \leq C_{\mathtt{L}} \left( \norm{u}_{\V} + \norm{\hat{u}_h}_{\V} \right)^{p-2} \norm{u - \hat{u}_h}_{\V}.
    \end{align*}
    Invoking \Cref{LemStabilityBounds} and noting $\norm{u - \hat{u}_h}_{\V} \leq \norm{u - u_h}_{\V}$, we get the lower bound. For the upper bound, using the strong monotonicity \eqref{MonotonicityA} {and the fact that $J_{p,\V}(r) = F - A(u_h)$}, we get:
    \begin{align*}
        C_{\mathtt{M}} \norm{u - u_h}_{\V}^{p} &\leq \DProd{A(u) - A(u_h) , u - u_h}_{\V^*,\V} \leq \norm{J_{p,\V}(r)}_{\V^*} \norm{u - u_h}_{\V} \\
        & = \norm{r}_{\V}^{p-1} \norm{u - u_h}_{\V}.
    \end{align*}
    Simplifying gives the upper bound.
    For $1 < p < 2$, the lower bound follows directly from the definition of the residual and the Hölder continuity \eqref{ContinuityA}:
    \begin{align*}
        \norm{r}_{\V}^{p-1} = \norm{A(u) - A(u_h)}_{\V^*} \leq c_{\mathtt{L}} \norm{u - u_h}_{\V}^{p-1} \implies c_{\mathtt{L}}^{-\frac{1}{p-1}} \norm{r}_{\V} \leq \norm{u - u_h}_{\V}.
    \end{align*}
    For the upper bound, using the degenerate strong monotonicity \eqref{MonotonicityA}, we get:
    \begin{align*}
        c_{\mathtt{M}} \left( \norm{u}_{\V} + \norm{u_h}_{\V} \right)^{p-2} \norm{u - u_h}_{\V}^2 &\leq \DProd{A(u) - A(u_h) , u - u_h}_{\V^*,\V} 
        \leq \norm{r}_{\V}^{p-1} \norm{u - u_h}_{\V}.
    \end{align*}
    Dividing by $\norm{u - u_h}_{\V}$ and bounding the weight term using \Cref{LemStabilityBounds} \eqref{LemStabilityBoundsItem1} and \eqref{LemStabilityBoundsItem2} yields the result.
\end{proof}

%%%%%%%%%%%%%%%%%%%%%%%%%%%%%%%%%%%%%%%%%%%%%%%%%%%%%%%%%%%%%%%%%%%%%%%%%%%%%%%%%%%%%%
%%%%%%%%%%%%%%%%%%%%%%%%%%%%%%%%%%%%%%%%%%%%%%%%%%%%%%%%%%%%%%%%%%%%%%%%%%%%%%%%%%%%%%
%%%%%%%%%%%%%%%%%%%%%%%%%%%%%%%%%%%%%%%%%%%%%%%%%%%%%%%%%%%%%%%%%%%%%%%%%%%%%%%%%%%%%%
%%%%%%%%%%%%%%%%%%%%%%%%%%%%%%%%%%%%%%%%%%%%%%%%%%%%%%%%%%%%%%%%%%%%%%%%%%%%%%%%%%%%%%
%%%%%%%%%%%%%%%%%%%%%%%%%%%%%%%%%%%%%%%%%%%%%%%%%%%%%%%%%%%%%%%%%%%%%%%%%%%%%%%%%%%%%%
%%%%%%%%%%%%%%%%%%%%%%%%%%%%%%%%%%%%%%%%%%%%%%%%%%%%%%%%%%%%%%%%%%%%%%%%%%%%%%%%%%%%%%
%%%%%%%%%%%%%%%%%%%%%%%%%%%%%%%%%%%%%%%%%%%%%%%%%%%%%%%%%%%%%%%%%%%%%%%%%%%%%%%%%%%%%%
%%%%%%%%%%%%%%%%%%%%%%%%%%%%%%%%%%%%%%%%%%%%%%%%%%%%%%%%%%%%%%%%%%%%%%%%%%%%%%%%%%%%%%
%%%%%%%%%%%%%%%%%%%%%%%%%%%%%%%%%%%%%%%%%%%%%%%%%%%%%%%%%%%%%%%%%%%%%%%%%%%%%%%%%%%%%%
%%%%%%%%%%%%%%%%%%%%%%%%%%%%%%%%%%%%%%%%%%%%%%%%%%%%%%%%%%%%%%%%%%%%%%%%%%%%%%%%%%%%%%
%%%%%%%%%%%%%%%%%%%%%%%%%%%%%%%%%%%%%%%%%%%%%%%%%%%%%%%%%%%%%%%%%%%%%%%%%%%%%%%%%%%%%%
%%%%%%%%%%%%%%%%%%%%%%%%%%%%%%%%%%%%%%%%%%%%%%%%%%%%%%%%%%%%%%%%%%%%%%%%%%%%%%%%%%%%%%
%%%%%%%%%%%%%%%%%%%%%%%%%%%%%%%%%%%%%%%%%%%%%%%%%%%%%%%%%%%%%%%%%%%%%%%%%%%%%%%%%%%%%%
%%%%%%%%%%%%%%%%%%%%%%%%%%%%%%%%%%%%%%%%%%%%%%%%%%%%%%%%%%%%%%%%%%%%%%%%%%%%%%%%%%%%%%
    
\section{Residual minimization in discrete dual norms}\label{sec:discreteMinRes}
    A complication of the formulation \eqref{MinResProblem} is that the dual norm in practice may not be computable, which happens when the space $\V$ is intractable (e.g., when $\V = W_0^{1,p}(\Omega)$). One way to solve this complication is to restrict the supremum to a discrete normed space ($\V_h , \norm{\ }_{\V_h}$) which may be non-conforming (i.e. $\V_h \not\subset \V$). For this modification to make sense with a non-conforming space $\V_h$, we will assume the following:
    \begin{Assum}\label{DiscreteAssumptions}
        Let $\V_h$ be a finite-dimensional normed space and let \mbox{$\V(h) := \V + \V_h$} be endowed with a norm denoted by $\norm{\ }_h$. We assume the following:
        \begin{enumerate}
            \item $\norm{v}_{h} \cong \norm{v}_{\V}$ and $\norm{v_h}_{h} \cong \norm{v_h}_{\V_h}$ for all $v \in \V$ and $v_h \in \V_h$.
            \item There exists 
            %an operator 
            $A_h : \U \to \V(h)^*$ such that $A_h(w)|_{\V} = A(w)$ for all $w \in \U$.
        \end{enumerate}
    \end{Assum}
    
    In this way, we can define the following discrete MinRes problem: given a $F_h \in \V(h)^*$ and $p > 1$, find $u_h \in \U_h$ such that
    \begin{align}\label{DiscreteMinResProblem}
        u_h = \arg\min_{w_h \in \U_h} \frac{1}{p^*} \norm{I_h^*(F_h - A_h(w_h))}^{p^*}_{(\V_h)^*},
    \end{align}
    where $I_h : \V_h \to \V(h)$ is the natural injection and
    \begin{align*}
        \norm{\, \bullet \,}_{(\V_h)^*} := \sup_{v_h \in \V_h} \frac{\DProd{\, \bullet \, , v_h}_{\V(h)^*,\V(h)}}{\norm{v_h}_{h}}. 
    \end{align*}
    Here the notation $(\V_h)^*$ is used to avoid confusion with a discrete subspace $\V_h^* \subset \V^*$.
    
%%%%%%%%%%%%%%%%%%%%%%%%%%%%%%%%%%%%%%%%%%%%%%%%%%%%%%%%%%%%%%%%%%%%%%%%%%%%%%%%%%%%%%
%%%%%%%%%%%%%%%%%%%%%%%%%%%%%%%%%%%%%%%%%%%%%%%%%%%%%%%%%%%%%%%%%%%%%%%%%%%%%%%%%%%%%%
    For the well-posedness of problem \eqref{DiscreteMinResProblem}, we establish the same assumptions as in the exact method (\Cref{WellPosedness}), but replacing the space $\V$ by $\V(h)$ and the operator $A$ by $A_h$. However, in this case, we need to add an additional hypothesis to ensure the solution is unique. Thus, we have the following assumptions.
    
    \begin{Assum}\label{DiscreteWellPosedness}
        In problem \eqref{DiscreteMinResProblem}, we assume the following statements:
        \begin{enumerate}
            \item $\U$ and $\U_h$ are subspaces of $\V(h)$ and $\V_h$, respectively. \label{WellPosedness5}
            \item $A_h$ is a strictly monotone operator, i.e.,
            \[
                \DProd{A_h(u) - A_h(w), u - w}_{\V(h)^*,\V(h)} > 0, \quad \forall u, w \in \U, \ u \neq w.\]
        %\end{align*}%\label{WellPosedness6}
        \end{enumerate}
    \end{Assum}
\Cref{DiscreteWellPosedness} is necessary since there might exist some $\tilde{u}_h \neq u_h$ in $\U_h$ such that
    \begin{align}\label{NonUniquenessDiscrete}
        \DProd{A_h(u_h) - A_h(\tilde{u}_h), v_h}_{(\V_h)^*,\V_h} = 0, \quad \forall v_h \in \V_h,
    \end{align}
    but $A_h(u_h) \neq A_h(\tilde{u}_h)$ in $\V(h)^*$. But, if $A_h$ is strictly monotone, setting $v_h := u_h - \tilde{u}_h$ in \eqref{NonUniquenessDiscrete}, it follows that $u_h = \tilde{u}_h$. Furthermore, note that $\U \subset \V(h)$ and $\U_h \subset \V_h$ are necessary to make sense of the duality products in \eqref{NonUniquenessDiscrete}.

\begin{comment}
    Let us consider an example that satisfies the above assumptions.

    \begin{Ex}\label{DiscreteWellPosednessExample}
        Let $\Omega$ be a polygonal domain in $\mathbb{R}^2$ and let $\T^h$ be a triangulation of $\Omega$. Let us consider $\U = \V = W_0^{1,p}(\Omega)$ with $1 < p < \infty$, $\V_h$ the Crouzeix--Raviart finite element space, and $\V(h) := \V + \V_h$ the Banach space endowed with the broken $W^{1,p}$-norm
        \begin{align}\label{BrokenWpNorm}
            \norm{\, \cdot \,}_h^p := \sum_{T \in \T^h} |\, \cdot \,|_{W^{1,p}(T)}.
        \end{align}
        Let $A_h : \U \to \V(h)^*$ be the extended $p$-Laplacian operator defined as
        \begin{align*}
            \DProd{A_h(u) , v_h}_{\V(h)^*,\V_h} := \sum_{T \in \T^h} \int_{T} |\nabla u|^{p-2} \nabla u \cdot \nabla v_h\dx, \quad \forall u \in \U, \ v_h \in \V(h).
        \end{align*}
        It can be proven that $A_h$ and $\V(h)$ satisfy all the hypotheses of \Cref{WellPosedness} with $\alpha = \frac{1}{p-1}$. Furthermore, Assumptions \ref{DiscreteAssumptions} and \ref{DiscreteWellPosedness} are trivially satisfied for any subspace $\U_h$ of $\V_h$ because $A_h$ coincides with the duality map of $(\V(h), \norm{\, \cdot \,}_h)$, which is strictly monotone if and only if $\V(h)$ is strictly convex (see \cite[Theorem 12.2]{Deimling2013Nonlinear}).
    \end{Ex}
\end{comment}

%%%%%%%%%%%%%%%%%%%%%%%%%%%%%%%%%%%%%%%%%%%%%%%%%%%%%%%%%%%%%%%%%%%%%%%%%%%%%%%%%%%%%%
%%%%%%%%%%%%%%%%%%%%%%%%%%%%%%%%%%%%%%%%%%%%%%%%%%%%%%%%%%%%%%%%%%%%%%%%%%%%%%%%%%%%%%
    
\subsection{Discrete characterizations}

    Similarly to the exact method, a result that characterizes the solution to problem \eqref{DiscreteMinResProblem} is presented in the following theorem.

    \begin{Theo}[Discrete problem equivalence]\label{DiscreteMinResCharacterization}
        Let $A_h : \U \to \V(h)^*$ be a Fréchet differentiable operator. Assume that $\V$ and $\V^*$ are strictly convex and reflexive Banach spaces, and that \Cref{DiscreteAssumptions} holds true. Let $\U_h$ be a discrete subspace of $\U$ and let $\V_h$ be a finite-dimensional normed space. Given $F_h \in \V(h)^*$, the statements listed below are equivalent:
        \begin{enumerate}[i.]
            \item $u_h$ is a solution to the minimization problem \eqref{DiscreteMinResProblem}. \label{DiscreteMinResCharacterization1}
            \item For all $p>1$, $u_h \in \U_h$ is a solution of the problem:
                \begin{align}\label{DiscretePetrovGalerkinMixedFormulation}
                    \DProd{\nabla A_h(u_h)w_h \, , \, I_h J_{p,\V_h}^{-1} \circ I^*_h(F_h - A_h(u_h))}_{\V(h)^{*},\V(h)} = 0, \quad \forall w_h \in \U_h.
                \end{align}
                \label{DiscreteMinResCharacterization2}
            \item For all $p>1$, there exists a residual representative $r_h \in \V_h$, such that $(r_h , u_h) \in \V_h \times \U_h$ solves the finite-dimensional  mixed formulation:
                %\begin{empheq}[left=\empheqlbrace]{align}
                \begin{subequations}
                \begin{alignat}{2}
                    &\DProd{J_{p,\V(h)}(r_h) , v_h}_{\V(h)^*, \V(h)} + \DProd{A_h(u_h) , v_h}_{\V(h)^*,\V(h)} &&= \DProd{F_h , v_h}_{\V(h)^*,\V(h)}, \label{DiscreteMinResMixedFormulation1} \\
                    &\DProd{ \nabla A_h(u_h)w_h , r_h}_{\V(h)^*,\V(h)} &&= 0, \label{DiscreteMinResMixedFormulation2}
                \end{alignat}
            \end{subequations}
                for all $(v_h,w_h) \in \V_h \times \U_h$. \label{DiscreteMinResCharacterization3}
        \end{enumerate}
    \end{Theo}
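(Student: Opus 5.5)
The argument follows the proof of \Cref{MinResCharacterization}, with the pair $(\V,A)$ replaced by the finite-dimensional space $\V_h$ equipped with $\norm{\ }_h$, and the residual $F - A(w_h)$ replaced by its restriction $I_h^*(F_h - A_h(w_h)) \in (\V_h)^*$.

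First I will record the geometric facts needed on $\V_h$. Since $\V_h$ is finite-dimensional, it is reflexive. Its norm is inherited from $\V(h)$, so strict convexity of $\V_h$ and of $(\V_h)^*$ should follow from the corresponding properties of $\norm{\ }_h$. This is where I would lean on the standing strict-convexity and reflexivity hypotheses, read as holding for $\V(h)$. With these in place, \Cref{DmapBijective} gives that $J_{p,\V_h}$ is bijective and that $J_{p^*,(\V_h)^*} = \I_{\V_h}\circ J_{p,\V_h}^{-1}$. In addition, \Cref{PropDmapSubspace} applied with $\M = \V_h \subset \V(h)$ gives $J_{p,\V_h} = I_h^* \circ J_{p,\V(h)} \circ I_h$.

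Next, define $\F_h(w_h) := \frac{1}{p^*}\norm{I_h^*(F_h - A_h(w_h))}^{p^*}_{(\V_h)^*}$ on $\U_h$. Because $\U_h$ is a vector space and $\F_h$ is G\^ateaux differentiable, a minimizer is characterized by the vanishing of the derivative. Strictly speaking, vanishing derivative is only a necessary condition in general. For the equivalence (ii)$\Rightarrow$(i) I would invoke the same implicit convexity and uniqueness framework used in \Cref{MinResCharacterization}, namely \Cref{DiscreteWellPosedness}, which makes the stationary point unique and hence equal to the minimizer. I flag this as the delicate point.

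To compute the derivative, I combine \Cref{DMapFromTheNorm} on $(\V_h)^*$ with the chain rule and the Fr\'echet differentiability of $A_h$, using the fact that $I_h^*$ is linear and bounded. This gives
\[
\DProd{\nabla\F_h(u_h), w_h} = -\DProd{J_{p^*,(\V_h)^*}\bigl(I_h^*(F_h-A_h(u_h))\bigr), I_h^*\nabla A_h(u_h)w_h}.
\]
By the duality-map identity above, this equals $-\DProd{\nabla A_h(u_h)w_h, I_h J_{p,\V_h}^{-1} I_h^*(F_h - A_h(u_h))}_{\V(h)^*,\V(h)}$. This establishes (i)$\Leftrightarrow$(ii).

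For (ii)$\Leftrightarrow$(iii), set $r_h := J_{p,\V_h}^{-1}\bigl(I_h^*(F_h - A_h(u_h))\bigr) \in \V_h$. Applying $J_{p,\V_h} = I_h^*J_{p,\V(h)}I_h$, the identity $J_{p,\V_h}(r_h) = I_h^*(F_h - A_h(u_h))$ tested against $v_h\in\V_h$ is exactly \eqref{DiscreteMinResMixedFormulation1}, and (ii) becomes \eqref{DiscreteMinResMixedFormulation2} with $I_h r_h$ identified with $r_h$. Conversely, any $r_h$ satisfying \eqref{DiscreteMinResMixedFormulation1} must equal this $r_h$, by injectivity of $J_{p,\V_h}$. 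Substituting it into \eqref{DiscreteMinResMixedFormulation2} then recovers (ii).

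The main obstacle is justifying the strict convexity of $\V_h$ and $(\V_h)^*$, so that the duality maps are single-valued and invertible. A secondary obstacle is the converse direction from stationarity to minimality, which I would handle through the uniqueness assumptions in the same way the paper does in the continuous case.
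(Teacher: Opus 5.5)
Your route is the paper's: compute $\nabla\F_h$ via \Cref{DMapFromTheNorm} and the chain rule, rewrite $J_{p^*,(\V_h)^*}=\I_{\V_h}\circ J_{p,\V_h}^{-1}$, and use \Cref{PropDmapSubspace} to turn the definition of $r_h$ into \eqref{DiscreteMinResMixedFormulation1}. This correctly proves (i)$\Rightarrow$(ii)$\Leftrightarrow$(iii).

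On your first obstacle: reading the convexity and reflexivity hypotheses as holding for $\V(h)$ and $\V(h)^*$ is the right repair. However, $(\V_h)^*$ is a quotient of $\V(h)^*$, not a subspace, so its strict convexity does not follow by restriction. Argue through smoothness instead: $\V(h)^*$ strictly convex implies $\V(h)$ smooth, hence $\V_h$ smooth, hence $(\V_h)^*$ strictly convex, because $\V_h$ is finite-dimensional.

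The genuine gap is your repair of (ii)$\Rightarrow$(i). Strict monotonicity of $A_h$ (\Cref{DiscreteWellPosedness}) only makes $w_h\mapsto I_h^*A_h(w_h)$ injective on $\U_h$, so it yields at most one minimizer. It gives no control over critical points of $\F_h$, which is not convex once $A_h$ is nonlinear.

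In fact the implication fails under all the stated hypotheses, \Cref{DiscreteWellPosedness} included. Take the operator \eqref{ExtendedpLaplacian} with $p>2$. H\"older gives $\norm{A_h(w)}_{\V(h)^*}\le\norm{w}_{\V}^{p-1}=o(\norm{w}_{\V})$, so $\nabla A_h(0)=0$. Hence the pair $(r_h,u_h)=\bigl(J_{p,\V_h}^{-1}(I_h^*F_h),0\bigr)$ satisfies \eqref{DiscreteMinResMixedFormulation1}--\eqref{DiscreteMinResMixedFormulation2} for every $F_h$. Now choose $F_h:=A_h(\psi)$ with $0\neq\psi\in\U_h$. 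Then $\F_h$ attains the value $0$ at $\psi$. On the other hand, $\F_h(0)>0$, because $\DProd{A_h(\psi),\psi}_{\V(h)^*,\V(h)}=\norm{\psi}_h^p$ and $\psi\in\V_h$ give $\norm{I_h^*A_h(\psi)}_{(\V_h)^*}\ge\norm{\psi}_h^{p-1}$. So $u_h=0$ is a stationary point that is not a minimizer.

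What can actually be proved is (i)$\Rightarrow$(ii)$\Leftrightarrow$(iii). The reverse implications need an extra hypothesis, for instance convexity of $\F_h$, which holds when $A_h$ is affine. The paper's proof does not close this step either; it simply asserts that $u_h$ minimizes $\F_h$ if and only if the gradient vanishes. You were right to flag the step, but the uniqueness argument you propose cannot work.
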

    \begin{proof}
        Analogously to the proof of \Cref{MinResCharacterization}, we consider the functional $\F_h : \U_h \to \mathbb{R}$ defined by
        \[
            \F_h(w_h) := \frac{1}{p^*}\norm{I^*_h\bigl(F_h - A_h(w_h)\bigr)}^{p^*}_{(\V_h)^*}.
        \]
        Using \eqref{DmapAsGradient}, we derive its gradient as follows:
        \begin{align}\label{GradientDiscreteMinRes}
            \DProd{\nabla \F_h(u_h) , w_h}_{\U_h^*,\U_h} &= - \DProd{\!\nabla A_h(u_h)w_h \, , \, I_h J_{p,\V_h}^{-1}\!\! \circ I^*_h\bigl(F_h - A_h(u_h)\bigr)}_{\V(h)^{*},\V(h)}.
        \end{align}
        Since $u_h$ minimizes $\F_h$ if and only if the gradient vanishes for all $w_h \in \U_h$, this establishes the equivalence \eqref{DiscreteMinResCharacterization1} $\Leftrightarrow$ \eqref{DiscreteMinResCharacterization2}.
        
        Finally, we define the discrete residual representative $r_h := J_{p,\V_h}^{-1} \circ I^*_h\bigl(F_h - A_h(u_h)\bigr) \in \V_h$. By applying the duality map $J_{p,\V_h}$ to both sides and invoking \Cref{PropDmapSubspace} (which states that $J_{p,\V_h} = I_h^* J_{p,\V(h)} I_h$), we obtain the following identity 
        \begin{align*}
            I_{h}^* \circ J_{p,\V(h)} (I_{h} r_h) = I^*_h\bigl(F_h - A_h(u_h)\bigr),
        \end{align*}
        in $(\V_h)^*$.    This equality is equivalent to the variational statement:
        \begin{align*}
            \DProd{J_{p,\V(h)}(r_h) , v_h}_{\V(h)^*, \V(h)} = \DProd{F_h - A_h(u_h) , v_h}_{\V(h)^*,\V(h)}, \quad \forall v_h \in \V_h,
        \end{align*}
        which corresponds exactly to \eqref{DiscreteMinResMixedFormulation1}. Combining this with the definition of $r_h$ substituted into \eqref{GradientDiscreteMinRes}, it follows that \eqref{DiscreteMinResCharacterization2} $\Leftrightarrow$ \eqref{DiscreteMinResCharacterization3}.
    \end{proof}
    
%    \begin{Rem}
%        From the definition of the duality map $J_{p,\V_h}$ and taking into account that $r_h := J_{p,\V_h}^{-1} \circ I^*_h\bigl(F_h - A_h(u_h)\bigr)$, it follows that:
%        \begin{align*}
%            \norm{r_h}_{\V}^{p-1} = \norm{I^*_h(J_{p,\V(h)}(r_h))}_{(\V_h)^*} = \norm{I^*_h\bigl(F_h - A_h(u_h)\bigr)}_{(\V_h)^*} \leq \norm{F_h - A_h(u_h)}_{\V^*}.
%        \end{align*}
%    \end{Rem}

%%%%%%%%%%%%%%%%%%%%%%%%%%%%%%%%%%%%%%%%%%%%%%%%%%%%%%%%%%%%%%%%%%%%%%%%%%%%%%%%%%%%%%
%%%%%%%%%%%%%%%%%%%%%%%%%%%%%%%%%%%%%%%%%%%%%%%%%%%%%%%%%%%%%%%%%%%%%%%%%%%%%%%%%%%%%%

\subsection{Convergence analysis}
In this section, we present a convergence analysis of the discrete MinRes method applied to the $p$-Laplacian problem with $p \in (1,\infty)$, with the configuration described below.
Let $\Omega \subset \mathbb{R}^d$ (with $d \in \{2,3\}$) be a polygonal or polyhedral domain, and let $\T^h$ be a conforming simplicial mesh of $\Omega$. Let $\V_h$ be the lowest-order Crouzeix--Raviart finite element space \cite{Crouzeix1973Conforming} that vanishes at the barycenters of the boundary faces, and let $\U_h \subset \V_h$ be the H$^1$-conforming subspace. Let $\U = \V = W_0^{1,p}(\Omega)$ and let $\V(h):= \V + \V_h$ endowed with the broken norm
\begin{equation}\label{BrokenWpNorm}
    \norm{\, \bullet \,}_h^p := \sum_{T \in \T^h} |\, \bullet \,|_{W^{1,p}(T)}.
\end{equation}
As an extension of the $p$-Laplacian operator, let us consider $A_h : \U \to \V(h)^*$ defined as
\begin{align}\label{ExtendedpLaplacian}
    \DProd{A_h(u) , v}_{\V(h)^*,\V(h)} := \sum_{T \in \T^h} \int_{T} |\nabla u|^{p-2} \nabla u \cdot \nabla v\dx, \quad \forall (u, v) \in \U \times \V(h).
\end{align}

As shown below, with this configuration, the extended operator $A_h$ satisfies a property similar to one of those established in \Cref{PropertiesPLaplacian} for the $p$-Laplacian operator. The convergence  of the proposed method follows from the properties presented below.

\begin{Lem}[Local Lipschitz continuity]\label{LemmaContinuityAh}
    Let $\U = \V = W_0^{1,p}(\Omega)$ and let $\V_h$ be the Crouzeix--Raviart finite element space that vanishes at the midpoint of the boundary.
    Let $\V(h) := \V + \V_h$ endowed with the broken norm \eqref{BrokenWpNorm} and let $A_h : \U \to \V(h)^*$ be the extended $p$-Laplacian operator defined in \eqref{ExtendedpLaplacian}. Then, the operator $A_h$ satisfies the continuity properties with the same constants $C_{\mathtt{L}}$ and $c_{\mathtt{L}}$ defined in \Cref{PropertiesPLaplacian}:
    \begin{align}\label{ContinuityAh_Eq}
        \norm{A_h(u)-A_h(w)}_{\V(h)^*} \leq 
        \begin{cases}
            C_{\mathtt{L}} \Bigl(\norm{u}_{\V} + \norm{w}_{\V} \Bigr)^{p-2} \norm{u - w }_{\V} & \text{if } p \geq 2, \\
            c_{\mathtt{L}} \norm{u - w}_{\V}^{p-1} & \text{if } 1 < p < 2.
        \end{cases}
    \end{align}
\end{Lem}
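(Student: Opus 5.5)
The plan is to reduce the estimate in \eqref{ContinuityAh_Eq} to the continuity of $A$ from \Cref{PropertiesPLaplacian}. The reduction rests on two facts: for $u,w\in\U=W_0^{1,p}(\Omega)$ the fluxes $|\nabla u|^{p-2}\nabla u$ and $|\nabla w|^{p-2}\nabla w$ are globally defined $L^{p^*}$ fields, and the broken norm \eqref{BrokenWpNorm} restricted to $\V$ is exactly the seminorm \eqref{SemiNormW1p}. Throughout, I read \eqref{BrokenWpNorm} as $\norm{v}_h^p=\sum_T|v|^p_{W^{1,p}(T)}$, so that $\norm{v}_h=\norm{\nabla_h v}_{L^p(\Omega)}$, where $\nabla_h$ is the elementwise gradient. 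For $v\in\V$ this gives $\nabla_h v=\nabla v$ and $\norm{v}_h=\norm{v}_\V$. I also take $\norm{\bullet}_\V$ to be the seminorm \eqref{SemiNormW1p}, which is the norm for which the identity $\DProd{A(v),v}=\norm{v}^p_\V$ holds.

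First, fix $v\in\V(h)$ and write $\sigma(\xi):=|\xi|^{p-2}\xi$. Then
\[
\DProd{A_h(u)-A_h(w),v}_{\V(h)^*,\V(h)}=\int_\Omega\bigl(\sigma(\nabla u)-\sigma(\nabla w)\bigr)\cdot\nabla_h v\dx .
\]
Hölder's inequality bounds this by $\norm{\sigma(\nabla u)-\sigma(\nabla w)}_{L^{p^*}(\Omega)}\norm{v}_h$. Taking the supremum over $v$ gives
\[
\norm{A_h(u)-A_h(w)}_{\V(h)^*}\le \norm{\sigma(\nabla u)-\sigma(\nabla w)}_{L^{p^*}(\Omega)}.
\]

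Second, I show that the same $L^{p^*}$ quantity equals $\norm{A(u)-A(w)}_{\V^*}$. The inequality "$\ge$" is automatic, since $\V\subset\V(h)$ isometrically and $A_h(\cdot)|_\V=A(\cdot)$; it also follows from Hölder's inequality as above. For "$\le$", I need to realise the $L^{p^*}$ norm by a test function in $\V$. The tool is a Helmholtz-type argument that uses the duality map $J_{p,\V}$. Set $G:=A(u)-A(w)\in\V^*$ and $z:=J_{p,\V}^{-1}(G)$, which exists by \Cref{DmapBijective}. Since $J_{p,\V}=A$, this means $\sigma(\nabla z)$ and $\sigma(\nabla u)-\sigma(\nabla w)$ differ by a divergence-free field $q$, i.e.\ $\int_\Omega q\cdot\nabla v\dx=0$ for all $v\in\V$. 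Testing against $v\in\V(h)$, however, involves jumps of $v\in\V_h$ across faces, and on those $q$ need not vanish.

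This is the main obstacle. Because of it I would \emph{not} route the bound through $\norm{A(u)-A(w)}_{\V^*}$. Instead I would go back to the pointwise source of the constants in \Cref{PropertiesPLaplacian}, namely the vector inequalities
\[
|\sigma(\xi)-\sigma(\eta)|\le C(|\xi|+|\eta|)^{p-2}|\xi-\eta| \quad (p\ge2),\qquad |\sigma(\xi)-\sigma(\eta)|\le c\,|\xi-\eta|^{p-1} \quad (1<p<2).
\]
Integrating these after raising to the power $p^*$ bounds the $L^{p^*}$ norm directly:
\begin{itemize}
\item For $p\ge2$, apply Hölder with exponents $\frac{p-1}{p-2}$ and $p-1$ to $(|\nabla u|+|\nabla w|)^{(p-2)p^*}|\nabla(u-w)|^{p^*}$. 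This gives $C\,\norm{|\nabla u|+|\nabla w|}_{L^p}^{p-2}\norm{\nabla(u-w)}_{L^p}$, and the triangle inequality in $L^p$ turns the first factor into $(\norm{u}_\V+\norm{w}_\V)^{p-2}$.
\item For $1<p<2$, the bound is immediate, since $(p-1)p^*=p$.
\end{itemize}

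These are exactly the estimates that produce $C_{\mathtt{L}}$ and $c_{\mathtt{L}}$ in \cite{glowinski1975sur}, whose proof bounds $\norm{A(u)-A(w)}_{\V^*}$ through this same $L^{p^*}$ norm of the flux difference. I would therefore identify $C_{\mathtt{L}}$ and $c_{\mathtt{L}}$ as those pointwise-derived constants, so the claim holds with the same constants. The only point to verify is that the cited constants are indeed obtained via the $L^{p^*}$ flux bound and not via some sharper dual-norm argument. If that could not be confirmed, the fallback is to state the lemma with the constants obtained from the pointwise inequalities.
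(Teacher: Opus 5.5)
Your final argument (the pointwise vector inequalities, followed by Hölder to bound the $L^{p^*}$ norm of the flux difference against $\norm{\nabla_h v}_{L^p(\Omega)}=\norm{v}_h$) is correct and is essentially the paper's proof; the paper applies a single three-factor Hölder inequality with exponents $\frac{p}{p-2},p,p$ where you use two steps, and it treats $p=2$ separately as trivial, which you should also do since your exponent $\frac{p-1}{p-2}$ is undefined there. The paper, like you, simply identifies $C_{\mathtt{L}}$ and $c_{\mathtt{L}}$ with the pointwise-derived constants, so your caveat about the constants applies equally to its argument, and your abandoned detour through $\norm{A(u)-A(w)}_{\V^*}$ is not needed.
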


\begin{proof}
    For $p=2$, the result is trivial. We proceed with the proof for $p>2$.
    We rely on the following vector inequality (see \cite[Lem. 2.1]{Liu1994Quasi}): For any $\bxi, \bzeta \in \mathbb{R}^d$, there exists a constant $C > 0$ such that
    \begin{align}\label{VectorIneqContinuity}
        \left| |\bxi|^{p-2} \bxi - |\bzeta|^{p-2} \bzeta \right| \leq C \bigl( |\bxi| + |\bzeta| \bigr)^{p-2} |\bxi - \bzeta|.
    \end{align}
    Let $u, w \in \V$. Using the definition of $A_h$ and the triangle inequality,  we have
    \begin{align*}
        \bigl|\DProd{A_h(u)-A_h(w),z}\bigr| 
        &= \left|\sum_{T \in \T^h} \int_{T} \Bigl( |\nabla u|^{p-2} \nabla u - |\nabla w|^{p-2} \nabla w \Bigr) \cdot \nabla z \dx \right| \\
        &\leq \sum_{T \in \T^h} \int_{T} \left| |\nabla u|^{p-2} \nabla u - |\nabla w|^{p-2} \nabla w \right| |\nabla z| \dx \qquad \forall z \in \V(h). 
    \end{align*}
    Applying the inequality \eqref{VectorIneqContinuity} pointwise inside the integrals, we obtain
    \begin{align*}
        \bigl|\DProd{A_h(u)-A_h(w),z}\bigr| 
        &\leq C \sum_{T \in \T^h} \int_{T} \bigl( |\nabla u| + |\nabla w| \bigr)^{p-2} |\nabla (u - w)| |\nabla z| \dx.
    \end{align*}
    Now, we apply the generalized Hölder inequality with exponents $r = \frac{p}{p-2}$, $s = p$, and $t = p$ (satisfying $1/r + 1/s + 1/t = 1$). Since the sum of integrals over disjoint elements is equivalent to the integral over the whole domain, we can apply Hölder directly to the summation
    \begin{align*}
        \bigl|\DProd{A_h(u)-A_h(w),z}\bigr| 
        &\leq C \left( \sum_{T \in \T^h} \norm{ (|\nabla u| + |\nabla w|)^{\frac{p}{r}} }_{L^{r}(T)}^{r} \right)^{\frac{1}{r}} 
       \\
        & \qquad \times  \left( \sum_{T \in \T^h} \norm{\nabla (u - w)}_{L^p(T)}^p \right)^{\frac{1}{p}}   \left( \sum_{T \in \T^h} \norm{\nabla z}_{L^p(T)}^p \right)^{\frac{1}{p}} \\
        &= C \left( \sum_{T \in \T^h} \int_T (|\nabla u| + |\nabla w|)^{p} \dx \right)^{\frac{p-2}{p}} \norm{u - w}_{\V} \norm{z}_{h}.
    \end{align*}
    Recognizing the first term as the global $L^p$-norm raised to the power $p-2$, we conclude
    \begin{align*}
        \bigl|\DProd{A_h(u)-A_h(w),z}\bigr| 
        &\leq C \norm{|\nabla u| + |\nabla w|}_{L^p(\Omega)}^{p-2} \norm{u - w}_{\V} \norm{z}_{h} \\
        &\leq C \bigl( \norm{u}_{\V} + \norm{w}_{\V} \bigr)^{p-2} \norm{u - w}_{\V} \norm{z}_{h}.
    \end{align*}
    The result follows by dividing by $\norm{z}_h$ and taking the supremum over $z \in \V(h) \setminus \{0\}$.
    
    For $1 < p < 2$, the proof follows an analogous structure. Instead of \eqref{VectorIneqContinuity}, we employ the vector inequality $\left| |\bxi|^{p-2} \bxi - |\bzeta|^{p-2} \bzeta \right| \leq 2^{2-p} |\bxi - \bzeta|^{p-1}$ valid for $p < 2$ (see, e.g., \cite[Chapter 12]{Lindqvist2017Notes}). Applying this inequality pointwise within the integrals over each element $T \in \T^h$, the result is directly obtained by using the standard Hölder's inequality with conjugate exponents $p/(p-1)$ and $p$ over the broken domain.

    The constants $C_{\mathtt{L}}$ and $c_{\mathtt{L}}$ arise exclusively from the algebraic vector inequalities applied pointwise within each element $T \in \T^h$. Consequently, they are independent of the global continuity of the functions across the mesh interfaces, and they coincide exactly with the constants of the continuous operator $A$ established in \Cref{PropertiesPLaplacian}.
\end{proof}

For the convergence analysis, we define the consistency error functional $\E_h(u)~\in~\V(h)^*$ as
\begin{equation}\label{ConsistencyErrorDef}
    \E_h(u) := F_h - A_h(u),
\end{equation}
for a given $F_h \in \V(h)^*$. We then have the following C\'ea-type estimate.

\begin{Theo}[C\'ea estimate]\label{ThmConvergenceEstimate}
    Let $u$ and $u_h$ be the solutions of the abstract problem \eqref{abstractPDE} and the discrete MinRes problem \eqref{DiscreteMinResProblem}, respectively, with the spaces, norm, and operator described in \Cref{LemmaContinuityAh}. %
    Let $\E_h(u) \in \V(h)^*$ be the consistency error functional defined in \eqref{ConsistencyErrorDef}. %
    Assume that the best approximation error is small enough such that $\inf_{w_h \in \U_h} \norm{u - w_h}_{\V} < 1$. %
    Then, the following error estimate holds:
    \[ 
    \norm{u - u_h}_{\V} \leq C_{\mathtt{app}} \inf_{w_h \in \U_h} \norm{u - w_h}_{\V}^{\alpha} + C_{\mathtt{cons}} \norm{\E_h(u)}_{(\V_h)^*}^{\beta},
    \]
    where the exponents are given by:
    \[
    \begin{aligned}
        \alpha &:=
        \begin{cases}
            \frac{1}{p-1}& \text{if } p \geq 2, \\%[1.5ex]
            p-1 & \text{if } 1 < p < 2.
        \end{cases}
            \qquad &\text{and} \qquad
        \beta &:=
        \begin{cases}
            \frac{1}{p-1}& \text{if } p \geq 2, \\%[1.5ex]
            1 & \text{if } 1 < p < 2.
        \end{cases}
    \end{aligned}
    \]
    The stability constants are defined as:
    \[
    \begin{aligned}
         C_{\mathtt{app}} &:=
         \begin{cases}
            1 + \left( \frac{2C_{\mathtt{L}} (1 + C_{\mathtt{BM}}(\V))^{p-2}}{C_{\mathtt{M}}} \right)^{\frac{1}{p-1}} \norm{F}_{\V^*}^{\frac{p-2}{(p-1)^2}} & \text{if } p \geq 2, \\[1ex]
          1 + c_{\mathtt{L}} C_{\mathtt{cons}} & \text{if } 1 < p < 2.
         \end{cases}
    \end{aligned}
    \]
    \[
    \begin{aligned}
         C_{\mathtt{cons}} &:=
         \begin{cases}
            \left(\frac{2}{C_{\mathtt{M}}}\right)^{\frac{1}{p-1}} & \text{if } p \geq 2, \\[1ex]
            \frac{2}{c_{\mathtt{M}}} \left( 2^{\frac{1}{p-1}} + C_{\mathtt{BM}}(\V) \right)^{2-p} \norm{F}_{\V^*}^{\frac{2-p}{p-1}} & \text{if } 1 < p < 2.
         \end{cases}
    \end{aligned}
    \]
\end{Theo}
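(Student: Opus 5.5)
Fix an arbitrary $w_h \in \U_h$ and split the error with the triangle inequality,
\[
\norm{u-u_h}_{\V} \le \norm{u-w_h}_{\V} + \norm{w_h-u_h}_{\V}.
\]
Since $\U_h \subset \V_h$ and the broken norm coincides with $\norm{\,\bullet\,}_{\V}$ on $\V$, the discrete difference $w_h-u_h$ is an admissible test function in $(\V_h)^*$. I will bound it through the monotonicity of \Cref{PropertiesPLaplacian}. Because all functions involved lie in $\V$, we have $A_h=A$ on them, so the monotonicity estimate \eqref{MonotonicityA} applies verbatim to $\DProd{A_h(w_h)-A_h(u_h), w_h-u_h}$.

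The key step is to write
\[
A_h(w_h)-A_h(u_h) = \bigl(A_h(w_h)-A_h(u)\bigr) - \E_h(u) + \bigl(F_h-A_h(u_h)\bigr).
\]
Testing with $w_h-u_h\in\V_h$, each term is controlled in $(\V_h)^*$.
\begin{itemize}
\item The first term is bounded by \Cref{LemmaContinuityAh}.
\item The second term is at most $\norm{\E_h(u)}_{(\V_h)^*}$.
\item For the third term, the minimality of $u_h$ in \eqref{DiscreteMinResProblem} gives $\norm{I_h^*(F_h-A_h(u_h))}_{(\V_h)^*}\le \norm{I_h^*(F_h-A_h(w_h))}_{(\V_h)^*}$. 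Splitting again as $\E_h(u)+A_h(u)-A_h(w_h)$ bounds this by $\norm{\E_h(u)}_{(\V_h)^*} + \norm{A_h(u)-A_h(w_h)}_{\V(h)^*}$.
\end{itemize}
Altogether,
\[
\DProd{A_h(w_h)-A_h(u_h),w_h-u_h} \le 2\bigl(\norm{A_h(u)-A_h(w_h)}_{\V(h)^*}+\norm{\E_h(u)}_{(\V_h)^*}\bigr)\norm{w_h-u_h}_{\V}.
\]
This explains the factors $2$ in the constants.

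\textbf{Case $p\ge2$.} Strong monotonicity gives $C_{\mathtt{M}}\norm{w_h-u_h}^{p-1}\le 2(\dots)$. I take $w_h=\hat u_h$, the best approximation, and bound $(\norm{u}+\norm{\hat u_h})^{p-2}$ by $(1+C_{\mathtt{BM}})^{p-2}\norm{F}^{(p-2)/(p-1)}$ via \Cref{LemStabilityBounds}. I then take the $(p-1)$-th root and use the subadditivity $(a+b)^{1/(p-1)}\le a^{1/(p-1)}+b^{1/(p-1)}$, valid since $1/(p-1)\le1$. This produces $\norm{u-\hat u_h}^{1/(p-1)}$ and $\norm{\E_h(u)}^{1/(p-1)}$ terms. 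The leftover $\norm{u-\hat u_h}$ from the triangle inequality is absorbed into $\norm{u-\hat u_h}^{1/(p-1)}$ because the infimum is $<1$ and $1/(p-1)\le 1$. This explains the "$1+$" in $C_{\mathtt{app}}$ and the stated exponents.

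\textbf{Case $1<p<2$.} Monotonicity gives $c_{\mathtt{M}}(\norm{w_h}+\norm{u_h})^{p-2}\norm{w_h-u_h}\le 2(\dots)$. Since $p-2<0$, I need an upper bound on $\norm{w_h}+\norm{u_h}$. With $w_h=\hat u_h$, \Cref{LemStabilityBounds} gives $(2^{1/(p-1)}+C_{\mathtt{BM}})\norm{F}^{1/(p-1)}$, which yields exactly $C_{\mathtt{cons}}$. Hölder continuity then gives $\norm{A_h(u)-A_h(\hat u_h)}\le c_{\mathtt{L}}\norm{u-\hat u_h}^{p-1}$, producing the term $c_{\mathtt{L}}C_{\mathtt{cons}}\norm{u-\hat u_h}^{p-1}$. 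The remaining $\norm{u-\hat u_h}$ is again dominated by $\norm{u-\hat u_h}^{p-1}$ because the distance is $<1$ and $p-1<1$. Finally, $\inf$ and best approximation coincide.

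\textbf{Main obstacle.} I expect the delicate point to be the bound on $\norm{u_h}$ in the discrete setting. \Cref{LemStabilityBounds}(ii) was proved for the continuous MinRes solution, so I must check that its argument transfers to \eqref{DiscreteMinResProblem}. It does: test $0\in\U_h$, use $\norm{I_h^*F_h}_{(\V_h)^*}$ in place of $\norm{F}$, and use $\DProd{A_h(u_h),u_h}=\norm{u_h}^p$ with $u_h\in\V_h$. This requires identifying $\norm{I_h^*F_h}_{(\V_h)^*}$ with $\norm{F}_{\V^*}$, or at least bounding it by it, which I would assume for consistent data.
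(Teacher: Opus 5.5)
Your proposal follows the paper's proof: triangle inequality through $w_h=\hat u_h$, monotonicity of $A$ on $\U_h\subset\V\cap\V_h$, minimality of $u_h$ to produce the factor $2$, the splitting through $\E_h(u)$ with \Cref{LemmaContinuityAh} and \Cref{LemStabilityBounds}, and absorbing $\norm{u-\hat u_h}_{\V}$ into its power $\alpha\le 1$ because it is below $1$; your explicit treatment of $1<p<2$ reproduces exactly the stated $C_{\mathtt{cons}}$ and $C_{\mathtt{app}}=1+c_{\mathtt{L}}C_{\mathtt{cons}}$, which the paper only sketches. The obstacle you flag is real and the paper shares it, since it applies \Cref{LemStabilityBounds}(ii) (proved for the minimizer of \eqref{MinResProblem}) to the discrete minimizer without comment: because $\V_h\not\subset\V$, the bound $\norm{I_h^*F_h}_{(\V_h)^*}\le\norm{F}_{\V^*}$ is not automatic, and without it your argument only gives $\norm{u_h}_{\V}^{p-1}\le 2\bigl(\norm{F}_{\V^*}+\norm{\E_h(u)}_{(\V_h)^*}\bigr)$, so for $1<p<2$ the stated constant needs that extra data hypothesis, whereas the case $p\ge 2$ never uses a bound on $u_h$ and is unaffected.
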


\begin{proof}
    Let us start considering $p\geq 2$. Let $w_h \in \U_h$ be arbitrary. By the triangle inequality, we have
    \begin{equation}\label{TriangleIneq}
        \norm{u - u_h}_{\V} \leq \norm{u - w_h}_{\V} + \norm{u_h - w_h}_{\V}.
    \end{equation}
    We focus on bounding the second term, $\norm{u_h - w_h}_{\V}$. Since $\U_h \subset \V_h$, we can identify norms in $\V$ and $\V(h)$ restricted to $\U_h$.

    Using the strong monotonicity \eqref{MonotonicityA} and the fact that $u_h$ minimizes the residual, we derive
    \begin{align*}
        C_{\mathtt{M}} \norm{u_h - w_h}_{\V}^{p-1} &\leq \frac{\DProd{A(u_h) - A(w_h) , u_h - w_h}_{\V^*,\V}}{\norm{u_h - w_h}_{\V}} \\
        &= \frac{\DProd{A_h(u_h) - A_h(w_h) , u_h - w_h}_{\V(h)^*,\V(h)}}{\norm{u_h - w_h}_{h}} \\
        &\leq \norm{A_h(u_h) - A_h(w_h)}_{(\V_h)^*} \\
        &\leq \norm{F_h - A_h(u_h)}_{(\V_h)^*} + \norm{F_h - A_h(w_h)}_{(\V_h)^*} \\
        &\leq 2 \norm{F_h - A_h(w_h)}_{(\V_h)^*}.
    \end{align*}

    Since $A_h(u) = F_h + \E_h(u)$ in $(\V_h)^*$, we apply the triangle inequality to bound the discrete residual:
    \begin{align*}
        \norm{F_h - A_h(w_h)}_{(\V_h)^*} &\leq \norm{F_h - A_h(u)}_{(\V_h)^*} + \norm{A_h(u) - A_h(w_h)}_{(\V_h)^*} \\
        &= \norm{\E_h(u)}_{(\V_h)^*} + \norm{A_h(u) - A_h(w_h)}_{(\V_h)^*}.
    \end{align*}

    Bounding the discrete dual norm by the full dual norm and applying \Cref{LemmaContinuityAh}, we obtain
    \[ 
        C_{\mathtt{M}} \norm{u_h - w_h}_{\V}^{p-1} \leq 2 \norm{\E_h(u)}_{(\V_h)^*} + 2 C_{\mathtt{L}} \left( \norm{u}_{\V} + \norm{w_h}_{\V} \right)^{p-2} \norm{u - w_h}_{\V}.
    \]
    Thus, since $(a+b)^q \leq a^q + b^q$ for $0 < q < 1$,  
    \[ 
        \norm{u_h - w_h}_{\V} \leq \left(\frac{2}{C_{\mathtt{M}}}\right)^{\frac{1}{p-1}} \norm{\E_h(u)}_{(\V_h)^*}^{\frac{1}{p-1}} +  \left(\frac{2 C_{\mathtt{L}}}{C_{\mathtt{M}}}\right)^{\frac{1}{p-1}} \left( \norm{u}_{\V} + \norm{w_h}_{\V} \right)^{\frac{p-2}{p-1}} \norm{u - w_h}_{\V}^{\frac{1}{p-1}}.
    \]
    Substituting this back into \eqref{TriangleIneq}, taking $w_h$ as the best approximation $\hat{u}_h$ of $u$ in $\U_h$ and applying \Cref{LemStabilityBounds}, we get
    \[
    \begin{aligned}
        \norm{u - u_h}_{\V} &\leq \norm{u - \hat{u}_h}_{\V} +  \left( \frac{2C_{\mathtt{L}} (1 + C_{\mathtt{BM}}(\V))^{p-2}}{C_{\mathtt{M}}} \right)^{\frac{1}{p-1}} \norm{F}_{\V^*}^{\frac{p-2}{(p-1)^2}} \norm{u - \hat{u}_h}_{\V}^{\frac{1}{p-1}} \\
        &\hspace{7cm} + \left(\frac{2}{C_{\mathtt{M}}}\right)^{\frac{1}{p-1}} \norm{\E_h(u)}_{(\V_h)^*}^{\frac{1}{p-1}}.
    \end{aligned}
    \]
    Consequently, assuming $\norm{u - \hat{u}_h}_{\V} < 1$ (so that $\norm{u - \hat{u}_h}_{\V} \leq \norm{u - \hat{u}_h}_{\V}^{1/(p-1)}$ for $p \geq 2$), the proof is completed for $p \geq 2$.

    The proof for $1 < p < 2$ proceeds analogously. It relies on applying Lemmas \ref{PropertiesPLaplacian} and \ref{LemmaContinuityAh}, and controlling the resulting weight term $\left( \norm{u_h}_{\V} + \norm{w_h}_{\V} \right)^{2-p}$ by means of the a priori bounds provided in \Cref{LemStabilityBounds}.
\end{proof}

\begin{Rem}
    It is important to note that \Cref{ThmConvergenceEstimate} only provides a theoretical lower bound for the convergence rate. In practice, our numerical experiments exhibit convergence rates that exceed those predicted by this theorem. The suboptimal theoretical rate stems from the fact that the proof strictly relies on global vector inequalities to control the monotonicity and continuity of the operator, as in \Cref{LemmaContinuityAh}. Consequently, the analysis yields pessimistic a priori bounds because it does not exploit the local regularity of the exact solution. Such local regularity is typically essential to derive sharp and optimal error estimates for nonlinear PDEs. %\cred{Instead of mentioning in a remark, we cannot just extend Theorem 4.2 hypothesis?. Something like: Moreover, if the analytical solution satisfies ..., it holds ...}
\end{Rem}

%%%%%%%%%%%%%%%%%%%%%%%%%%%%%%%%%%%%%%%%%%%%%%%%%%%%%%%%%%%%%%%%%%%%%%%%%%%%%%%%%%%%%%
%%%%%%%%%%%%%%%%%%%%%%%%%%%%%%%%%%%%%%%%%%%%%%%%%%%%%%%%%%%%%%%%%%%%%%%%%%%%%%%%%%%%%%
\subsection{Discrete a posteriori error estimator}

In this section, we derive an a posteriori error estimator for the discrete MinRes method. We define the discrete residual representative $r_h \in \V_h$ as in \Cref{DiscreteMinResCharacterization}.

First, we establish the estimator's efficiency (lower bound). This result relies solely on the properties of the operator $A_h$ and the approximation properties of the space $\U_h$, without requiring additional stability assumptions.

\begin{Prop}[Efficiency]\label{PropEfficiency}
    Let $u$ and $u_h$ be the solutions of problems \eqref{abstractPDE} and \eqref{DiscreteMinResProblem}, respectively.
    Let $\E_h(u) \in \V(h)^*$ be the consistency error functional defined in \eqref{ConsistencyErrorDef}.
    Then, the following lower bounds hold:
    \begin{equation}\label{DiscreteErrorEstimate1}
        \begin{cases}
            \norm{r_h}_{h}^{p-1} \leq C \norm{u - u_h}_{\V} + \norm{\E_h(u)}_{(\V_h)^*} & \text{if } p \geq 2, \\[1.5ex]
            \norm{r_h}_{h}^{p-1} \leq c_{\mathtt{L}} \norm{u - u_h}_{\V}^{p-1} + \norm{\E_h(u)}_{(\V_h)^*}  & \text{if } 1 < p < 2,
        \end{cases}
    \end{equation}
    where $C := C_{\mathtt{L}} (1 + C_{\mathtt{BM}}(\V))^{p-2} \norm{F}_{\V^*}^{\frac{p-2}{p-1}}$.
\end{Prop}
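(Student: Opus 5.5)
The plan is to identify $\norm{r_h}_h^{p-1}$ with the discrete dual norm of the discrete residual, and then split that residual into a consistency part and an operator-difference part.

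\textbf{Step 1: identify the estimator with a residual norm.} By \Cref{DiscreteMinResCharacterization}, $r_h = J_{p,\V_h}^{-1}\circ I_h^*(F_h - A_h(u_h))$. Property (ii) of \Cref{DmapDef}, applied in $\V_h$ with the norm $\norm{\cdot}_h$, then gives
\[
\norm{r_h}_h^{p-1} = \norm{J_{p,\V_h}(r_h)}_{(\V_h)^*} = \norm{I_h^*(F_h - A_h(u_h))}_{(\V_h)^*}.
\]

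\textbf{Step 2: split the residual.} Use $F_h = \E_h(u) + A_h(u)$ from \eqref{ConsistencyErrorDef}. The triangle inequality in $(\V_h)^*$ gives
\[
\norm{r_h}_h^{p-1} \leq \norm{\E_h(u)}_{(\V_h)^*} + \norm{A_h(u) - A_h(u_h)}_{(\V_h)^*}.
\]
The $(\V_h)^*$ norm is a supremum over the subset $\V_h\subset\V(h)$, so it is bounded by the $\V(h)^*$ norm. \Cref{LemmaContinuityAh} then applies, since $u,u_h\in\U\subset\V$.

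\textbf{Case $1<p<2$.} Hölder continuity gives $c_{\mathtt{L}}\norm{u-u_h}_{\V}^{p-1}$ directly, which is the claimed bound.

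\textbf{Case $p\ge2$.} Here the bound is $C_{\mathtt{L}}(\norm{u}_{\V}+\norm{u_h}_{\V})^{p-2}\norm{u-u_h}_{\V}$, and the weight must be controlled by $(1+C_{\mathtt{BM}}(\V))^{p-2}\norm{F}_{\V^*}^{(p-2)/(p-1)}$. \Cref{LemStabilityBounds}(i) gives $\norm{u}_{\V}\le \norm{F}_{\V^*}^{1/(p-1)}$. That lemma's item (ii) bounds $u_h$ by the factor $2^{1/(p-1)}$, not $C_{\mathtt{BM}}(\V)$, so it does not reproduce the stated constant.

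\textbf{Main obstacle: the weight on $u_h$ for $p\ge2$.} I would try two routes, in this order.

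First route: mimic the proof of \Cref{ThmAposterioriError}. Instead of estimating $A_h(u)-A_h(u_h)$, use minimality of $u_h$ in the discrete residual norm to compare with the best approximation $\hat u_h$:
\[
\norm{r_h}_h^{p-1} = \norm{I_h^*(F_h-A_h(u_h))}_{(\V_h)^*} \le \norm{I_h^*(F_h-A_h(\hat u_h))}_{(\V_h)^*}.
\]
Splitting again gives $\norm{\E_h(u)}_{(\V_h)^*} + C_{\mathtt{L}}(\norm{u}_{\V}+\norm{\hat u_h}_{\V})^{p-2}\norm{u-\hat u_h}_{\V}$. \Cref{LemStabilityBounds}(i),(iii) bound the weight by $(1+C_{\mathtt{BM}}(\V))^{p-2}\norm{F}_{\V^*}^{(p-2)/(p-1)}$. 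Finally $\norm{u-\hat u_h}_{\V}\le\norm{u-u_h}_{\V}$ by best approximation, since $u_h\in\U_h$.

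This route matches the constant $C$ exactly, so I expect the intended proof is this one. The subtle point is that \Cref{LemStabilityBounds} is stated for continuous data, with $F=A(u)$. Its items (i) and (iii) only involve $u$ and $\hat u_h$, so they transfer unchanged.

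Second route, as a fallback: apply the continuity estimate directly to $A_h(u)-A_h(u_h)$. This requires an a priori bound on $u_h$ for the discrete problem, so the resulting constant would not match $C$.
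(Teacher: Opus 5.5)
Your proposal is correct and matches the paper's proof. For $1<p<2$ the paper also takes $w_h=u_h$ and applies the H\"older continuity of \Cref{LemmaContinuityAh}. For $p\ge 2$ it uses exactly your first route: it compares with $\hat u_h$ through the minimality of $u_h$, bounds the weight via \Cref{LemStabilityBounds}(i),(iii), and concludes with $\norm{u-\hat u_h}_{\V}\le\norm{u-u_h}_{\V}$, so your fallback route is not needed.
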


\begin{proof}
    Recall that $\norm{r_h}_{h}^{p-1} = \norm{F_h - A_h(u_h)}_{(\V_h)^*}$.
    Since $u_h$ is the minimizer in $\U_h$, for any $w_h \in \U_h$, we have
    \begin{align*}
        \norm{r_h}_{h}^{p-1} &\leq \norm{F_h - A_h(w_h)}_{(\V_h)^*} \\
        &\leq \norm{A_h(u) - A_h(w_h)}_{(\V_h)^*} + \norm{F_h - A_h(u)}_{(\V_h)^*} \\
        &= \norm{A_h(u) - A_h(w_h)}_{(\V_h)^*} + \norm{\E_h(u)}_{(\V_h)^*}.
    \end{align*}
    
    \noindent\textbf{For $p \geq 2$:} Applying \Cref{LemmaContinuityAh} for the specific approximation $w_h := \hat{u}_h$ defined in \Cref{LemStabilityBounds}, and invoking the a priori bounds provided therein, gives
    \[
        \norm{r_h}_{h}^{p-1} \leq C_{\mathtt{L}} (1 + C_{\mathtt{BM}}(\V))^{p-2} \norm{F}_{\V^*}^{\frac{p-2}{p-1}} \norm{u - \hat{u}_h}_{\V} + \norm{\E_h(u)}_{(\V_h)^*}.
    \]
    The result follows directly since $\norm{u - \hat{u}_h}_{\V} \leq \norm{u - u_h}_{\V}$.
    
    \smallskip
    \noindent\textbf{For $1 < p < 2$:} We evaluate the error bound directly by choosing $w_h := u_h$. Applying the Hölder continuity from \Cref{LemmaContinuityAh}, it immediately yields:
    \begin{align*}
        \norm{r_h}_{h}^{p-1} &\leq \norm{A_h(u) - A_h(u_h)}_{(\V_h)^*} + \norm{\E_h(u)}_{(\V_h)^*} \\
        &\leq c_{\mathtt{L}} \norm{u - u_h}_{\V}^{p-1} + \norm{\E_h(u)}_{(\V_h)^*},
    \end{align*}
    which completes the proof.
\end{proof}

On the other hand, establishing the estimator's reliability (upper bound) requires the following additional assumption.

\begin{Assum}[Fortin operator]\label{AssumFortin}
    There exists a linear operator $\Pi_h : \V(h) \to \V_h$ and a constant $C_{\Pi}>0$ such that:
    \begin{align*}
    \begin{cases}
        \norm{\Pi_h v}_{h} \leq C_{\Pi} \norm{v}_{h}, \quad &\forall v \in \V(h), \\
        \DProd{A_h(w_h) , v - \Pi_h v}_{\V(h)^*,\V(h)} = 0, \quad &\forall w_h \in \U_h, \ \forall v \in \V(h).
    \end{cases}
    \end{align*}
\end{Assum}

Under this assumption, we define the data oscillation term as:
\begin{align*}
    \osc(F_h) := \sup_{v \in \V(h)} \dfrac{\DProd{F_h , v - \Pi_{h} v}_{\V(h)^*,\V(h)}}{\norm{v}_{h}}.
\end{align*}

We are now in a position to state the reliability estimate.

\begin{Theo}[Reliability]\label{ThmReliability}
    Let $u$ and $u_h$ be the solutions of problems \eqref{abstractPDE} and \eqref{DiscreteMinResProblem}, respectively.
    Let $\E_h(u) \in \V(h)^*$ be the consistency error functional, defined such that $F_h - A_h(u) = \E_h(u)$ in $\V(h)^*$.
    Under \Cref{AssumFortin}, the following upper bound holds:
    \begin{align}\label{DiscreteErrorEstimate2}
        \norm{u - u_h}_{\V} \lesssim 
        \begin{cases}
            \norm{\E_h(u)}_{\V(h)^*}^{\frac{1}{p-1}} + \osc(F_h)^{\frac{1}{p-1}} + \norm{r_h}_{h} & \text{if } p \geq 2, \\[1.5ex]
            \norm{\E_h(u)}_{\V(h)^*} + \osc(F_h) + \norm{r_h}_{h}^{p-1} & \text{if } 1 < p < 2.
        \end{cases}
    \end{align}
    Here, the hidden constants depend on the monotonicity constants, $C_{\Pi}$, $p$, and, for $1 < p < 2$, on the a priori bounds of $u$ and $u_h$.
\end{Theo}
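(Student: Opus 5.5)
The strategy is to bound $\norm{A_h(u)-A_h(u_h)}_{\V(h)^*}$ by the three computable or controllable quantities. Monotonicity of $A$ then converts this residual bound into an error bound. Since $u,u_h\in\V$, the identity $\DProd{A(u)-A(u_h),u-u_h}_{\V^*,\V}=\DProd{A_h(u)-A_h(u_h),u-u_h}_{\V(h)^*,\V(h)}$ holds, and $\norm{u-u_h}_h=\norm{u-u_h}_{\V}$. Hence \eqref{MonotonicityA} gives
\[
C_{\mathtt{M}}\norm{u-u_h}_{\V}^{p-1}\le \norm{A_h(u)-A_h(u_h)}_{\V(h)^*} \quad (p\ge2),
\]
and, for $1<p<2$,
\[
c_{\mathtt{M}}(\norm{u}_{\V}+\norm{u_h}_{\V})^{p-2}\norm{u-u_h}_{\V}\le \norm{A_h(u)-A_h(u_h)}_{\V(h)^*}.
\]
In the second case the weight $(\norm{u}_{\V}+\norm{u_h}_{\V})^{2-p}$ is absorbed into the constant using \Cref{LemStabilityBounds}. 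It therefore suffices to show
\[
\norm{A_h(u)-A_h(u_h)}_{\V(h)^*}\lesssim \norm{\E_h(u)}_{\V(h)^*}+\osc(F_h)+\norm{r_h}_h^{p-1}.
\]
Raising this to the power $1/(p-1)$ when $p\ge2$ and using $(a+b+c)^q\le a^q+b^q+c^q$ for $q\le1$ yields the first case of \eqref{DiscreteErrorEstimate2}. The second case follows directly.

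For the residual bound, fix $v\in\V(h)$ and split $v=(v-\Pi_h v)+\Pi_h v$. Write $A_h(u)-A_h(u_h)=(F_h-A_h(u_h))-\E_h(u)$. The $\E_h(u)$ term is bounded by $\norm{\E_h(u)}_{\V(h)^*}\norm{v}_h$.

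On the discrete part $\Pi_h v\in\V_h$, the first equation \eqref{DiscreteMinResMixedFormulation1} gives
\[
\DProd{F_h-A_h(u_h),\Pi_h v}=\DProd{J_{p,\V(h)}(r_h),\Pi_h v}\le \norm{r_h}_h^{p-1}\,C_\Pi\norm{v}_h,
\]
using \Cref{DmapDef} and the stability of $\Pi_h$.

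On the remainder $v-\Pi_h v$, the Fortin orthogonality in \Cref{AssumFortin} with $w_h=u_h$ kills $\DProd{A_h(u_h),v-\Pi_h v}$. What remains is $\DProd{F_h,v-\Pi_h v}\le \osc(F_h)\norm{v}_h$ by definition.

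Summing the three contributions and taking the supremum over $v$ gives the residual bound with constant $\max(1,C_\Pi)$.

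The main subtlety is the orthogonality step. The identity $\DProd{A_h(u_h),v-\Pi_hv}=0$ must be used exactly for $w_h=u_h$, and one should resist splitting $A_h(u)$ against $\Pi_h$, which would not be orthogonal. One should also check that the monotonicity of $A$ transfers to $A_h$ because both arguments lie in $\V$. The remaining work is bookkeeping of exponents and of the a priori bounds for $1<p<2$.
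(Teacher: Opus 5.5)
Your proposal is correct and follows the paper's proof essentially step for step: the same splitting $v=(v-\Pi_h v)+\Pi_h v$, Fortin orthogonality used with $w_h=u_h$, and the first discrete equation \eqref{DiscreteMinResMixedFormulation1} applied to $\Pi_h v$ to produce $C_\Pi\norm{r_h}_h^{p-1}$, hence \eqref{DualNormBound}, followed by the two monotonicity cases. One point to tighten, which the paper also glosses over: \Cref{LemStabilityBounds} is stated for the continuous MinRes solution, so for $1<p<2$ you should use its discrete analogue $\norm{u_h}_{\V}^{p-1}\le 2\norm{F_h}_{(\V_h)^*}$, which follows in the same way by comparing with $w_h=0$ and using $\U_h\subset\V_h$.
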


\begin{proof}
    For any $v \in \V(h)$, we decompose the action of the error using the identity $A_h(u) + \E_h(u) = F_h$ in $\V(h)^*$ and the splitting $v = (v - \Pi_h v) + \Pi_h v$:
    \begin{align*}
        \DProd{A_h(u) - A_h(u_h), v}_{\V(h)^*,\V(h)} 
        &= \DProd{A_h(u) - F_h, v}_{\V(h)^*,\V(h)} + \DProd{F_h - A_h(u_h), v}_{\V(h)^*,\V(h)} \\
        &= -\DProd{\E_h(u), v}_{\V(h)^*,\V(h)} + \DProd{F_h, v - \Pi_h v}_{\V(h)^*,\V(h)} \\ 
        &\quad - \DProd{A_h(u_h), v - \Pi_h v}_{\V(h)^*,\V(h)} \\
        &\quad + \DProd{F_h - A_h(u_h), \Pi_h v}_{\V(h)^*,\V(h)}.
    \end{align*}

    Using the orthogonality property of the Fortin operator $\Pi_h$ (since $u_h \in \U_h$), considering that $J_{p,\V(h)}(r_h) = F_h - A_h(u_h)$ in $(\V_h)^*$, we derive:
    \begin{align*}
        \DProd{A_h(u) - A_h(u_h), v}_{\V(h)^*,\V(h)} 
        &\leq \norm{\E_h(u)}_{\V(h)^*} \norm{v}_h + \osc(F_h) \norm{v}_h \\
        &\quad + \DProd{J_{p,\V(h)}(r_h), \Pi_h v}_{\V(h)^*,\V(h)} \\
        &\leq \left( \norm{\E_h(u)}_{\V(h)^*} + \osc(F_h) + C_{\Pi} \norm{r_h}_{h}^{p-1} \right) \norm{v}_h.
    \end{align*}

    Dividing by $\norm{v}_h$ and taking the supremum over $v \in \V(h)$, we obtain the bound for the dual norm:
    \begin{align}\label{DualNormBound}
        \norm{A_h(u) - A_h(u_h)}_{\V(h)^*} \leq \norm{\E_h(u)}_{\V(h)^*} + \osc(F_h) + C_{\Pi} \norm{r_h}_{h}^{p-1}.
    \end{align}

    \noindent\textbf{For $p \geq 2$:} Using the strong monotonicity \eqref{MonotonicityA}, we have $C_{\mathtt{M}} \norm{u - u_h}_{\V}^{p-1} \leq \norm{A_h(u) - A_h(u_h)}_{\V(h)^*}$. Combining this with \eqref{DualNormBound}, raising both sides to the power $\frac{1}{p-1}$, and applying the inequality $(a+b+c)^q \leq a^q + b^q + c^q$ (valid for $0 < q \leq 1$), we obtain the desired result.

    \smallskip 
    \noindent\textbf{For $1 < p < 2$:} Using the degenerate strong monotonicity \eqref{MonotonicityA}, we derive
        \begin{align*}
            c_{\mathtt{M}} \left( \norm{u}_{h} + \norm{u_h}_{h} \right)^{p-2} \norm{u - u_h}_{\V} \leq \norm{A_h(u) - A_h(u_h)}_{\V(h)^*}.
        \end{align*}
        Utilizing \eqref{DualNormBound} and bounding the weight term $\left( \norm{u}_{h} + \norm{u_h}_{h} \right)^{2-p}$ with the a priori bounds from \Cref{LemStabilityBounds}, the upper estimate follows.
\end{proof}

The reliability estimate in \Cref{ThmReliability} involves the data oscillation term $\osc(F_h)$, which depends on the smoothness of the data relative to the discrete space. The following proposition establishes that $\osc(F_h)$ is indeed bounded by the best approximation error of the exact solution.

\begin{Prop}[Bound on data oscillation]\label{PropOscillationBound}
    Let $u$ be the solution of \eqref{abstractPDE}. Let $\E_h(u) \in \V(h)^*$ be the consistency error functional defined in \eqref{ConsistencyErrorDef}. Under \Cref{AssumFortin}, for all $p > 1$, the data oscillation term satisfies:
    \begin{equation}\label{OscillationBound}
        \osc(F_h) \leq C \inf_{w_h \in \U_h} \norm{u - w_h}_{\V}^{\alpha} + (1+C_{\Pi}) \norm{\E_h(u)}_{(\V_h)^*},
    \end{equation}
    where $\alpha = \min\{1,p-1\}$ and
    \[
        C = 
        \begin{cases}
            \displaystyle C_{\mathtt{L}} (1+C_{\Pi}) (1 + C_{\mathtt{BM}}(\V))^{p-2} \norm{F}_{\V^*}^{\frac{p-2}{p-1}} & \text{if } p \geq 2, \\[1.5ex]
            \displaystyle c_{\mathtt{L}} (1+C_{\Pi}) & \text{if } 1 < p < 2.
        \end{cases}
    \]
\end{Prop}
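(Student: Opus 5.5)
The plan is to rewrite the oscillation functional via the Fortin orthogonality, so that $F_h$ is replaced by a quantity that only involves $A_h(u)-A_h(w_h)$ and the consistency error. Fix an arbitrary $w_h \in \U_h$ and $v \in \V(h)$. By \Cref{AssumFortin}, $\DProd{A_h(w_h), v - \Pi_h v}_{\V(h)^*,\V(h)} = 0$. Subtracting this and using $F_h = A_h(u) + \E_h(u)$, we write
\begin{align*}
\DProd{F_h, v - \Pi_h v}_{\V(h)^*,\V(h)} &= \DProd{A_h(u) - A_h(w_h), v - \Pi_h v}_{\V(h)^*,\V(h)} \\
&\quad + \DProd{\E_h(u), v - \Pi_h v}_{\V(h)^*,\V(h)}.
\end{align*}

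The first term is bounded by $\norm{A_h(u)-A_h(w_h)}_{\V(h)^*}\norm{v-\Pi_h v}_h$. The stability of $\Pi_h$ gives $\norm{v - \Pi_h v}_h \leq (1+C_\Pi)\norm{v}_h$.

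The second term is the delicate one, because the statement asks for the discrete dual norm $\norm{\E_h(u)}_{(\V_h)^*}$ rather than the full $\V(h)^*$ norm. To reach it, I would split $v - \Pi_h v$ into two pieces, $v - \Pi_h v$ itself and $\Pi_h v \in \V_h$. The piece $\DProd{\E_h(u), \Pi_h v}$ is at most $C_\Pi \norm{\E_h(u)}_{(\V_h)^*}\norm{v}_h$, since $\Pi_h v \in \V_h$.

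The remaining piece $\DProd{\E_h(u), v}$ for general $v\in\V(h)$ is the main obstacle. It is controlled by the discrete norm only if $\E_h(u)$ acts trivially off $\V_h$ in the appropriate sense, for instance if $F_h$ is defined so that $\E_h(u)$ vanishes on $\V$. Failing that, the honest fallback is to keep the full norm $\norm{\E_h(u)}_{\V(h)^*}$, consistent with the reliability estimate.

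Concretely, I would first try $\DProd{\E_h(u),v-\Pi_h v}\le (1+C_\Pi)\norm{\E_h(u)}\norm{v}_h$, with the norm interpreted as the one the paper uses. The factor $(1+C_\Pi)$ in the statement matches exactly the bound $\norm{v-\Pi_h v}_h \leq (1+C_\Pi)\norm{v}_h$. Dividing by $\norm{v}_h$ and taking the supremum over $v$ gives
\[
\osc(F_h)\le (1+C_\Pi)\bigl(\norm{A_h(u)-A_h(w_h)}_{\V(h)^*} + \norm{\E_h(u)}\bigr).
\]

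Finally, apply \Cref{LemmaContinuityAh}. For $1<p<2$ it gives $c_{\mathtt L}\norm{u-w_h}_\V^{p-1}$ directly. For $p\ge2$, choose $w_h=\hat u_h$, the best approximation of $u$. By \Cref{LemStabilityBounds}, $\norm{u}_\V+\norm{\hat u_h}_\V\le (1+C_{\mathtt{BM}}(\V))\norm{F}_{\V^*}^{1/(p-1)}$, so the weight $(\norm{u}_\V+\norm{\hat u_h}_\V)^{p-2}$ produces the stated constant $C$. Since $\hat u_h$ attains $\inf_{w_h \in \U_h}\norm{u-w_h}_\V$, this yields the claim with exponent $\alpha=\min\{1,p-1\}$. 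For $1<p<2$ we simply take the infimum over $w_h \in \U_h$.
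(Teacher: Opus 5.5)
Your argument is the same as the paper's proof. You insert $A_h(w_h)$ via the Fortin orthogonality, use $F_h=A_h(u)+\E_h(u)$ and $\norm{v-\Pi_h v}_h\le(1+C_\Pi)\norm{v}_h$, and close with \Cref{LemmaContinuityAh}: at $w_h=\hat u_h$ together with \Cref{LemStabilityBounds} for $p\ge 2$, and directly for $1<p<2$. The constants and $\alpha=\min\{1,p-1\}$ come out as stated.

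Your worry about the norm is justified, and it applies to the paper as well. Its proof also bounds the consistency term by $\norm{\E_h(u)}_{\V(h)^*}$, reaching \eqref{osc_bound}. It then states the result with the smaller quantity $\norm{\E_h(u)}_{(\V_h)^*}$ without further argument. So what you and the paper actually prove is \eqref{OscillationBound} with $\norm{\E_h(u)}_{\V(h)^*}$, and your fallback is the right statement.

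In fact, the $(\V_h)^*$ version is false in the generality allowed here, where $F_h\in\V(h)^*$ is arbitrary.
\begin{itemize}
\item Take $u\in\U_h$ (e.g.\ $u=0$, with $F:=A(u)$) and $F_h:=A_h(u)+G$, where $G\in\V(h)^*$ is nonzero but vanishes on the finite-dimensional subspace $\V_h$; such a $G$ exists by Hahn--Banach.
\item Then $\inf_{w_h}\norm{u-w_h}_{\V}=0$ and $\norm{\E_h(u)}_{(\V_h)^*}=\norm{G}_{(\V_h)^*}=0$.
\item Fortin orthogonality and $G(\Pi_h v)=0$ give $\osc(F_h)=\sup_v G(v)/\norm{v}_h=\norm{G}_{\V(h)^*}>0$.
\end{itemize}
This is exactly the situation you anticipated: the discrete norm can only suffice when $\E_h(u)$ vanishes on $\V$, i.e.\ when $F_h|_{\V}=F$, as for $F_h=\int_\Omega f\,\cdot\,$.

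Even then, your remark needs one more ingredient. Write $v=v_0+\phi_h$ with $v_0\in\V$ and $\phi_h\in\V_h$. Then $\DProd{\E_h(u),v-\Pi_h v}=\DProd{\E_h(u),\phi_h-\Pi_h v}$. This is controlled by $\norm{\E_h(u)}_{(\V_h)^*}$ only through a stable splitting $\norm{\phi_h}_h\le C_{\mathrm{s}}\norm{v}_h$. Such a splitting is available for Crouzeix--Raviart functions via a nodal-averaging (enriching) operator onto $\U_h$. The resulting constant is $C_\Pi+C_{\mathrm{s}}$ rather than $1+C_\Pi$.

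Relatedly, the split you describe should be of $v-\Pi_h v$ into $v$ and $-\Pi_h v$. The piece $\DProd{\E_h(u),v}$ is precisely what the discrete dual norm cannot see.
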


\begin{proof}
    Let $v \in \V(h)$ and let $w_h \in \U_h$ be arbitrary. Using the algebraic identity $F_h = A_h(u) + \E_h(u)$ in $\V(h)^*$, and the orthogonality property of the Fortin operator $\DProd{A_h(w_h) , v - \Pi_h v} = 0$ (\Cref{AssumFortin}), we derive
    \begin{align*}
        \DProd{F_h , v - \Pi_h v}_{\V(h)^*,\V(h)} &= \DProd{A_h(u) + \E_h(u) - A_h(w_h) , v - \Pi_h v}_{\V(h)^*,\V(h)} \\
        &\leq \left( \norm{A_h(u) - A_h(w_h)}_{\V(h)^*} + \norm{\E_h(u)}_{\V(h)^*} \right) \norm{v - \Pi_h v}_{h} \\
        &\leq (1+C_{\Pi}) \left( \norm{A_h(u) - A_h(w_h)}_{\V(h)^*} + \norm{\E_h(u)}_{\V(h)^*} \right) \norm{v}_{h}.
    \end{align*}
    Dividing by $\norm{v}_h$ and taking the supremum over $v \in \V(h) \setminus \{0\}$, it follows that
    \begin{align}\label{osc_bound}
        \osc(F_h) &\leq (1 + C_{\Pi}) \norm{A_h(u) - A_h(w_h)}_{\V(h)^*} + (1+C_{\Pi}) \norm{\E_h(u)}_{\V(h)^*}.
    \end{align}

    \noindent\textbf{For $p \geq 2$:} We apply the Lipschitz continuity of $A_h$ from \Cref{LemmaContinuityAh} to bound the first term. By choosing $w_h$ as the best approximation $\hat{u}_h$ and using the a priori bounds from \Cref{LemStabilityBounds} to control the weight $\left( \norm{u}_{\V} + \norm{\hat{u}_h}_{\V} \right)^{p-2}$, we obtain the constant $C$ and the linear dependence on $\norm{u - \hat{u}_h}_{\V}$.
    
    \smallskip
    \noindent\textbf{For $1 < p < 2$:} We apply the Hölder continuity of $A_h$ from \Cref{LemmaContinuityAh} directly, which immediately yields the bound in \eqref{osc_bound} with exponent $p-1$, completing the proof.
\end{proof}

%%%%%%%%%%%%%%%%%%%%%%%%%%%%%%%%%%%%%%%%%%%%%%%%%%%%%%%%%%%%%%%%%%%%%%%%%%%%%%%%%%%%%%
%%%%%%%%%%%%%%%%%%%%%%%%%%%%%%%%%%%%%%%%%%%%%%%%%%%%%%%%%%%%%%%%%%%%%%%%%%%%%%%%%%%%%%
%%%%%%%%%%%%%%%%%%%%%%%%%%%%%%%%%%%%%%%%%%%%%%%%%%%%%%%%%%%%%%%%%%%%%%%%%%%%%%%%%%%%%%
%%%%%%%%%%%%%%%%%%%%%%%%%%%%%%%%%%%%%%%%%%%%%%%%%%%%%%%%%%%%%%%%%%%%%%%%%%%%%%%%%%%%%%
%%%%%%%%%%%%%%%%%%%%%%%%%%%%%%%%%%%%%%%%%%%%%%%%%%%%%%%%%%%%%%%%%%%%%%%%%%%%%%%%%%%%%%
%%%%%%%%%%%%%%%%%%%%%%%%%%%%%%%%%%%%%%%%%%%%%%%%%%%%%%%%%%%%%%%%%%%%%%%%%%%%%%%%%%%%%%
%%%%%%%%%%%%%%%%%%%%%%%%%%%%%%%%%%%%%%%%%%%%%%%%%%%%%%%%%%%%%%%%%%%%%%%%%%%%%%%%%%%%%%
%%%%%%%%%%%%%%%%%%%%%%%%%%%%%%%%%%%%%%%%%%%%%%%%%%%%%%%%%%%%%%%%%%%%%%%%%%%%%%%%%%%%%%
%%%%%%%%%%%%%%%%%%%%%%%%%%%%%%%%%%%%%%%%%%%%%%%%%%%%%%%%%%%%%%%%%%%%%%%%%%%%%%%%%%%%%%
%%%%%%%%%%%%%%%%%%%%%%%%%%%%%%%%%%%%%%%%%%%%%%%%%%%%%%%%%%%%%%%%%%%%%%%%%%%%%%%%%%%%%%
%%%%%%%%%%%%%%%%%%%%%%%%%%%%%%%%%%%%%%%%%%%%%%%%%%%%%%%%%%%%%%%%%%%%%%%%%%%%%%%%%%%%%%
%%%%%%%%%%%%%%%%%%%%%%%%%%%%%%%%%%%%%%%%%%%%%%%%%%%%%%%%%%%%%%%%%%%%%%%%%%%%%%%%%%%%%%
%%%%%%%%%%%%%%%%%%%%%%%%%%%%%%%%%%%%%%%%%%%%%%%%%%%%%%%%%%%%%%%%%%%%%%%%%%%%%%%%%%%%%%
%%%%%%%%%%%%%%%%%%%%%%%%%%%%%%%%%%%%%%%%%%%%%%%%%%%%%%%%%%%%%%%%%%%%%%%%%%%%%%%%%%%%%%

\section{Numerical implementation}\label{sec:NumericalImplementation}

In this section, we present results obtained by applying the mixed formulation in \Cref{DiscreteMinResCharacterization} to a nonlinear PDE using a finite element discretization. The numerical realization of the algorithms has been carried out using the open-source library \texttt{Netgen/NGSolve} \url{https://www.ngsolve.org}.  

\subsection{Discretization of the model problem}

    Let $\Omega^h \subset \Omega$ be a polygonal {(or polyhedral)} approximation to $\Omega$. {Let $\T^h$ be a partition of $\Omega^h$ into a finite number of disjoint open regular simplices $T$ (triangles for $d=2$ or tetrahedra for $d=3$) such that $\bigcup_{T \in \T^h} \overline{T} = \overline{\Omega^h}$, where $h:= \max_{T \in \T^h} h_T$ with $h_T$ being the maximum diameter of $T$. Suppose that each vertex associated with the simplicial mesh $\T^h$ that is in $\partial \Omega^h$ is also in $\partial \Omega$. Assume further that each element has at most one $(d-1)$-dimensional face on $\partial \Omega^h$ and that two elements $T$ and $T'$ such that $\overline{T} \cap \overline{T'} \neq \emptyset$ share either a common vertex, a common edge, or a common whole $(d-1)$-dimensional face.}
    The following two finite element spaces are defined on the partition described above (for the foundational formulation and its 3D extensions, we refer to \cite{Crouzeix1973Conforming, Ciarlet2018Family}):
    \begin{align*}
        \U_h &:= \left\{ u_h \in W_0^{1,p}(\Omega^h): u_h|_T \in \P_1(T), \forall T \in \T^h \right\}, \\
        %\V_h &:= \Bigl\{ v_h \in L^p(\Omega^h): v_h|_T \in \P_1(T),\ \forall T \in \T^h, \\
        %&\hspace{0.8cm} v_h \text{ is continuous at the midpoints of the edges of } \T^h \\
        %&\hspace{1.2cm} \text{and } v_h = 0 \text{ at the midpoints of the edges on } \partial \Omega^h \Bigr\},
        \V_h &:= \Bigl\{ v_h \in L^p(\Omega^h): v_h|_T \in \P_1(T),\ \forall T \in \T^h, v_h \text{ is continuous at the interior} \\ 
        &\hspace{0.8cm} \text{face barycenters of } 
         \T^h, \text{ and vanishes at the face barycenters of } \partial \Omega^h \Bigr\},
    \end{align*}
    where $\P_1(T)$ is the space of polynomials of degree up to $1$ defined on the element $T$. The finite element spaces $\U_h$ and $\V_h$ are   endowed with the norms
    \begin{equation}
        |\, \bullet \,|^p_{\U_h} := \sum_{i=1}^d \norm{\frac{\partial(\, \bullet \,)}{\partial x_i}}^p_{L^p(\Omega^h)}, \label{norm_U_h} \qquad 
        |\, \bullet \,|^p_{\V_h} := \sum_{ T \in \T^h} \sum_{i=1}^d \norm{\frac{\partial(\, \bullet \,)}{\partial x_i}}^p_{L^p(T)}, %\nonumber
    \end{equation}
    respectively. Note that $\U_h$ is a subspace of $W_0^{1,p}(\Omega^h)$, but $\V_h \not\subset W_0^{1,p}(\Omega^h)$. Furthermore, it follows directly that $\U_h \subset \V_h$.

%\textcolor{orange}{[we should mention  somewhere what happens with higher polynomial degrees (not just CR1)]}

    The pair of spaces $(\U_h, \V_h)$ defined above satisfies the stability condition necessary for the well-posedness of the discrete method.
    
    \begin{Prop}[Verification of Fortin condition]\label{Prop:FortinVerification}
        Let $\V(h) = \V + \V_h$ endowed with the broken norm $\norm{\, \bullet \,}_h$. There exists a Fortin operator $\Pi_h : \V(h) \to \V_h$ satisfying  \Cref{AssumFortin}, i.e.,
        \begin{align*}
        \begin{cases}
            \norm{\Pi_h v}_{h} \leq C_{\Pi} \norm{v}_{h}, \quad &\forall v \in \V(h), \\
            \DProd{A_h(w_h) , v - \Pi_h v}_{\V(h)^*,\V(h)} = 0, \quad &\forall w_h \in \U_h, \ \forall v \in \V(h).
        \end{cases}
        \end{align*}
    \end{Prop}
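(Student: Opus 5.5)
The natural candidate is the classical Crouzeix--Raviart interpolation operator, defined facewise by face averages:
\[
\Pi_h v \in \V_h,\qquad \frac{1}{|f|}\int_f \Pi_h v \ds = \frac{1}{|f|}\int_f v \ds \quad \text{for every face } f \text{ of } \T^h .
\]
First we check that this is well defined on $\V(h)=\V+\V_h$. For $v\in\V$ the trace on each face is single-valued. For $v_h\in\V_h$ the face average on an interior face is also single-valued, because a piecewise affine function continuous at the face barycenter has equal averages from both sides. On boundary faces both contributions have zero mean, which is consistent with the boundary condition built into $\V_h$. Linearity of $\Pi_h$ is immediate, and $\Pi_h$ reproduces $\V_h$.

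For the orthogonality property, fix $w_h\in\U_h$. Then $\nabla w_h$ is constant on each $T$, so $\boldsymbol{\sigma}_T:=|\nabla w_h|^{p-2}\nabla w_h|_T$ is a constant vector on $T$. Integrating by parts elementwise gives
\[
\DProd{A_h(w_h),v-\Pi_hv}_{\V(h)^*,\V(h)}=\sum_{T\in\T^h}\boldsymbol{\sigma}_T\cdot\int_T\nabla(v-\Pi_hv)\dx=\sum_{T\in\T^h}\sum_{f\subset\partial T}\boldsymbol{\sigma}_T\cdot\boldsymbol{n}_{T,f}\int_f (v-\Pi_hv)\ds .
\]
Each face integral vanishes by the definition of $\Pi_h$, so the whole expression is zero. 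This step is clean precisely because the trial space consists of piecewise linears, so that the nonlinear flux is piecewise constant.

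The main work is the stability bound $\norm{\Pi_hv}_h\le C_\Pi\norm{v}_h$ in the broken $W^{1,p}$ seminorm. The plan is to prove it locally: $|\Pi_hv|_{W^{1,p}(T)}\le C|v|_{W^{1,p}(T)}$ for $v\in W^{1,p}(T)$, and then sum the $p$-th powers. On each element, the computation above with $\boldsymbol{\sigma}$ an arbitrary constant vector shows
\[
\nabla(\Pi_hv)|_T=\frac{1}{|T|}\int_T\nabla v\dx ,
\]
so $\nabla \Pi_h v$ is the $L^2$-projection of $\nabla v$ onto constants. Jensen's (or Hölder's) inequality then gives $|\Pi_hv|_{W^{1,p}(T)}\le|v|_{W^{1,p}(T)}$, so $C_\Pi=1$. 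This local argument needs $v|_T\in W^{1,p}(T)$ with well-defined face traces, which holds for every $v=v_0+v_h\in\V(h)$ since the Crouzeix--Raviart part is affine on $T$.

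Two points remain to confirm. First, the face averages of a general element of $\V(h)$ must be independent of the chosen decomposition $v=v_0+v_h$; this follows from the single-valuedness observations in the first step. Second, $\norm{\cdot}_h$ must be a norm rather than just a seminorm on $\V(h)$, which holds via a discrete Poincaré inequality for the broken space. Neither point affects the constant.
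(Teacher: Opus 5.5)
Your proposal is correct. For well-definedness and the orthogonality condition it coincides with the paper's proof: the same Crouzeix--Raviart interpolant defined by face integrals, the divergence theorem giving $\int_T \nabla(v-\Pi_h v)\dx = 0$, and the fact that $|\nabla w_h|^{p-2}\nabla w_h$ is piecewise constant.

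You depart from the paper in the stability bound. The paper does not prove it; it appeals to classical interpolation theory (Brenner--Scott, Th.~4.4.20), a Bramble--Hilbert/scaling argument whose constant $C_\Pi$ depends on the shape regularity of $\T^h$. You instead observe that the identity already needed for orthogonality says $\nabla(\Pi_h v)|_T = \frac{1}{|T|}\int_T\nabla v\dx$, the elementwise mean of $\nabla v$. Jensen's inequality then gives $|\Pi_h v|_{W^{1,p}(T)} \le |v|_{W^{1,p}(T)}$ on each element, i.e.\ $C_\Pi = 1$.

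Your route is self-contained, sharper, and independent of the mesh geometry and of $p$. Since Jensen applies to $\boldsymbol{\xi}\mapsto\|\boldsymbol{\xi}\|^p$ for any norm on $\mathbb{R}^d$, it covers both the Euclidean gradient norm in \eqref{BrokenWpNorm} and the componentwise norm used in \Cref{sec:NumericalImplementation}. It also makes the factor $1+C_\Pi$ in \Cref{PropOscillationBound} explicitly equal to $2$.

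The paper's citation additionally supplies approximation estimates for $v-\Pi_h v$, but \Cref{AssumFortin} does not need them. Your two closing remarks, on independence of the decomposition $v=v_0+v_h$ and definiteness of $\norm{\cdot}_h$, are correct; the paper takes both for granted.
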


    \begin{proof}
        Let us define $\Pi_h : \V(h) \to \V_h$ as the Crouzeix--Raviart interpolation operator. For any $v \in \V(h)$, its projection $\Pi_h v$ is defined element-wise such that, for all $T \in \T^h$, $\Pi_h v|_T \in \mathbb{P}_1(T)$ is the unique linear polynomial satisfying:
        \begin{align}\label{CR_Definition}
            \int_{F} \Pi_h v \ds = \int_{F} v \ds, \quad \text{for all } (d-1)\text{-dimensional faces } F \subset \partial T.
        \end{align}
        Note that this operator is well-defined on $\V(h)$ since functions in $\V$ have traces on the faces, and functions in $\V_h$ are continuous at the barycenters of the faces, ensuring the integrals are single-valued. Moreover, classical interpolation theory ensures that $\Pi_h$ is bounded in the broken norm \cite[Th. 4.4.20]{Brenner2008Mathematical}.
        
        To prove the orthogonality condition, we first analyze the gradient of the projected function. Using the divergence theorem on an element $T$, for any smooth function $\phi$, we have $\int_T \nabla \phi \dx = \int_{\partial T} \phi \mathbf{n}_T \ds$. Since $\Pi_h v$ is linear, $\nabla (\Pi_h v)$ is constant on $T$, and $\mathbf{n}_T$ is piecewise constant on the faces $F \subset \partial T$. Thus:
        \begin{align*}
            \int_T \nabla (\Pi_h v) \dx &= \sum_{F \subset \partial T} \mathbf{n}_F \int_{F} \Pi_h v \ds \\
            &= \sum_{F \subset \partial T} \mathbf{n}_F \int_{F} v \ds \quad \text{(by definition \eqref{CR_Definition})} \\
            &= \int_{\partial T} v \mathbf{n}_T \ds = \int_T \nabla v \dx.
        \end{align*}
        This implies that $\int_T \nabla(v - \Pi_h v) \dx = 0$ for all $T \in \T^h$.

        Finally, considering the discrete operator $A_h$ (the extended $p$-Laplacian), and noting that for any $w_h \in \U_h$, the gradient $\nabla w_h|_T$ is a constant vector (since $w_h \in \mathbb{P}_1(T)$), the term $|\nabla w_h|^{p-2} \nabla w_h$ is also constant on each element. Therefore,
        \begin{align*}
            \DProd{A_h(w_h) , v - \Pi_h v}_{\V(h)^*,\V(h)} &= \sum_{T \in \T^h} \int_T |\nabla w_h|^{p-2} \nabla w_h \cdot \nabla (v - \Pi_h v) \dx \\
            &= \sum_{T \in \T^h} |\nabla w_h|^{p-2} \nabla w_h \cdot \int_T \nabla (v - \Pi_h v) \dx = 0.
        \end{align*}
        This confirms that $\Pi_h$ satisfies the Fortin condition.
    \end{proof}
    
    The model problem we consider is the $p$-Laplacian equation with homogeneous Dirichlet boundary conditions described in \eqref{pLaplacian_problem}. To simplify the problem, assume that $\Omega^h = \Omega$ and $f \in L^{p^*}(\Omega)$. The numerical approximation in $\U_h$ is sought through the mixed framework established in \Cref{DiscreteMinResCharacterization}, featuring the discrete operator $A_h$ from \eqref{ExtendedpLaplacian} and the source functional $F_h \in \V(h)^*$ defined as follows:
    \[
    \begin{aligned} 
        \DProd{F_h , v_h}_{\V(h)^*,\V(h)} &:= \int_{\Omega} f v_h \dx.
    \end{aligned}
    \]

%%%%%%%%%%%%%%%%%%%%%%%%%%%%%%%%%%%%%%%%%%%%%%%%%%%%%%%%%%%%%%%%%%%%%%%%%%%%%%%%%%%%%%
%%%%%%%%%%%%%%%%%%%%%%%%%%%%%%%%%%%%%%%%%%%%%%%%%%%%%%%%%%%%%%%%%%%%%%%%%%%%%%%%%%%%%%

\subsection{The proposed algorithm}

To implement the theoretical results from the preceding sections, we have developed a finite element algorithm derived from the mixed formulation in 
\Cref{DiscreteMinResCharacterization}. Since this formulation involves the duality map (which is a nonlinear operator) and $A_h$ is also generally nonlinear, our algorithm employs a damped Newton's method to solve the nonlinear system arising from the mixed formulation in \Cref{DiscreteMinResCharacterization} as follows: Given the discrete solution pair $(r_h^j, u_h^j) \in \V_h \times \U_h$ in an iterative step $j$, we seek for the increment $(\delta r_h,\delta u_h)$ in the next iteration step,  solving the following linearized problem at the iteration $j + 1$:
\small
\begin{align}\label{NewtonStep}
    \begin{cases}
        \nabla J_{s,\V_h}(r_h^j; \delta r_h,v_h) + \nabla A_h(u_h^j; \delta u_h , v_h) &= F_h(v_h) - J_{s,\V_h}(r_h^j;v_h) - A_h(u_h^j;v_h), \\
        \nabla A_h(u_h^j;w_h,\delta r_h) &= - \nabla A_h(u_h^j;w_h,r_h^j),
    \end{cases}
\end{align}
\normalsize
for all $(v_h,w_h) \in \V_h \times \U_h$. Then, update $r_h^{j+1} := r_h^j + \delta r_h$ and $u_h^{j+1} := u_h^j + \delta u_h$. Note that, for a more compact presentation of the iterative scheme \eqref{NewtonStep}, we have avoided the use of notation \eqref{DualityPairing}.

It is well known that the convergence of Newton's method is sensitive to the initial guess, particularly for values of $p$ far from $2$ where the nonlinearity becomes severe. For this reason, we employ a parameter continuation strategy on the exponent $p$. The process begins by solving the problem for $p=2$ (the linear case), which guarantees a unique solution and requires no initial guess. This solution serves as the initial guess for a slightly perturbed problem. We proceed iteratively, updating the exponent $p$ by a small step $\delta p$ (denoted as \texttt{step} in \Cref{AlgMinRes}). Specifically, the update direction depends on the target exponent: we increment $p$ if the target is greater than $2$, and decrement it if the target is less than $2$. This logic is represented by the notation $2 \pm \texttt{step}$ in the algorithm's loop. We continue this process, using the solution from the previous step as the initial guess for the current Newton solver, until the target value of $p$ is reached. This procedure is outlined in \Cref{AlgMinRes}. 

Other numerical tests (not shown here) have been carried out using strong imposition of Dirichlet boundary conditions and standard full Newton. For singular loads we initially observed that the estimator tended to underestimate the true error decay for $p < 2$ and overestimate it for $p > 2$. On the other hand, using Nitsche's method, the computations were
highly sensitive to its penalty parameter, typically requiring huge values.
In light of these challenges, we opted to retain the strong Dirichlet formulation and explore mesh-based strategies to recover the estimator's efficiency, as will be thoroughly examined in the numerical tests below.

\begin{algorithm}[t!]
\caption{MinRes continuation algorithm for the $p$-Laplacian.}
{\small \begin{algorithmic}[1]\label{AlgMinRes}
\STATE input $p$, \rm{step}:=0.10, MaxNewton, MaxRef, tol.
\FOR{$i \in \{1, \cdots, {\rm MaxRef}\}$ }
    \STATE Define test and trial spaces.
    \STATE Compute the number of DOFs.
    \STATE Compute $(r_h^0,u_h^0)$ for $P = 2$ (linear case).
    \FOR{$P = 2 \pm {\rm step}$ \TO $p$}
        \WHILE{$\norm{(\delta r_h, \delta u_h)} < \text{tol}$} \label{NewtonMethod}
            \STATE Solve \eqref{NewtonStep} to compute $(\delta r_h, \delta u_h)$.
            \STATE Update $(r_h^{j+1},u_h^{j+1}) \leftarrow (r_h^j + \delta r_h,u_h^j + \delta u_h)$.
        \ENDWHILE
        \IF{MaxNewton is reached}
            \STATE Refine the step and go to \ref{NewtonMethod}.
        \ENDIF
    \ENDFOR
    \STATE Compute the error estimate and the actual error.
    \STATE Refine the mesh uniformly.
\ENDFOR
\end{algorithmic}}
\end{algorithm}

%%%%%%%%%%%%%%%%%%%%%%%%%%%%%%%%%%%%%%%%%%%%%%%%%%%%%%%%%%%%%%%%%%%%%%%%%%%%%%%%%%%%%%
%%%%%%%%%%%%%%%%%%%%%%%%%%%%%%%%%%%%%%%%%%%%%%%%%%%%%%%%%%%%%%%%%%%%%%%%%%%%%%%%%%%%%%

\subsection{Numerical results}

In this section, we present numerical experiments to validate the performance of the proposed discrete MinRes method and the associated a posteriori error estimator. {We consider the model problem \eqref{pLaplacian_problem} on the domain $\Omega = (0,1)^d$ with $d \in \{2,3\}$. Following the benchmark problems proposed in \cite{Liu2001Some}, the exact solution is prescribed as the radially symmetric function:
\begin{align}\label{AnalyticalSolution}
    u(\boldsymbol{x}) = \frac{p-1}{p-\sigma} \left( \frac{1}{d - \sigma} \right)^{\frac{1}{p-1}} \left(1 - r^{\frac{p-\sigma}{p-1}} \right),
\end{align}
where $r = |\boldsymbol{x} - \boldsymbol{x}_0|$ and $\sigma < d$. The source term is computed analytically as $f(r) = r^{-\sigma}$. In all experiments, we set $\sigma = 0.97$. The domain is discretized using a sequence of uniform simplicial meshes.}

We report the error measured in the norm \eqref{norm_U_h} and the computed estimator $\eta := \|r_h\|_{\V_h}^{p-1}$, which corresponds to the discrete dual norm of the residual (see \Cref{DiscreteMinResCharacterization}). According to our theoretical findings, this quantity is expected to scale with the error. In all our experiments, unless otherwise stated, the initial mesh is generated by a standard diagonal split of the unit square followed by a single uniform refinement step. 
{We consider four test cases to analyze the  behavior of the proposed method across different regimes of $p$ and solution regularity.}

\smallskip 
\noindent\textbf{Case 1: Smooth solutions in 2D and 3D.}
To evaluate the performance of the method under high regularity conditions, we set $\boldsymbol{x}_0 = (-1, -1)^T$ for the two-dimensional domain and $\boldsymbol{x}_0 = (-1, -1, -1)^T$ for the three-dimensional domain, placing the singularity of the radial function outside the computational domain. With this configuration, the exact solution is highly regular, $u \in C^{\infty}(\overline{\Omega})$. We test both the singular ($p=1.5$) and degenerate ($p=3.0$) regimes under uniform mesh refinement.

\Cref{Fig:SmoothCases} illustrates the convergence history under uniform mesh refinement for all four configurations. In both dimensions and for both exponents, we observe that the actual error $\|u - u_h\|_{\U_h}$ and the computed estimator $\eta$ converge at the optimal rate of $\mathcal{O}(h)$ (which corresponds to $\mathcal{O}(\text{NDOFs}^{-1/2})$ in 2D and $\mathcal{O}(\text{NDOFs}^{-1/3})$ in 3D) for linear elements. The estimator closely tracks the true error from the earliest refinement steps without the need for artificial stabilization, validating the robustness of the dual-norm-based estimator for both $p < 2$ and $p > 2$ under strong boundary conditions.

In addition, to evaluate the computational cost and scalability of our numerical approach, we track the total number of Newton iterations accumulated over the entire parameter continuation path. \Cref{Tab:NewtonIters_Smooth_Combined} summarizes these counts. The nonlinear solver demonstrates remarkable robustness; notably, in the degenerate regime ($p=3$), the number of iterations remains practically constant despite significant increases in the degrees of freedom across both dimensions.

\begin{figure}[t!]
    \centering
    % Top row: 2D results
    \includegraphics[width=0.36\textwidth]{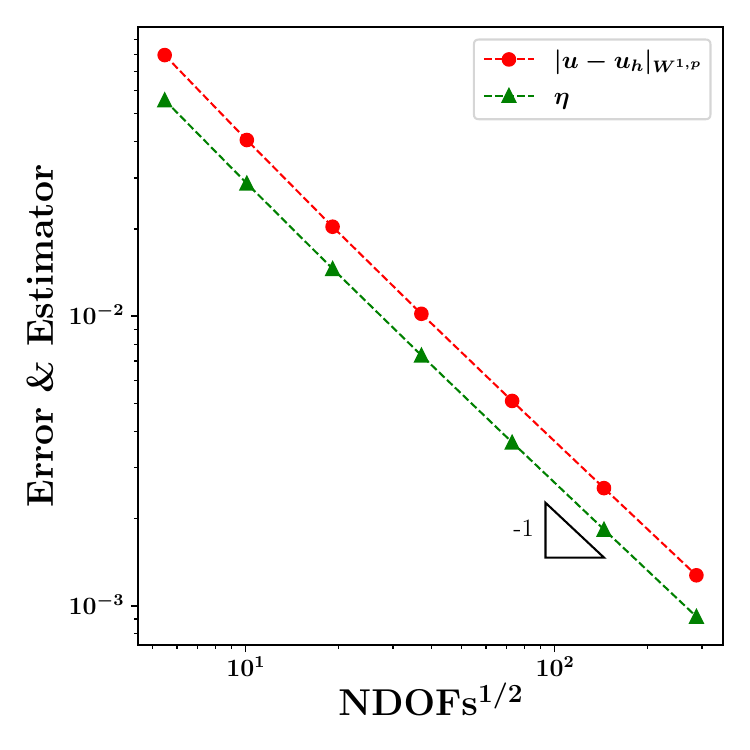}
    \includegraphics[width=0.36\textwidth]{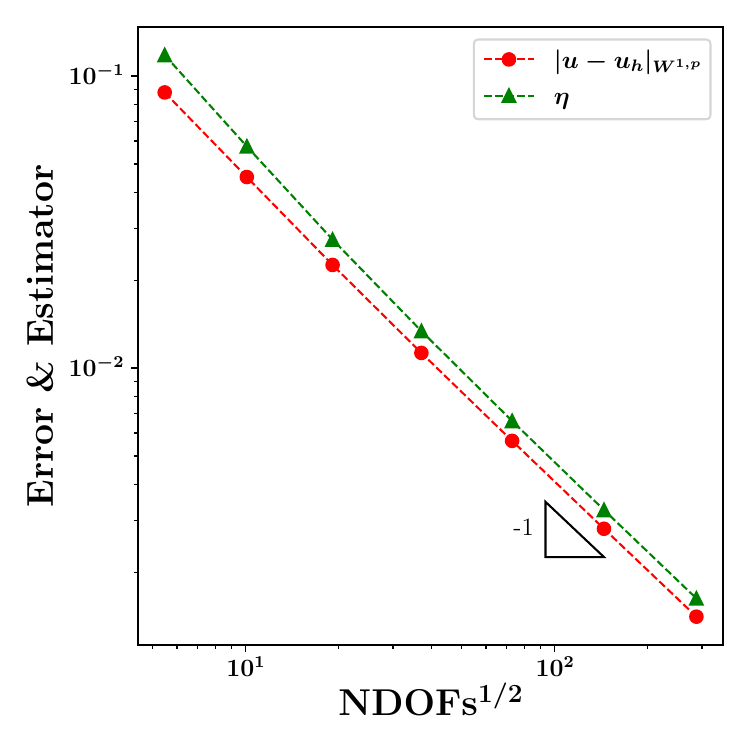} \\[-1ex] % Small vertical adjustment
    % Bottom row: 3D results
    \includegraphics[width=0.36\textwidth]{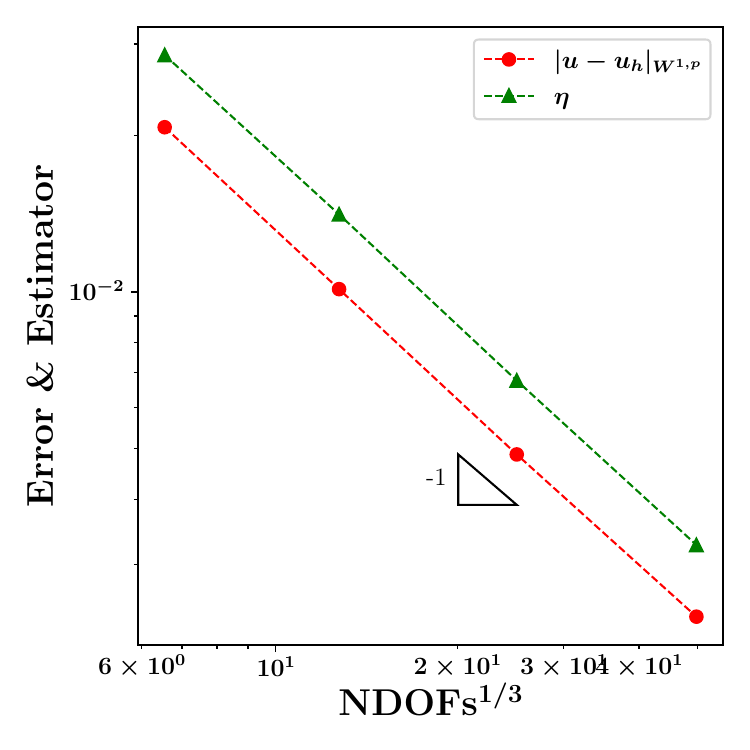}
    \includegraphics[width=0.36\textwidth]{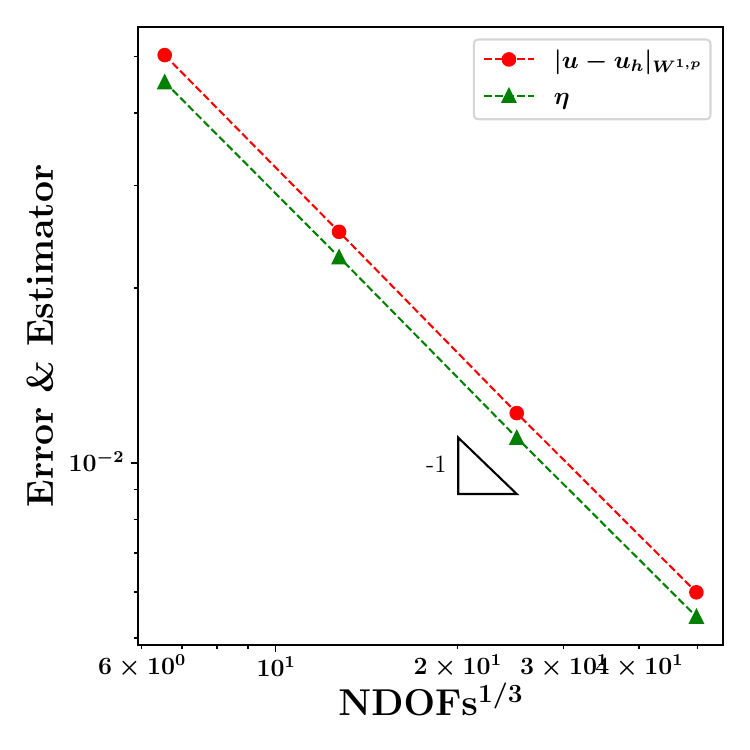}
    \vspace{-2mm}
    \caption{Convergence rates for smooth solutions. Top row: 2D results for $p=1.5$ (left) and $p=3$ (right). Bottom row: 3D results for $p=1.5$ (left) and $p=3$ (right).}
    \label{Fig:SmoothCases}
\end{figure}

\begin{table}[t!]
    \centering
    \small
    \begin{tabular}{c | c c | c | c c}
        \toprule
        \multicolumn{3}{c|}{\textbf{2D Smooth Cases}} & \multicolumn{3}{c}{\textbf{3D Smooth Cases}} \\
        \cmidrule(r){1-3} \cmidrule(l){4-6}
        \textbf{NDOFs} & \textbf{$p=1.5$} & \textbf{$p=3.0$} & \textbf{NDOFs} & \textbf{$p=1.5$} & \textbf{$p=3.0$} \\
        \midrule
        30    & 26  & 55 & 281    & 44  & 57 \\
        102   & 35  & 56 & 2067   & 56  & 58 \\
        366   & 39  & 49 & 15791  & 70  & 60 \\
        1374  & 44  & 48 & 123399 & 117 & 60 \\
        5310  & 63  & 47 & -      & -   & -  \\
        20862 & 79  & 44 & -      & -   & -  \\
        82686 & 159 & 40 & -      & -   & -  \\
        \bottomrule
    \end{tabular}
    \caption{Total accumulated Newton iterations required to reach the target exponent $p$ during the continuation process for smooth solutions under uniform refinement.}
    \label{Tab:NewtonIters_Smooth_Combined}
\end{table}

\smallskip 
\noindent\textbf{Case 2: Singular right-hand side with $p=1.5$.}
We now consider a problem with lower regularity by setting the source term singularity at the corner of the domain ($\boldsymbol{x}_0 = (0, 0)^T$). We set the exponent to $p=1.5$. To fully understand the behavior of our method under these conditions, and specifically how the resolution of the right-hand side affects the error estimator, we analyze three mesh-refinement strategies.

First, we apply the standard uniform refinement starting from the previously described coarse initial mesh. The results are shown in \Cref{Fig:Case2_Rates} (left). In this scenario, the estimator's convergence rate underestimates the true error rate. We attribute this discrepancy to the right-hand side singularity, which is not adequately represented by the standard initial mesh and remains poorly resolved under purely uniform refinement. %\textcolor{orange}{[This is essentially the same comment as at the end of Sect 5.2. Maybe we can drop it from here or from there]}

To verify our hypothesis regarding the impact of the right-hand-side representation, we introduce an initial mesh-refinement  inspired by the pre-adaptation strategies from  \cite{Millar2022Projection}. While that reference proposes starting   with a regularized right-hand side, we leverage only the mesh-adaptation aspect of their procedure to isolate its geometric benefits. This provides us with a pre-adapted initial mesh that more accurately captures the singularity. Starting our main MinRes procedure from this adapted mesh and proceeding with uniform refinement, the estimator's convergence rate successfully realigns with the true error rate, as depicted in \Cref{Fig:Case2_Rates} (center).

To establish a standalone procedure independent of the auxiliary initial MinRes step, we employ the main MinRes method directly with an adaptive mesh refinement strategy driven by our a posteriori error estimator $\eta$, starting from the original coarse mesh. For the adaptive process, we employ Dörfler marking \cite{dorfler_sinum96} with a parameter $\theta = 0.5$. As illustrated in \Cref{Fig:Case2_Rates} (right), the adaptive refinement dynamically resolves the singularity of the right-hand side as the iterative process advances. Consequently, the estimator accurately tracks the actual error rate without requiring a manually pre-adapted mesh. 

\begin{figure}[t!]
    \centering
    \includegraphics[width=0.32\textwidth]{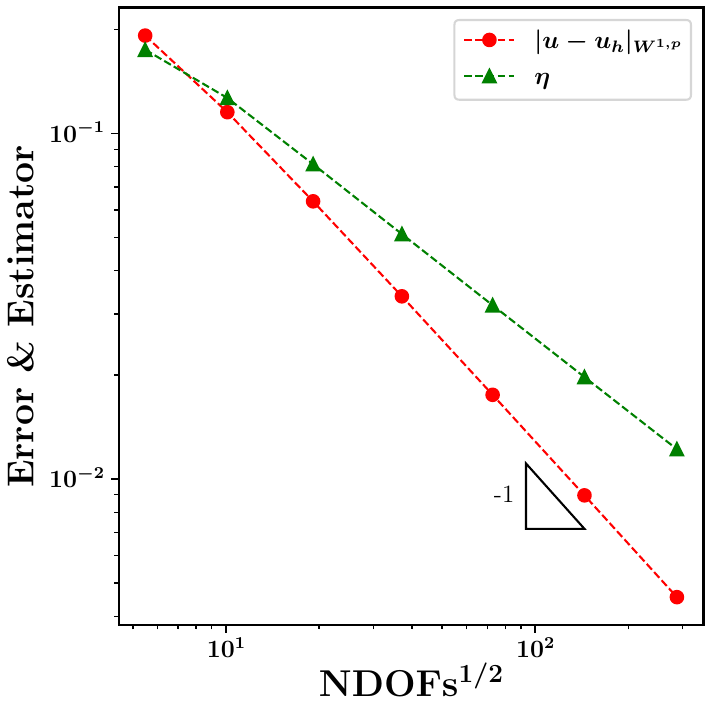}
    \includegraphics[width=0.32\textwidth]{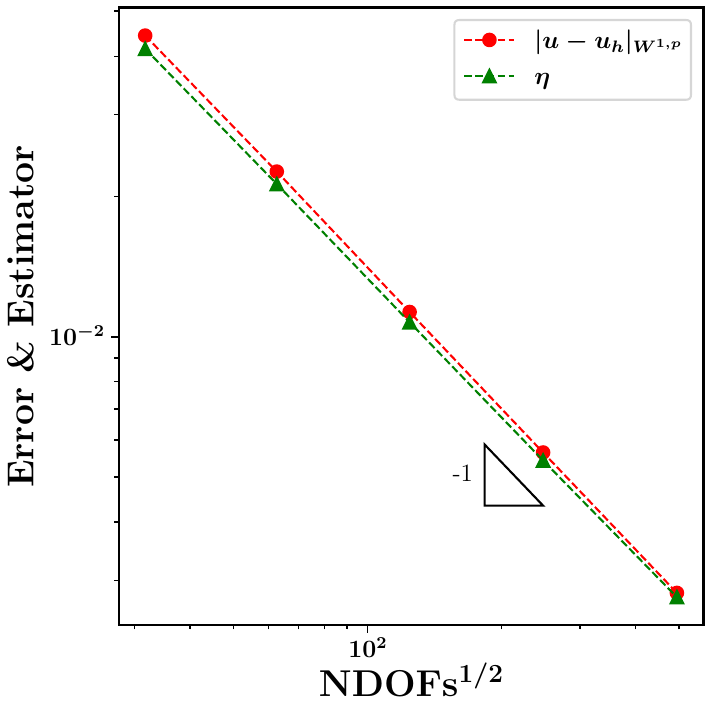}
    \includegraphics[width=0.32\textwidth]{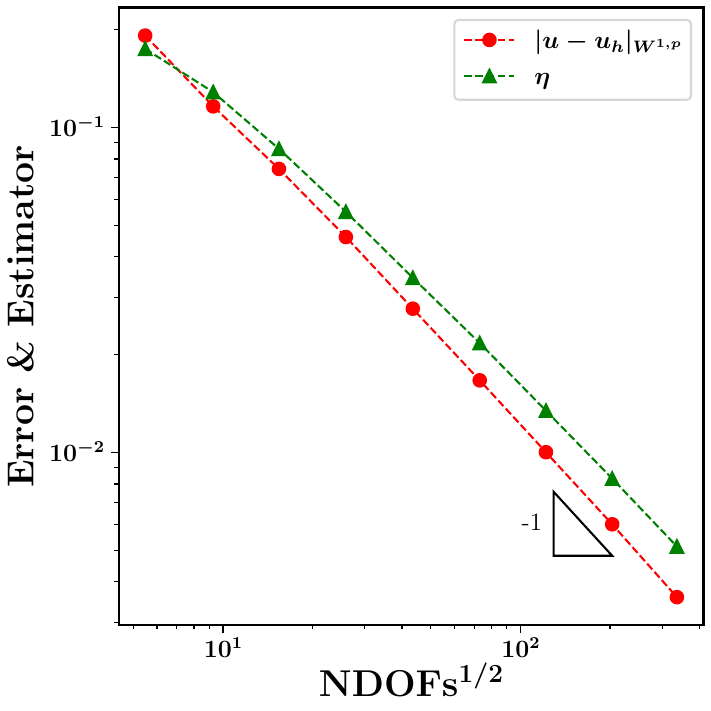}
    \vspace{-2mm}
    \caption{Convergence rates for a singular right-hand side with $p=1.5$ in 2D. Left: Uniform refinement from a standard coarse mesh. Center: Uniform refinement from a pre-adapted initial mesh. Right: Adaptive mesh refinement starting from a standard coarse mesh.}
    \label{Fig:Case2_Rates}
\end{figure}

To visually demonstrate the effectiveness of our a posteriori error estimator in capturing local features, \Cref{Fig:AdaptedMeshes} displays the evolution of the mesh during the adaptive refinement process. The D\"orfler marking strategy strongly localizes the refinement around this singular point, maintaining a coarser resolution in the rest of the domain where the solution is smoother. This targeted adaptivity is precisely what allows the method to recover the previously observed optimal convergence rates.

\begin{figure}[t!]
    \centering
    \includegraphics[width=0.32\textwidth]{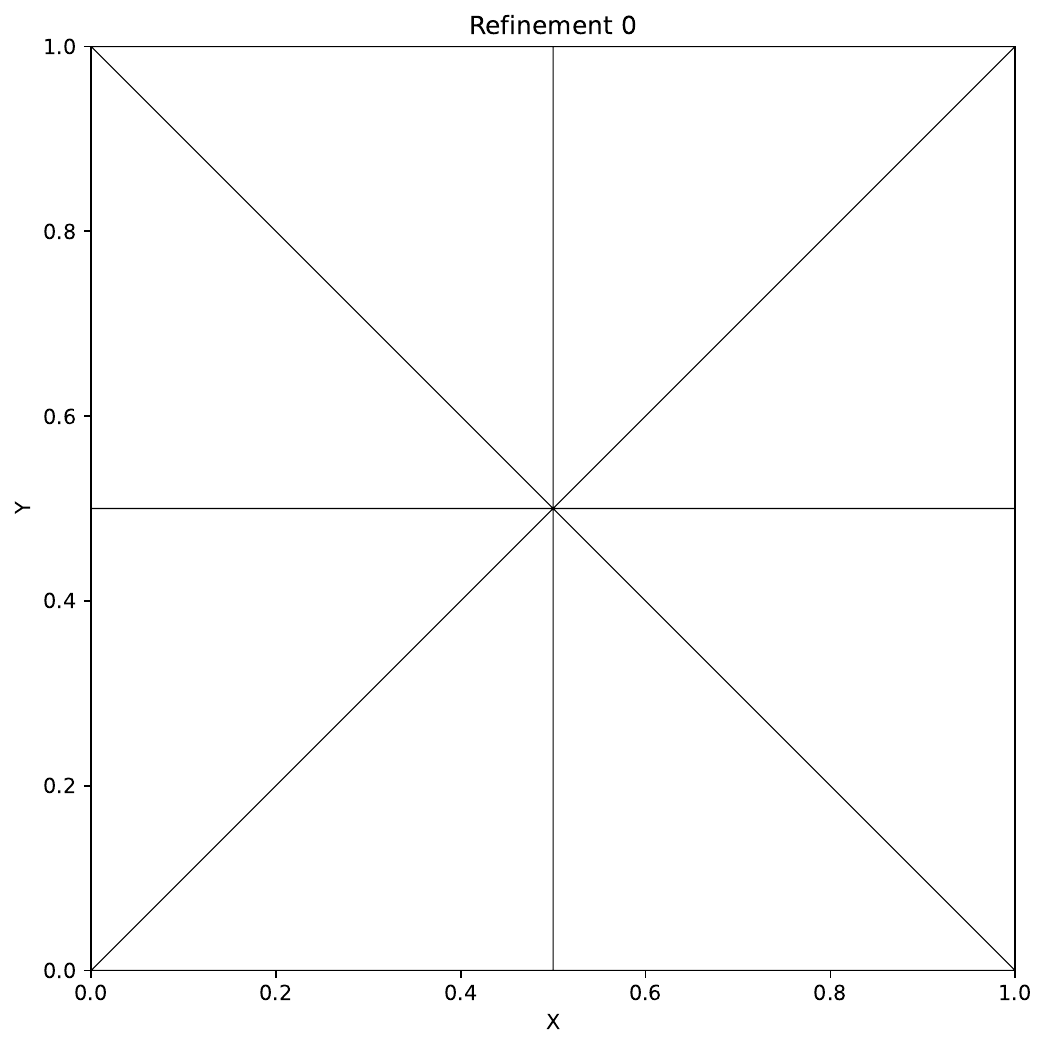}
    \includegraphics[width=0.32\textwidth]{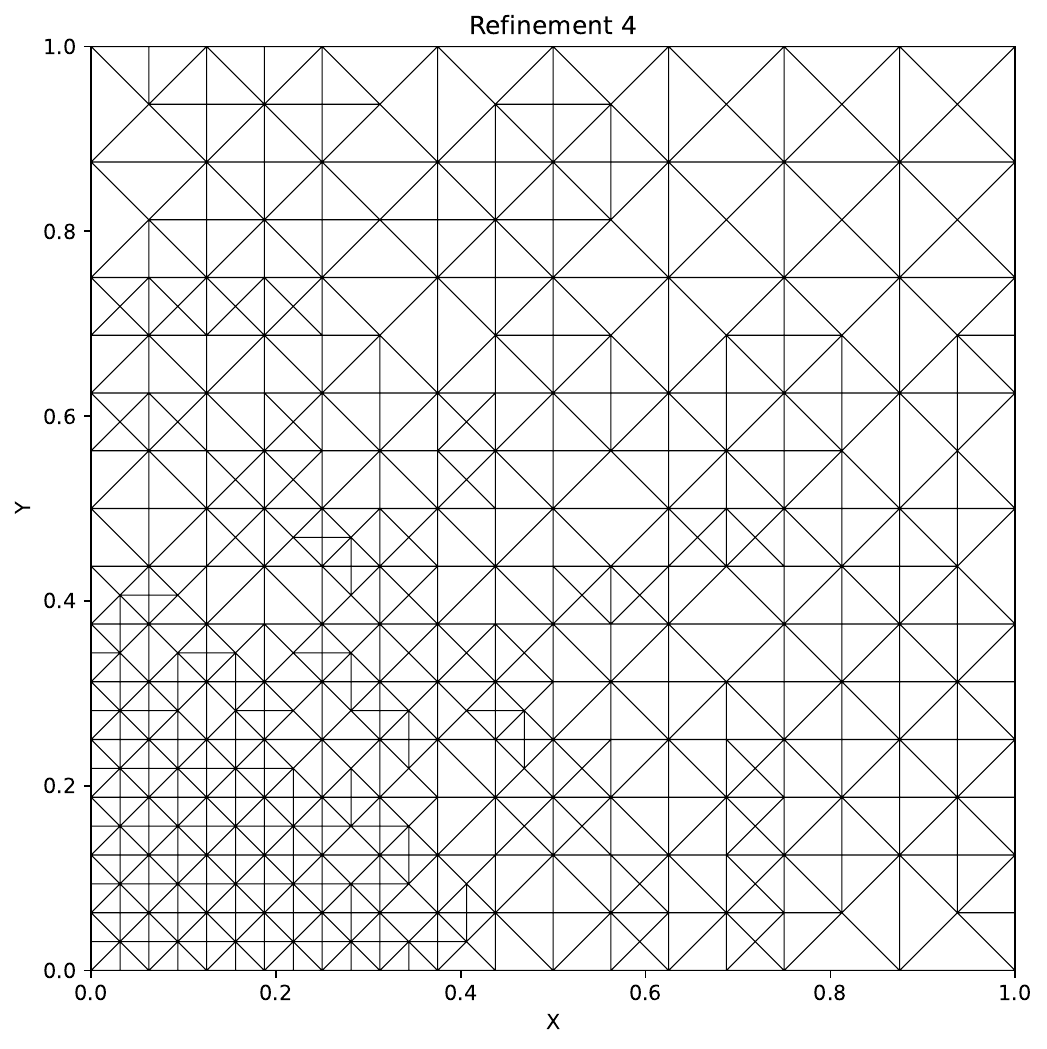}
    \includegraphics[width=0.32\textwidth]{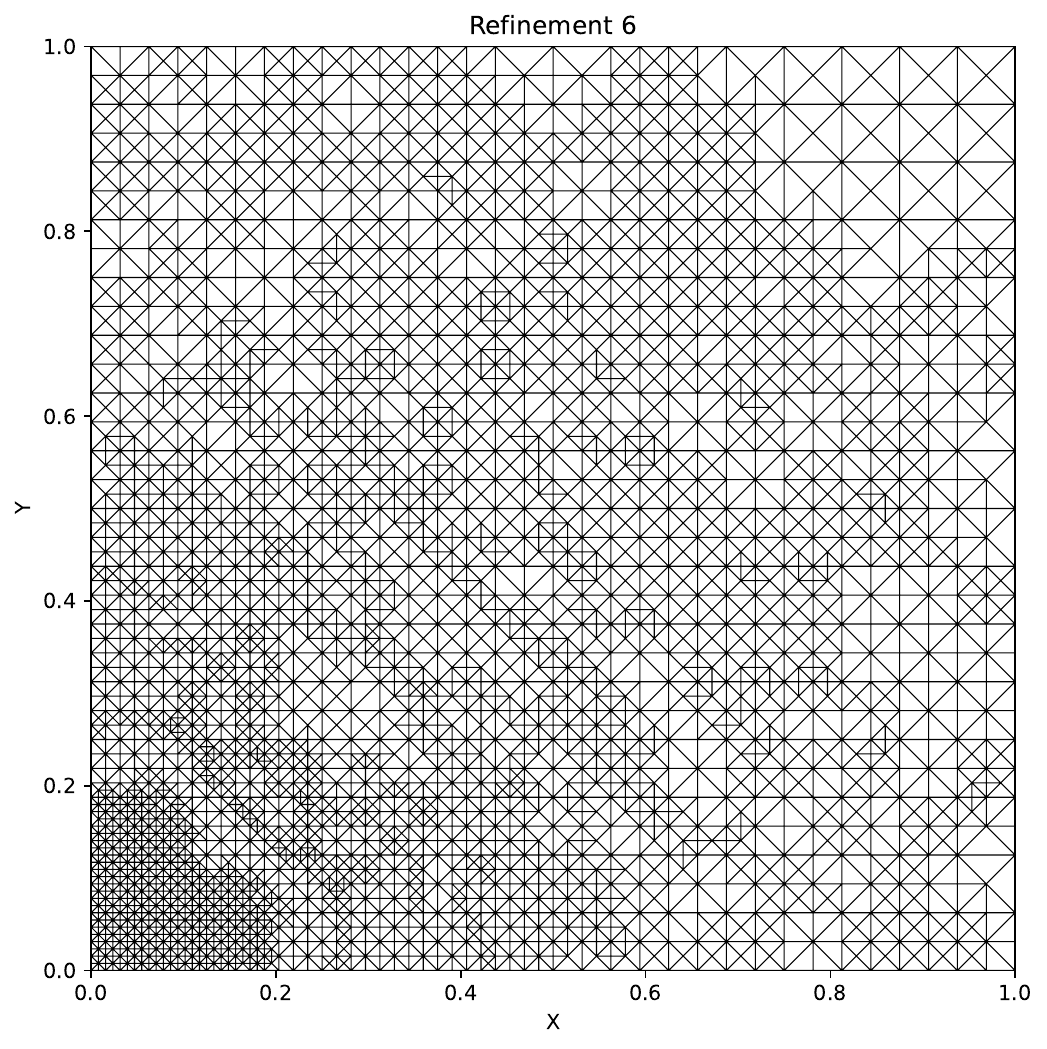}
    \vspace{-2mm}
    \caption{Snapshots of the adaptively refined mesh in 2D for the singular problem ($p=1.5$). From left to right: the initial coarse mesh, an early adapted mesh, and the mesh at the sixth refinement step.}
    \label{Fig:AdaptedMeshes}
\end{figure}

Finally, \Cref{Tab:NewtonIters_Advanced} presents the performance of the nonlinear solver for this singular problem using both the pre-adapted and the fully adaptive mesh strategies. Consistent with the observations in the smooth scenarios, the solver retains its remarkable robustness. The accumulated iteration counts grow very slowly and remain stable, even as the adaptive procedure significantly increases the local resolution and the total number of degrees of freedom.

\begin{table}[t!]
    \centering
    \small
    \begin{tabular}{c c | c c}
        \toprule
       % \multicolumn{4}{c}{{Case 2 (2D singular, $p=1.5$)}} \\
       % \midrule
        \multicolumn{2}{c|}{{Pre-adapted initial mesh}} & \multicolumn{2}{c}{{Adaptive mesh refinement}} \\
        \cmidrule{1-4}
         {NDOFs} &  {Total Newton iterations} &  {NDOFs} & {Total Newton iterations} \\
        \midrule
        1006   & 52  & 30    & 33 \\
        3919   & 80  & 86    & 40 \\
        15460  & 101 & 238   & 50 \\
        61402  & 176 & 671   & 85 \\
        244726 & 160 & 1891  & 88 \\
        -      & -   & 5330  & 83 \\
        -      & -   & 14861 & 92 \\
        -      & -   & 41457 & 134 \\
        -      & -   & 112921 & 176 \\
        \bottomrule
    \end{tabular}
    \caption{Case 2. Total accumulated Newton iterations required to reach the target exponent $p=1.5$ for the singular problem in 2D.}
    \label{Tab:NewtonIters_Advanced}
\end{table}

%%%%%%%%%%%%%%%%%%%%%%%%%%%%%%%%%%%%%%%%%%%%%%%%%%%%%%%%%%%%%%%%%%%%%%%%%%%%%%%%%%%%%%
%%%%%%%%%%%%%%%%%%%%%%%%%%%%%%%%%%%%%%%%%%%%%%%%%%%%%%%%%%%%%%%%%%%%%%%%%%%%%%%%%%%%%%
%%%%%%%%%%%%%%%%%%%%%%%%%%%%%%%%%%%%%%%%%%%%%%%%%%%%%%%%%%%%%%%%%%%%%%%%%%%%%%%%%%%%%%
%%%%%%%%%%%%%%%%%%%%%%%%%%%%%%%%%%%%%%%%%%%%%%%%%%%%%%%%%%%%%%%%%%%%%%%%%%%%%%%%%%%%%%
%%%%%%%%%%%%%%%%%%%%%%%%%%%%%%%%%%%%%%%%%%%%%%%%%%%%%%%%%%%%%%%%%%%%%%%%%%%%%%%%%%%%%%
%%%%%%%%%%%%%%%%%%%%%%%%%%%%%%%%%%%%%%%%%%%%%%%%%%%%%%%%%%%%%%%%%%%%%%%%%%%%%%%%%%%%%%
%%%%%%%%%%%%%%%%%%%%%%%%%%%%%%%%%%%%%%%%%%%%%%%%%%%%%%%%%%%%%%%%%%%%%%%%%%%%%%%%%%%%%%
%%%%%%%%%%%%%%%%%%%%%%%%%%%%%%%%%%%%%%%%%%%%%%%%%%%%%%%%%%%%%%%%%%%%%%%%%%%%%%%%%%%%%%
%%%%%%%%%%%%%%%%%%%%%%%%%%%%%%%%%%%%%%%%%%%%%%%%%%%%%%%%%%%%%%%%%%%%%%%%%%%%%%%%%%%%%%
%%%%%%%%%%%%%%%%%%%%%%%%%%%%%%%%%%%%%%%%%%%%%%%%%%%%%%%%%%%%%%%%%%%%%%%%%%%%%%%%%%%%%%
%%%%%%%%%%%%%%%%%%%%%%%%%%%%%%%%%%%%%%%%%%%%%%%%%%%%%%%%%%%%%%%%%%%%%%%%%%%%%%%%%%%%%%
%%%%%%%%%%%%%%%%%%%%%%%%%%%%%%%%%%%%%%%%%%%%%%%%%%%%%%%%%%%%%%%%%%%%%%%%%%%%%%%%%%%%%%
%%%%%%%%%%%%%%%%%%%%%%%%%%%%%%%%%%%%%%%%%%%%%%%%%%%%%%%%%%%%%%%%%%%%%%%%%%%%%%%%%%%%%%
%%%%%%%%%%%%%%%%%%%%%%%%%%%%%%%%%%%%%%%%%%%%%%%%%%%%%%%%%%%%%%%%%%%%%%%%%%%%%%%%%%%%%%

\section{Concluding remarks}\label{sec:Conclusions}

In this work, we develop and analyze a residual minimization method for solving nonlinear PDEs in Banach spaces, with a particular focus on the $p$-Laplacian problem. By formulating the problem as a minimization of the residual in a dual norm, we derived a mixed formulation that involves the duality map of the test space. This approach naturally extends the framework proposed in \cite{Muga2020Discretization} to nonlinear operators.

From a theoretical standpoint, we established the well-posedness of both the continuous and discrete minimization problems, relying on the strict convexity and reflexivity of the underlying Banach spaces. A key contribution of this study is the derivation of an a posteriori error estimator. We proved that the norm of the residual representative provides both upper and lower bounds for the approximation error, thereby guaranteeing reliability and efficiency. In the discrete setting, using a non-conforming Discontinuous Petrov--Galerkin (DPG) framework with broken Sobolev norms, we showed that the method remains stable and convergent.

The numerical experiments validated our theoretical findings. For smooth solutions, the method exhibits optimal convergence rates for both $p < 2$ and $p \geq 2$. In the presence of singularities, the proposed a posteriori error estimator drives an adaptive mesh refinement strategy. This adaptivity enables the method to recover optimal algebraic convergence rates, significantly outperforming uniform refinement, particularly for degenerate diffusion coefficients ($p=3$).

Apart from the evident benefits of straightforwardly designing continuous a posteriori error estimators, we believe that the mechanisms developed in this work can be used to construct operator preconditioners for linear and nonlinear problems in $L^p$ spaces (see, e.g., \cite{das26robust}).  Moreover, these ideas may be useful for MinRes-based neural network approximation of nonlinear PDEs, in the spirit of \cite{fuhrer2025posteriori}.

%%%%%%%%%%%%%%%%%%%%%%%%%%%%%%%%%%%%%%%%%%%%%%%%%%%%%%%%%%%%%%%%%%%%%%%%%%%%%%%%%%%%%%
%%%%%%%%%%%%%%%%%%%%%%%%%%%%%%%%%%%%%%%%%%%%%%%%%%%%%%%%%%%%%%%%%%%%%%%%%%%%%%%%%%%%%%
%%%%%%%%%%%%%%%%%%%%%%%%%%%%%%%%%%%%%%%%%%%%%%%%%%%%%%%%%%%%%%%%%%%%%%%%%%%%%%%%%%%%%%
%%%%%%%%%%%%%%%%%%%%%%%%%%%%%%%%%%%%%%%%%%%%%%%%%%%%%%%%%%%%%%%%%%%%%%%%%%%%%%%%%%%%%%
%%%%%%%%%%%%%%%%%%%%%%%%%%%%%%%%%%%%%%%%%%%%%%%%%%%%%%%%%%%%%%%%%%%%%%%%%%%%%%%%%%%%%%
%%%%%%%%%%%%%%%%%%%%%%%%%%%%%%%%%%%%%%%%%%%%%%%%%%%%%%%%%%%%%%%%%%%%%%%%%%%%%%%%%%%%%%
%%%%%%%%%%%%%%%%%%%%%%%%%%%%%%%%%%%%%%%%%%%%%%%%%%%%%%%%%%%%%%%%%%%%%%%%%%%%%%%%%%%%%%
%%%%%%%%%%%%%%%%%%%%%%%%%%%%%%%%%%%%%%%%%%%%%%%%%%%%%%%%%%%%%%%%%%%%%%%%%%%%%%%%%%%%%%
%%%%%%%%%%%%%%%%%%%%%%%%%%%%%%%%%%%%%%%%%%%%%%%%%%%%%%%%%%%%%%%%%%%%%%%%%%%%%%%%%%%%%%
%%%%%%%%%%%%%%%%%%%%%%%%%%%%%%%%%%%%%%%%%%%%%%%%%%%%%%%%%%%%%%%%%%%%%%%%%%%%%%%%%%%%%%
%%%%%%%%%%%%%%%%%%%%%%%%%%%%%%%%%%%%%%%%%%%%%%%%%%%%%%%%%%%%%%%%%%%%%%%%%%%%%%%%%%%%%%
%%%%%%%%%%%%%%%%%%%%%%%%%%%%%%%%%%%%%%%%%%%%%%%%%%%%%%%%%%%%%%%%%%%%%%%%%%%%%%%%%%%%%%
%%%%%%%%%%%%%%%%%%%%%%%%%%%%%%%%%%%%%%%%%%%%%%%%%%%%%%%%%%%%%%%%%%%%%%%%%%%%%%%%%%%%%%
%%%%%%%%%%%%%%%%%%%%%%%%%%%%%%%%%%%%%%%%%%%%%%%%%%%%%%%%%%%%%%%%%%%%%%%%%%%%%%%%%%%%%%

\bibliographystyle{siam}  % O usa un estilo como plain, alpha, etc.
\bibliography{References}  % Asegúrate de que coincida el nombre con tu archivo .bib

@book{Bochev2009Least,
  title={Least-squares finite element methods},
  author={Bochev, P. B. and Gunzburger, M.},
  volume={166},
  year={2009},
  publisher={Springer Science \& Business Media}
}

@book{Brenner2008Mathematical,
  title={The mathematical theory of finite element methods},
  author={Brenner, Susanne C and Scott, L Ridgway},
  year={2008},
  publisher={Springer}
}

@book{Ciarlet2013Linear,
   title =     {Linear and Nonlinear Functional Analysis with Applications},
   author =    {Ciarlet, Philippe G},
   publisher = {SIAM-Society for Industrial and Applied Mathematics},
   isbn =      {1611972582; 9781611972580},
   year =      {2013},
   url =       {libgen.li/file.php?md5=44705a4590966cd2d7bc97073033b379}}

@article{Ciarlet2018Family,
  title={A family of Crouzeix--Raviart finite elements in 3D},
  author={Ciarlet Jr, Patrick and Dunkl, Charles F and Sauter, Stefan A},
  journal={Analysis and Applications},
  volume={16},
  number={05},
  pages={649--691},
  year={2018},
  publisher={World Scientific}
}

@book{Cioranescu1990Geometry,
   title =     {Geometry of Banach Spaces, Duality Mappings and Nonlinear Problems},
   author =    {Cioranescu, I.},
   publisher = {Springer},
   isbn =      {9789401074544; 9401074542; 9789400921214; 9400921217},
   year =      {1990},
   series =    {Mathematics and Its Applications : Main Series    №62},
   edition =   {1},
url =       {libgen.li/file.php?md5=e331fb8331507f7ea760a8e2fd641ebb}
}

@article{Crouzeix1973Conforming,
  title={Conforming and nonconforming finite element methods for solving the stationary {S}tokes equations {I}},
  author={Crouzeix, Michel and Raviart, P-A},
  journal={Revue Fran{\c{c}}aise d'Automatique Informatique Recherche Op{\'e}rationnelle. Math{\'e}matique},
  volume={7},
  number={R3},
  pages={33--75},
  year={1973},
  publisher={EDP Sciences}
}

@book{Deimling2013Nonlinear,
  title={Nonlinear functional analysis},
  author={Deimling, Klaus},
  year={2013},
  publisher={Springer Science \& Business Media}
}

@book{Lindqvist2017Notes,
  title={Notes on the p--{L}aplace equation},
  author={Lindqvist, P.},
  number={161},
  year={2017},
  publisher={University of Jyv{\"a}skyl{\"a}}
}

@article{fuhrer2025posteriori,
  title={A posteriori analysis of neural network approximations},
  author={F{\"u}hrer, Thomas and Rojas, Sergio},
  journal={arXiv preprint arXiv:2507.06017},
  year={2025}
}

@article{demkowicz2025discontinuous,
  title={{The discontinuous Petrov--Galerkin method}},
  author={Demkowicz, Leszek and Gopalakrishnan, Jay},
  journal={Acta Numerica},
  volume={34},
  pages={293--384},
  year={2025},
  publisher={Cambridge University Press}
}

@article{Balci2023Relaxed,
  title={Relaxed {K}a{\v{c}}anov Scheme for the $p$-{L}aplacian with Large Exponent},
  author={Balci, Anna Kh and Diening, Lars and Storn, Johannes},
  journal={SIAM Journal on Numerical Analysis},
  volume={61},
  number={6},
  pages={2775--2794},
  year={2023},
  publisher={SIAM}
}

@article{Cantin2018A,
  title={A {DPG} framework for strongly monotone operators},
  author={Cantin, Pierre and Heuer, Norbert},
  journal={SIAM Journal on Numerical Analysis},
  volume={56},
  number={5},
  pages={2731--2750},
  year={2018},
  publisher={SIAM}
}

@article{Carstensen2018Nonlinear,
  title={Nonlinear {D}iscontinuous {P}etrov--{G}alerkin methods},
  author={Carstensen, C. and Bringmann, P. and Hellwig, F. and Wriggers, P.},
  journal={Numerische Mathematik},
  volume={139},
  number={3},
  pages={529--561},
  year={2018},
  id={Carstensen2018},
  isbn={0945-3245},
  doi={https://doi.org/10.1007/s00211-018-0947-5},
  publisher={Springer}
}

@article{Cier2021Automatically,
title = {Automatically adaptive stabilized finite elements and continuation analysis for compaction banding in geomaterials},
author = {Cier, R. J. and Poulet, T. and Rojas, S. and Veveakis, M. and Calo, V. M.},
journal = {International Journal for Numerical Methods in Engineering},
volume = {122},
number = {21},
pages = {6234-6252},
doi = {https://doi.org/10.1002/nme.6790},
url = {https://onlinelibrary.wiley.com/doi/abs/10.1002/nme.6790},
eprint = {https://onlinelibrary.wiley.com/doi/pdf/10.1002/nme.6790},
year = {2021}
}

@article{Cier2021ANonlinear,
title = {A nonlinear weak constraint enforcement method for advection-dominated diffusion problems},
author = {Cier, R. J. and Rojas, S. and Calo, V. M.},
journal = {Mechanics Research Communications},
volume = {112},
pages = {103602},
year = {2021},
note = {Special issue honoring G.I. Taylor Medalist Prof. Arif Masud},
issn = {0093-6413},
doi = {https://doi.org/10.1016/j.mechrescom.2020.103602},
url = {https://www.sciencedirect.com/science/article/pii/S0093641320301300}
}

@article{das26robust,
 author = {Das, Rishi and Hutridurga, Harsha and Pani, Amiya K. and Ruiz-Baier, Ricardo},
 doi = {10.48550/arXiv.2510.24527},
 journal = {SIAM Journal on Numerical Analysis},
 title = {Robust stability and preconditioning of {Darcy--Forchheimer} equations},
 year = {2026},
 pages = {under review}
}

@article{dorfler_sinum96,
author = {D\"orfler, Willy},
title = {A Convergent Adaptive Algorithm for {P}oisson's Equation},
journal = {SIAM {J}ournal on {N}umerical {A}nalysis}, 
volume = {33},
number = {3},
pages = {1106--1124},
year = {1996}}

@article{glowinski1975sur,
  title={Sur l'approximation, par {\'e}l{\'e}ments finis d'ordre un, et la r{\'e}solution, par p{\'e}nalisation-dualit{\'e} d'une classe de probl{\`e}mes de Dirichlet non lin{\'e}aires},
  author={Glowinski, Roland and Marroco, Americo},
  journal={Revue fran{\c{c}}aise d'automatique, informatique, recherche op{\'e}rationnelle. Analyse num{\'e}rique},
  volume={9},
  number={R2},
  pages={41--76},
  year={1975},
  publisher={EDP Sciences}
}

@article{Li2022An,
  title={An {$L^p$-DPG} Method with Application to {2D} Convection-Diffusion Problems},
  author={Li, Jiaqi and Demkowicz, Leszek},
  journal={Computational Methods in Applied Mathematics},
  volume={22},
  number={3},
  pages={649--662},
  year={2022},
  publisher={De Gruyter}
}

@article{Liu1994Quasi,
  title={Quasi-norm error bounds for the finite element approximation of some degenerate quasilinear elliptic equations and variational inequalities},
  author={Liu, W. and Barrett, J. W.},
  journal={ESAIM: Mathematical Modelling and Numerical Analysis},
  volume={28},
  number={6},
  pages={725--744},
  year={1994},
  publisher={EDP Sciences}
}

@article{Liu2001Some,
  title={Some a posteriori error estimators for p--{L}aplacian based on residual estimation or gradient recovery},
  author={Liu, W. and Yan, N.},
  journal={Journal of Scientific Computing},
  volume={16},
  pages={435--477},
  year={2001},
  publisher={Springer}
}

@article{Millar2022Projection,
  title={Projection in negative norms and the regularization of rough linear functionals},
  author={Millar, F. and Muga, I. and Rojas, S. and Van der Zee, K. G.},
  journal={Numerische Mathematik},
  volume={150},
  number={4},
  pages={1087--1121},
  year={2022},
  publisher={Springer}
}

@article{Monsuur2024Minimal,
  title={Minimal residual methods in negative or fractional {S}obolev norms},
  author={Monsuur, Harald and Stevenson, Rob and Storn, Johannes},
  journal={Mathematics of Computation},
  volume={93},
  number={347},
  pages={1027--1052},
  year={2024}
}

@article{Muga2020Discretization,
  title={Discretization of Linear Problems in {B}anach Spaces: Residual Minimization, Nonlinear {P}etrov--{G}alerkin, and Monotone Mixed Methods},
  author={Muga, I. and Van Der Zee, K. G.},
  journal={SIAM Journal on Numerical Analysis},
  volume={58},
  number={6},
  pages={3406--3426},
  year={2020}
}

@article{Munoz2023Multistage,
  title={Multistage {DPG} time-marching scheme for nonlinear problems},
  author={Mu{\~n}oz-Matute, Judit and Demkowicz, Leszek},
  journal={arXiv preprint arXiv:2309.00069},
  year={2023}
}

@article{Rojas2024Robust,
  title={Robust {V}ariational {P}hysics-{I}nformed {N}eural {N}etworks},
  author={Rojas, S. and Maczuga, P. and Mu{\~n}oz-Matute, J. and Pardo, D. and Paszy{\'n}ski, M.},
  journal={Computer Methods in Applied Mechanics and Engineering},
  volume={425},
  pages={116904},
  year={2024},
  publisher={Elsevier}
}

@article{Stern2015Banach,
  author  = {Stern, A.},
  title   = {{Banach} space projections and {Petrov}-{Galerkin} estimates},
  journal = {Numerische Mathematik},
  volume  = {130},
  year    = {2015},
  pages   = {125--133},
}

@article{Storn2024Solving,
  title={Solving Minimal Residual Methods in {$W^{-1, p'}$} with Large Exponents $p$},
  author={Storn, Johannes},
  journal={Journal of Scientific Computing},
  volume={99},
  number={2},
  pages={35},
  year={2024},
  publisher={Springer}
}

\end{document}